\definecolor{gray75}{gray}{0.75}
\newcommand{\sln}{\linespread{1}}
\newcommand*{\email}[1]{\href{mailto:#1}{\nolinkurl{#1}} } 
\titleformat{\chapter}[block]{\LARGE\bfseries\sln}{Chapter \thechapter}{11pt}{\newline\huge\bfseries}
\newtheorem{thm}{Theorem}[section]
\newtheorem{rem}{Remark}[section]
\newtheorem{defn}{Definition}[section]
\newenvironment{proof}{\paragraph{Proof:}}{\hfill$\square$}
\newtheorem{lem}{Lemma}[section]
\newtheorem{proposition}{Proposition}[section]
\newtheorem{corollary}{Corollary}[section]
\begin{document}
\title{Isometric models of the Funk disc and the  Busemann function}
\author{Ashok Kumar\footnote{E-mail: ashok241001@bhu.ac.in ; DST-CIMS, Banaras Hindu University, Varanasi-221005, India}, Hemangi Madhusudan Shah\footnote{E-mail: hemangimshah@hri.res.in ; Harish-Chandra Research Institute, A CI of Homi Bhabha National Institute, Chhatnag Road, Jhunsi, Prayagraj-211019, India.} and Bankteshwar Tiwari\footnote{E-mail: btiwari@bhu.ac.in; DST-CIMS, Banaras Hindu University, Varanasi-221005, India}}
\maketitle
\begin{abstract}
\noindent 
In this article, we find three isometric models of the Funk disc: Finsler upper half of the hyperboloid of two sheets model, the Finsler band model and the Finsler upper hemi sphere model; and we also find two new models of the Finsler-Poincar\'e disc. We  explicitly describe the geodesics in each model. Moreover, we  compute the Busemann function and consequently describe the horocycles  in the Funk and the Hilbert disc. Finally, we prove the asymptotic harmonicity of the Funk  disc. 
We also show that, the concept of asymptotic harmonicity of the Finsler manifolds {\it tacitly} depends on the measure, in {\it contrast} to the
Riemannian case.
\end{abstract}
\section{Introduction}
The Funk and the Hilbert metrics were introduced as supporting examples to solve the {\it Hilbert fourth problem}: \textit{Find all the metrics for which  line segments are geodesics}. The Funk metric on the unit disc is a well-known Finsler metric of constant flag curvature $-\frac{1}{4}$; whereas the Hilbert metric on the unit disc is the arithmetic symmetrization of its Funk metric and is of constant curvature $-1$. In the sequel, we denote by [FF] the Funk unit disc.\\
\textbf{[FF]}: The Funk metric on the unit disc can also be interpreted as Randers metric on the unit disc. It is the deformation of the well known Klein metric on the disc by a closed $1$-form (Subsection \ref{Sec 3.1}).\\

\noindent
Recently, the isometries between the Funk disc [FF], the Finsler-Poincar\'e disc [FP] and the Finsler-upper half plane [FH] has been established in (\cite{AMAK}, $\S 3$). In this article, the new isometric models of the Funk disc and the Finsler-Poincar\'e disc have been introduced. The Lorentzian metric in $\mathbb R^3$ is a well-known non positive definite Riemannian metric. However, its pullback on the upper  half of the hyperboloid of two sheets is a positive definite Riemannian metric, what we call, the hyperbolic metric on the unit disc. In this article, we construct a non positive definite Randers metric $F_L$, (see \eqref{eqn 3.12}), on the upper half space $\mathbb H^3$, whose pullback on the upper  half of the hyperboloid of two sheets is the well known Funk metric on the unit disc. Further, we construct a non positive definite Randers metric $F_+$, (see \eqref{eqn 3.5}), on the upper half space $\mathbb H^3$, whose pullback on the upper  hemi-sphere is the well known Funk metric on the unit disc.
Thus, the  Funk metric in the unit disc can be realized as the pull back of a Randers metric $F_L$, (see \eqref{eqn 3.12}), on the upper half of the hyperboloid of two sheets   [FUH-$1$], as well as  the pull back of a Randers metric $F_+$ on the upper hemi-sphere  [FUS-$2$].
We also show the isometry between the Finsler band model [FB] and the Finsler upper half plane [FU].
As the Funk disc and the Finsler-Poincar\'e  disc are isometric to each other, we further show that the pullback of the Randers metric $F_L$ on the upper  half of the hyperboloid of two sheets [FUH-$2$], as well as the pullback of the Randers metric $F_+$ on the upper  hemi-sphere can be realized as the  models of the Finsler-Poincar\'e  disc, termed as [FUS-$2$]. See the Figure 1  for the overview of all the isometric different models.\\
 We also find the geodesics in these models. In particular, the geodesics of the Band model are shown in  Figure 2. We compute the Busemann function in both the discs. Consequently, we find all the horocycles in these models.

\vspace{.3cm}
\noindent
In Section $2$, we discuss the  preliminaries required for the paper.
In Section $3$,  we introduce and explore  all the  $5$ isometric models of the Funk disc in detail as follows.
 \textbf{[\emph{FUH}-$1$]}: It is  the pullback of  the deformed  Lorentzian metric   on the upper half of the hyperboloid of two sheets ( Subsection $\ref{Sec 3.2}$).\\ 
 \textbf{[\emph{FUH}-$2$]}: It is  the another pullback of the deformed Lorentzian metric   on the upper half of the hyperboloid of two sheets ( Subsection $\ref{Sec 3.4}$).\\
  \textbf{[\emph{FB}]}: It is  the deformation of the Riemannian band model by closed $1$-form. The resulting metric is isometric to the Funk metric ( Subsection $\ref{Sec 3.5}$).\\
\textbf{[\emph{FUS}-$1$]}: It is  the pullback of the  deformation of the hyperbolic metric on $\mathbb{R}^3_+$  on  the upper hemisphere  ( Subsection $\ref{Sec 3.6}$).\\
\textbf{[\emph{FUS}-$2$]}:  It is  the pullback of the  deformation of the hyperbolic metric on $\mathbb{R}^3_+$   on  the upper hemisphere (Subsection $\ref{Sec 3.7}$).

\vspace{.3cm}
\noindent

\vspace{.3cm}
\noindent
We recall the Finsler-Poincar\'e disc   [FP] and the  Finsler-Poincar\'e upper half plane [FU] from  \cite{AMAK}, Theorem $3$.\\
\textbf{[\emph{FP}]}: The Finsler-Poincar\'e metric is a Randers metric on  the unit disc which is the deformation of the Poincar\'e metric by a closed  $1$-form (Subsection \ref{Sec 3.3}).\\
\textbf{[\emph{FU}]}: The Finsler-Poincar\'e upper half plane is a Randers deformation in the upper half plane by the hyperbolic metric in the upper half plane by a closed $1$-form (Subsection \ref{Sec 3.5}). 
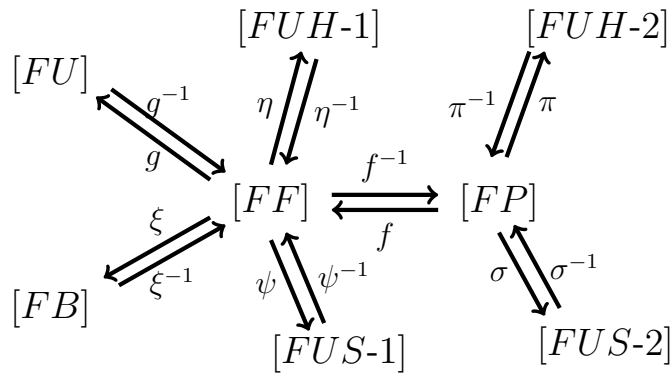
\begin{figure}[ht]
\centering
\begin{tikzpicture}
\node[ scale=1] at (-2,0)(a){\large$[FF]$};
\node[scale=1] at (1,0)(b){\large$[FP]$};
\node[scale=1] at (-1.5,2.3)(c){\large$[FUH$-$1]$};
\node[scale=1] at (-4.9,1.7)(d){\large$[FU]$};
\node[ scale=1] at (-4.9,-1.4)(e){\large$[FB]$};
\node[ scale=1] at (-1.1,-2)(f){\large$[FUS$-$1]$};
\node[ scale=1] at (2.4,-1.9)(g){\large$[FUS$-$2]$};
\node[scale=1] at (2.3,2.3)(h){\large$[FUH$-$2]$};
\draw[->,ultra thick] (-1.2,.1) -- node[above]{$f^{-1}$} (.2,.1);
\draw[->,ultra thick] (.2,-.1) -- node[below]{$f$} (-1.2,-.1);
\draw[->,ultra thick] (-2,.5) -- node[left]{$\eta$} (-1.6,2);
\draw[->,ultra thick] (-1.4,1.9) -- node[right]{$\eta^{-1}$} (-1.8,.5);
\draw[->,ultra thick] (-2.8,.3) -- node[below]{$g$} (-4.3,1.4);
\draw[->,ultra thick] (-4.1,1.5) -- node[above]{$g^{-1}$} (-2.6,.4);
\draw[->,ultra thick] (-2.8,-.2) -- node[above]{$\xi$} (-4.2,-1);
\draw[->,ultra thick] (-4,-1.1) -- node[below]{$\xi^{-1}$} (-2.6,-.3);
\draw[->,ultra thick] (-2,-.5) -- node[left]{$\psi$} (-1.5,-1.7);
\draw[->,ultra thick] (-1.3,-1.6) -- node[right]{$\psi^{-1}$} (-1.8,-.4);
\draw[->,ultra thick] (1.1,.6) -- node[right]{$\pi$} (1.6,2);
\draw[->,ultra thick] (1.4,2) -- node[left]{$\pi^{-1}$} (.9,.6);
\draw[->,ultra thick] (1,-.4) -- node[left]{$\sigma$} (1.6,-1.5);
\draw[<-,ultra thick] (1.2,-.3) -- node[right]{$\sigma^{-1}$} (1.8,-1.4);
\end{tikzpicture}
\caption{Maps in this figure are isometries obtained in Section 3.}
\end{figure}

\vspace{.3cm}
\noindent
In Section $4$, we study the geodesics of all these isometric models. We explicitly obtain the parameterization of the Funk geodesics, which are Klien lines as the point sets.
 We also show that, the geodesics of  [$FP$] are semicircles which intersect orthogonally to the boundary of the unit disc. The geodesics of  [$FU$] are  either vertical lines or semicircles centred at  $x$-axis. The geodesics of the band model are vertical lines, curves asymptotic to $x$-axis and translate of $x$-axis and some slant curves.\\
In Section $5$, we show how to find geodesics of the Hilbert metric using the Funk metric. Consequently we obtain the explicit parametrization of the Klein geodesic.\\
In Section $6$, we find the forward Busemann function for the forward ray, of the Funk disc. And in the Hilbert disc we find the Busemann function for a line. \\
In Section $7$, we show the asymptotic harmonicity of  the Funk disc with respect to the Busemann-Hausdorff volume. 

\section{Preliminaries}
The theory of Finsler manifolds can be considered as a generalization of that of Riemannian manifolds, where the Riemannian metric is replaced by a so called Finsler metric, which is a smoothly varying family of Minkowski norms in each tangent space of the manifold.\\
Let $ M $ be an $n$-dimensional smooth manifold, $T_{x}M$ denotes the tangent space of $M$ at $x$. The tangent bundle $TM$  of $M$ is the disjoint union of tangent spaces: $TM:= \sqcup _{x \in M}T_xM $. We denote the elements of $TM$ by $(x,v)$, where $v\in T_{x}M $ and $TM_0:=TM \setminus\left\lbrace 0\right\rbrace $.
 \begin{defn}[Finsler structure, \cite{SSZ}, \boldmath$\S 1.2$]\label{def 3.A01}
   A Finsler structure on the manifold $M$ is a function $F:TM \to [0,\infty)$ satisfying the following conditions:
 \\(i) $F$ is smooth on $TM_{0}$, 
 \\(ii) $F$ is a positively 1-homogeneous on the fibers of the tangent bundle $TM$,
 \\(iii) The Hessian of $\displaystyle \frac{F^2}{2}$ with elements $\displaystyle g_{ij}=\frac{1}{2}\frac{\partial ^2F^2}{\partial v^i \partial v^j}$ is positive definite on $TM_0$.\\
 The pair $(M,F)$ is called a Finsler space and $g_{ij}$ is called the fundamental tensor of the Finsler structure $F$.
 \end{defn}
 
 \noindent
 Although all Riemannian metrics are examples of Finsler metrics, however Randers metric is the simplest example of a non-Riemannian Finsler metric.
\begin{defn}[Randers Metric, \cite{SSZ}, \boldmath$\S 1.2$]
 Let $\displaystyle \alpha=\sqrt{a_{ij}(x)v^iv^j}$ be a Riemannian metric on the manifold $M$ and $\beta$ be a one-form on the manifold with $||\beta||_{\alpha}<1$, where $\displaystyle ||\beta||_{\alpha} = \sqrt {a^{ij}(x)b_{i}(x)b_{j}(x)}$, then $F(x,v)=\alpha(x,v)+\beta(x,v)$ is called a Randers metric. 
 \end{defn}

 \noindent
 It is well known that there is no canonical volume form on the Finsler manifolds, like in Riemannian case. Some of the well known volume forms on Finsler manifolds are the {\it Busemann-Hausdorff} volume form, the {\it Holmes-Thompson} volume form, the {\it maximum volume} form and the {\it minimum volume} form. A Finsler space with a volume form $d\mu$ is called a Finsler $m$-space and is denoted by $(M,F,d\mu)$.\\
 
 \noindent
 The canonical volume form in a Riemannian manifold $(M^n,\alpha), \alpha=\sqrt{a_{ij}(x)dx^i dx^j}$, is given by 
 $$dV=\sqrt{\det(a_{ij}(x))}\; dx.$$ 
 \begin{defn}[Volume forms in Finsler manifolds, \cite{SZ}, \boldmath$\S 2.2$]
    The {\it Busemann Hausdorff} volume form is defined as:
  $dV_{BH}=\sigma_{BH}(x)  dx$, where
 \begin{equation}
 \sigma_{BH}(x)=\frac{vol(B^n(1))}{vol\left\lbrace (v^i)\in T_xM: F(x,v)<1\right\rbrace}.
 \end{equation}
 The {\it Holmes-Thompson} volume form is defined as $dV_{HT}=\sigma_{HT}(x) dx$,  where 
 \begin{equation}
 \sigma_{HT}(x) =\frac{1}{vol(B^n(1))}\int_{F(x,v)<1}\det(g_{ij}(x,v)) dv.
 \end{equation}
 Here, $B^n(1)$ is the Euclidean unit ball in $\mathbb{R}^n$ and \textit{vol} is the \textit{Euclidean volume}.\\\\
 The {\it maximum} and the {\it minimum} volume form of a Finsler metric $F$ with the fundamental metric tensor $g_{ij}$ is defined as,
 \begin{equation}
 dV_{\max}=\sigma_{\max}(x)\; dx, \;\;  dV_{\min}=\sigma_{\min}(x)\; dx,
 \end{equation}
 where $ \sigma_{\max}(x)=\max\limits_{v \in I_x}   \sqrt{\det(g_{ij}(x,v))}$, 
 $\sigma_{\min}(x)=\min \limits_{v \in I_x} \sqrt{\det(g_{ij}(x,v))},$
and
 $I_x=\left\lbrace v\in T_{x}M :  F(x,v)=1 \right\rbrace $ is the indicatrix at the point $x$ of the Finsler manifold. 
 \end{defn}

\vspace{0.1in}
\noindent
 The well-known volume forms of Randers metric can be computed as given by the following lemma.
 \begin{lem}[\cite{W}, \boldmath$\S 3$ ]\label{lem 3.A1}
 The Busemann-Hausdorff volume form of a Randers metric $F = \alpha + \beta$ is given by,
 \begin{equation}\label{eqn 3.A38}
 dV_{BH} = \left( 1-||\beta||^2_\alpha\right)^{\frac{n+1}{2}} dV_\alpha,
 \end{equation}
 where $dV_\alpha=\sqrt{\det (a_{ij})} dx$.\\
 The Holmes-Thompson volume form of a Randers metric is given by,
 \begin{equation}\label{eqn 3.A39}
 dV_{HT} = dV_\alpha. 
 \end{equation}
 The maximum volume form is given by,
 \begin{equation}\label{eqn 3.A40}
 dV_{max} = \left( 1+||\beta||_\alpha\right)^{n+1} dV_\alpha.
 \end{equation}
 And the minimum volume form is given by,
 \begin{equation}\label{eqn 3.A41}
 dV_{min} = \left( 1-||\beta||_\alpha\right)^{n+1} dV_\alpha.
 \end{equation}
 \end{lem}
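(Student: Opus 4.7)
The plan is to reduce each of the four identities to a pointwise calculation on $T_xM$. Fix $x\in M$ and choose linear coordinates on $T_xM$ so that $a_{ij}(x)=\delta_{ij}$; after an orthogonal rotation we may also assume $\beta(v)=b\,v^1$ with $b:=\|\beta\|_\alpha$. The change from the original chart to this $\alpha$-orthonormal chart has Jacobian $1/\sqrt{\det a_{ij}}$, which is precisely what converts the flat Lebesgue density $dx$ into $dV_\alpha$. Hence it is enough to verify each formula in the flat model $\alpha=|v|$, $\beta=bv^1$ and then multiply through by $\sqrt{\det a_{ij}}$.

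For the Busemann--Hausdorff identity \eqref{eqn 3.A38}, I would identify the indicatrix $\{F<1\}$ with an ellipsoid. On $F=1$ one has $\alpha=1-\beta$, and squaring produces $(1-b^2)(v^1)^2+2bv^1+\sum_{i\ge 2}(v^i)^2=1$. Completing the square exhibits this as an ellipsoid centred at $\bigl(-b/(1-b^2),0,\dots,0\bigr)$ with semi-axis $1/(1-b^2)$ along $v^1$ and $1/\sqrt{1-b^2}$ in every transverse direction, so its Euclidean volume is $\omega_n(1-b^2)^{-(n+1)/2}$. Substituting into the definition of $\sigma_{BH}$ and reinstating the Jacobian $\sqrt{\det a_{ij}}$ yields \eqref{eqn 3.A38}.

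For \eqref{eqn 3.A39}--\eqref{eqn 3.A41} I would invoke the classical Randers determinant identity, which expresses $\det g_{ij}$ as a power of $F/\alpha$ times $\det a_{ij}$; this comes from differentiating $(\alpha+\beta)^2/2$ twice and computing the resulting bordered matrix. Granting it, in polar coordinates $v=ru$ on $T_xM$ the Randers norm factors as $F=r(1+\langle b,u\rangle)$ and $\alpha=r$, so the region $\{F<1\}$ becomes $\{r<(1+\langle b,u\rangle)^{-1}\}$. The radial integration $\int_0^{1/(1+\langle b,u\rangle)} r^{n-1}\,dr$ cancels the polynomial factor in the integrand down to a single linear expression in $u$, leaving $\tfrac{1}{n}\int_{S^{n-1}}(1+\langle b,u\rangle)\,du$. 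The linear term is odd on $S^{n-1}$ and integrates to zero, so only $\tfrac{1}{n}\operatorname{vol}(S^{n-1})=\omega_n$ survives, which proves \eqref{eqn 3.A39} after reinstating $\sqrt{\det a_{ij}}$.

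For $dV_{\max}$ and $dV_{\min}$, on the indicatrix $F=1$ the same determinant identity shows that $\sqrt{\det g_{ij}}$ is a decreasing function of $\alpha$ alone. The constraint $|\beta|\le b\alpha$ combined with $\alpha=1-\beta$ forces $\alpha\in[(1+b)^{-1},(1-b)^{-1}]$, with the two endpoints attained on the vectors parallel and antiparallel to the $\alpha$-dual of $\beta$. Plugging these extremal values of $\alpha$ into the determinant identity gives \eqref{eqn 3.A40} and \eqref{eqn 3.A41}. The only substantive ingredient in the whole argument is the Randers determinant identity; once it is in hand, every remaining step is a direct evaluation, with the principal subtlety being the recognition of $\{F<1\}$ as an affine ellipsoid in the orthonormal model.
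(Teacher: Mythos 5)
The paper itself offers no proof of this lemma: it is imported verbatim from Wu's article [W], so there is no in-house argument to compare against. Your strategy --- pass to an $\alpha$-orthonormal frame in each tangent space, identify the $F$-unit ball as an affine ellipsoid for the Busemann--Hausdorff case, and run the remaining three formulas through the Randers determinant identity $\det(g_{ij})=\bigl(F/\alpha\bigr)^{n+1}\det(a_{ij})$ --- is the standard route and is essentially the one taken in the cited source. Your Busemann--Hausdorff and Holmes--Thompson computations are correct: the ellipsoid has the centre and semi-axes you describe, so its Euclidean volume is $(1-b^2)^{-(n+1)/2}$ times that of the unit ball, and the radial integration in the Holmes--Thompson case does collapse to $\tfrac{1}{n}\int_{S^{n-1}}(1+\langle b,u\rangle)\,du$, whose odd part vanishes.

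There is, however, a genuine gap in the last step. On the indicatrix the determinant identity gives $\sqrt{\det(g_{ij})}=\alpha^{-(n+1)/2}\sqrt{\det(a_{ij})}$, and with $\alpha$ ranging over $[(1+b)^{-1},(1-b)^{-1}]$ your extremization yields
\[
\sigma_{\max}=(1+b)^{\frac{n+1}{2}}\sqrt{\det(a_{ij})},\qquad \sigma_{\min}=(1-b)^{\frac{n+1}{2}}\sqrt{\det(a_{ij})},
\]
that is, $dV_{\max}=(1+\|\beta\|_\alpha)^{(n+1)/2}\,dV_\alpha$ and $dV_{\min}=(1-\|\beta\|_\alpha)^{(n+1)/2}\,dV_\alpha$: the exponent produced by your argument is $(n+1)/2$, not the $n+1$ asserted in \eqref{eqn 3.A40} and \eqref{eqn 3.A41}. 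So the sentence ``plugging these extremal values of $\alpha$ into the determinant identity gives \eqref{eqn 3.A40} and \eqref{eqn 3.A41}'' does not follow from what precedes it. Be aware that the target formulas are themselves inconsistent with the paper's definition of $\sigma_{\max}$ and $\sigma_{\min}$ (which uses $\sqrt{\det(g_{ij})}$, not $\det(g_{ij})$): taking $n=2$, $\alpha=|v|$, $\beta=bv^1$ and the indicatrix direction $v=(1,0)/(1+b)$ gives $\det(g_{ij})=(1+b)^3$ and hence $\max_{v\in I_x}\sqrt{\det(g_{ij})}=(1+b)^{3/2}$, not $(1+b)^{3}$. You should either prove the $(n+1)/2$ version and explicitly flag the mismatch with the quoted statement, or identify the (different) definition of the extremal volume forms under which the exponent $n+1$ is correct; as written, the max/min half of your proof asserts a conclusion that your own computation contradicts.
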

 
 \vspace{0.2in}
 
 \noindent
 In the Finslerian case the Hessian can not  be defined uniquely, in contrast to Riemannian case. We first need to define the gradient of a  $C^k$  $(k\geq 1)$ function on $(M,F)$.
 
 \begin{defn}[\bf{Gradient}, \cite{SZ}, \boldmath$\S 14.1$] \label{def 3.A1}
 Let $(M,F,d\mu)$ be a Finsler $m$-space,
 and $f$ be a  $C^k$  $(k\geq 1)$ function on $(M,F)$. Then the {\it gradient} $\nabla f$ is $C^{k-1}$  on $ {\cal{U}}_f:=  \left\lbrace x\in M : df_x \neq 0 \right\rbrace$ and $C^0$ on $M\setminus {\cal{U}}_f$ . Then for 
 $x \in {\cal{U}}_f$,
 \begin{equation*}\label{eqn 3.A50}
 \nabla f(x) := A^i(x, df_x) \frac{\partial}{\partial x^i},
 \end{equation*}
 where $A^i(\eta)$ are given in a standard local coordinate system $(x^i,\eta_i)$ in $T^*M$  by
  \begin{equation*}\label{eqn 3.A51}
  A^i(x, \eta)=\frac{1}{2}\frac{\partial [F^{*2}]}{\partial \eta_i}(x, \eta)=g^{*ij}(x, \eta)\eta_j, \qquad \eta \neq 0,
  \end{equation*} 
 where $F^* _x: T^*_xM \rightarrow \mathbb{R}$ is defined as $F^*_x(\xi)=\sup\limits_{F_x(v) = 1}\xi(v),  v\in T_xM$.  $F^*$ is the Finsler metric dual to $F$.
 \end{defn}
 
 
   
  \begin{defn}
      [\bf{Distance function}, \cite{SZ}, \boldmath$\S 3.2.5$  ] \label{def 3.A30}
      A locally Lipschitz function $f$ on a Finsler space $(M, F)$ is called a distance function, if 
      \begin{equation*}
          F^*(x, df_x)=1=F(x, \nabla f(x)),
      \end{equation*}
      holds almost everywhere. 
  \end{defn}

\begin{defn} 
[{\bf{Laplacian}}, \cite{SZ}, \boldmath$\S 14.1$] \label{def 3.A3}
Let $(M,F,d\mu)$ be a Finsler $m$-space
 and $f$ be a  $C^k$  $(k\geq 2)$ function on $(M,F)$. Then 
 ${div}(\nabla f)$ is a $C^{k-2}$ function on ${\cal{U}}_f$.
  Define {\it Laplacian} of $f$ as:
 \begin{equation*}\label{eqn 3.A49}
 \Delta f(x) := \mbox{div}(\nabla f(x)),\;\; x \in {\cal{U}}_f.
 \end{equation*}
 The Laplacian of $f$ is locally expressed as,
  \begin{equation}\label{eqn 3.A52}
  \Delta_{\mu } f(x)=\frac{1}{\sigma_{\mu }(x)}\frac{\partial}{\partial x^i}\left( \sigma_{\mu }(x) g^{*ij}(x, df_x)\frac{\partial f(x)}{\partial x^j}\right),
  \end{equation}
  where $\sigma_{\mu }(x)$  is the volume density of the volume form  $d\mu$.
 \end{defn}
 \begin{rem}
    The set  ${\cal{U}}_f$ is open in $M$ and $ \mu (M\setminus  {\cal{U}}_f) = 0$. 
 \end{rem}
\begin{lem}[ \cite{YZ}, Lemma 5.1]\label{lm A 2.10}
Let $(M, F, d\mu)$ be an $n$-dimensional Finsler manifold with
volume form $d\mu$ and $f$ be a differentiable function in $M$.  Then on ${\cal{U}}_f $ we have,
\begin{equation*}
    \Delta_\mu f=tr_{g_{\nabla f}} H(f)-S_\mu(\nabla f),
\end{equation*}
 where $S_\mu$, is the $S$-curvature of measure $\mu$.
\end{lem}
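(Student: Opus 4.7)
The plan is to prove the identity by unpacking the local coordinate formula \eqref{eqn 3.A52} and matching it term by term with the coordinate expressions for $\text{tr}_{g_{\nabla f}}H(f)$ and $S_\mu(\nabla f)$. The first step is to switch from the dual fundamental tensor to the primal one via the Legendre transform: because $\nabla f(x)$ is the Legendre dual of $df_x$ on ${\cal U}_f$ (immediate from Definition \ref{def 3.A1}), one has $g^{*ij}(x, df_x) = g^{ij}(x, \nabla f(x))$, so
\begin{equation*}
\Delta_\mu f(x) = \frac{1}{\sigma_\mu(x)}\frac{\partial}{\partial x^i}\Bigl(\sigma_\mu(x)\, g^{ij}(x,\nabla f)\,\partial_j f\Bigr).
\end{equation*}

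I would then apply the product and chain rules, keeping careful track of the fact that $g^{ij}(x,\nabla f(x))$ depends on $x$ both explicitly and through $\nabla f(x)$. This splits the right-hand side into four pieces: a pure second-order piece $g^{ij}(x,\nabla f)\partial_i\partial_j f$; a \emph{horizontal} piece $\partial_{x^i}g^{ij}(x,\nabla f)\,\partial_j f$; a \emph{fiber} piece $\partial_{y^k}g^{ij}(x,\nabla f)\,\partial_i(\nabla f)^k\,\partial_j f$; and a \emph{measure} piece $g^{ij}(x,\nabla f)\,\partial_j f \, \partial_i\ln\sigma_\mu$. In parallel, using the Chern connection with reference vector $V := \nabla f$, the Hessian has local expression $H(f)_{ij} = \partial_i\partial_j f - \Gamma^k_{ij}(x,V)\partial_k f$, so that $\text{tr}_{g_V}H(f) = g^{ij}(x,V)\partial_i\partial_j f - g^{ij}(x,V)\Gamma^k_{ij}(x,V)\partial_k f$. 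The second-order pieces cancel on subtraction, leaving a first-order expression linear in $\partial_j f$.

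The final step is to identify the resulting coefficient of $\partial_j f$ with $-S_\mu(V)$. For this I would invoke the standard coordinate formula $S_\mu(x,y) = \partial_{y^i}G^i(x,y) - y^i\partial_{x^i}\ln\sigma_\mu(x)$ together with the Chern-connection identities $N^i_j = \partial_{y^j}G^i$, $y^j\Gamma^k_{ij}(x,y) = N^k_i(x,y)$, and the Chern \emph{almost-metric compatibility} $\delta_k g_{ij} = -2 C_{ijl}N^l_k$, where $\delta_k := \partial_{x^k} - N^l_k\partial_{y^l}$ is the horizontal derivative and $C_{ijl}$ is the Cartan tensor. The Cartan-tensor corrections arising from the fiber derivatives of $g^{ij}$ are precisely what cancel the non-Riemannian part of $g^{ij}(x,V)\Gamma^k_{ij}(x,V)\partial_k f$, after which the remaining coefficient collapses to $-\bigl(\partial_{y^i}G^i(x,V) - V^i\partial_{x^i}\ln\sigma_\mu(x)\bigr) = -S_\mu(V)$.

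I expect this last bookkeeping step to be the main obstacle: one has to re-express the \emph{direct} and \emph{chain-rule} $x$-derivatives of $g^{ij}(x, \nabla f(x))$ consistently in terms of the horizontal derivatives $\delta_k$, so that the Cartan-tensor terms gather into exactly the pattern appearing in the Chern symbols. Done carefully on ${\cal U}_f$ (where $\nabla f$ is smooth by Definition \ref{def 3.A1}), the identification then becomes a routine but delicate algebraic exercise.
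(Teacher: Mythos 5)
The paper does not actually prove this statement: Lemma \ref{lm A 2.10} is quoted verbatim from [YZ, Lemma 5.1] (Shen--Shen) and used as a black box, so there is no in-paper argument to compare yours against. Your sketch is the standard computational proof of that cited result, and its overall architecture is sound: the Legendre identity $g^{*ij}(x,df_x)=g^{ij}(x,\nabla f(x))$, the four-way expansion of $\frac{1}{\sigma_\mu}\partial_i(\sigma_\mu g^{ij}\partial_j f)$, the Chern-connection Hessian $H(f)_{ij}=\partial_i\partial_jf-\Gamma^k_{ij}(x,V)\partial_kf$, and the S-curvature formula $S_\mu=\partial_{y^i}G^i-y^i\partial_{x^i}\ln\sigma_\mu$ are exactly the right ingredients, and the measure piece matches $V^i\partial_i\ln\sigma_\mu$ immediately since $g^{ij}(x,V)\partial_jf=V^i$.

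Two corrections to the final bookkeeping step, which you rightly flag as the delicate part. First, the identity you call almost-metric compatibility is misstated: for the Chern connection one has $\partial_{y^k}g_{ij}=2C_{ijk}$ and $\delta_kg_{ij}=\Gamma^l_{ik}g_{lj}+\Gamma^l_{jk}g_{il}$, equivalently $\partial_{x^k}g_{ij}=\delta_kg_{ij}+2C_{ijl}N^l_k$; the relation $\delta_kg_{ij}=-2C_{ijl}N^l_k$ is false in general and would derail the computation if used. Second, the cancellation is considerably simpler than your description suggests: every term in the first-order residue is contracted against $\partial_jf=g_{jk}(x,V)V^k$, so the total symmetry of the Cartan tensor together with $C_{ijk}(x,V)V^k=0$ annihilates the entire fiber-derivative piece $\partial_{y^l}g^{ij}(x,V)\,\partial_i V^l\,\partial_jf$ outright, rather than requiring it to cancel against the non-Riemannian part of $g^{ij}\Gamma^k_{ij}\partial_kf$. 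What remains is $-g^{ia}(\partial_{x^i}g_{ab})(x,V)V^b+g^{ij}\Gamma^k_{ij}(x,V)g_{kl}V^l$, which the corrected compatibility identity collapses to $-\Gamma^i_{ib}(x,V)V^b=-\partial_{y^i}G^i(x,V)$, completing the identification with $-S_\mu(\nabla f)$.
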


\vspace{.3cm}
\noindent
In what follows, we will be dealing with the
Funk and the Hilbert metric on the unit disc. 
 We first define the Funk and the Hilbert metric on a strongly convex domain $\Omega$ in $\mathbb{R}^n$.

\vspace{.3cm}
\noindent
Let $\Omega$ be a non empty strongly  convex domain in $\mathbb{R}^n$ and let  $\partial \Omega=\bar{\Omega} \setminus \Omega$ denote the  boundary of $\Omega$. For any two points $x_1, x_2; x_1 \neq x_2$ in  $\Omega$, $\overrightarrow{x_1x_2}$  denotes the  ray starting at $x_1$ and passing through $x_2$ and $\overleftarrow{x_1x_2}$ denotes the  ray starting at $x_2$ and passing through $x_1$.\\

\noindent
In the sequel, $|.|$ denotes the usual Euclidean norm in $\mathbb{R}^n$. 
 \begin{defn}
 {(\bf{Funk metric}, \cite{HHG}, Chapter 2, \boldmath$\S 2$)} The {\it Funk} metric on a strongly convex domain $\Omega$ is denoted by $d_{F,\Omega}$ and is defined as:  
 \\ For any $x_1$ and $x_2$ in $\Omega$ and  $a=\overrightarrow{x_1x_2}\cap \partial \Omega$,
 $$d_{F,\Omega}(x_1,x_2)= \log\left( \frac{|x_1-a|}{|x_2-a|}\right). $$   
 \end{defn}

 \vspace{.3cm}
 \noindent
 Note that the Funk metric is actually a weak metric in the sense that it is  not symmetric.
  It turns out that the Funk metric is actually an example of a Finsler metric. It can be shown that 
  the Funk distance $d_{F,\Omega}$ is realized by the smooth Finsler structures  $F_F$ on $ T\Omega$,
  where $ \Omega$ is a \textit{strongly convex}  domain in $\mathbb{R}^n$ with smooth boundary $\partial \Omega$ ( \cite{ohta}, $\S 1$). Thus the Finsler structure $F_F$ on $\Omega$ is given by, 
 \begin{equation}{\label{eqn 3.200}}
 F_F(x,v)=\frac{|v|}{|x-a|}, 
 \end{equation}
 where $(x,v) \in T\Omega$.

\vspace{.3cm}
\noindent
 
 \begin{defn} [\textbf{Hilbert metric}, \cite{HHG}, Chapter 3, \boldmath$\S 4$]
 The {\it Hilbert} metric on a proper convex domain $\Omega$,  denoted by $d_{H,\Omega}$,  is defined as: \\ 
 For any $x_1$ and $x_2$ in $\Omega$, let $a=\overrightarrow{x_1x_2}\cap \partial \Omega$ and $b=\overleftarrow{x_1x_2}\cap \partial \Omega$. Then
 \begin{equation}
 d_{H,\Omega}(x_1,x_2)=\frac{1}{2} \ln \left( \frac{|x_2-b|.|x_1-a|}{|x_1-b|.|x_2-a|}\right).
 \end{equation}
 \end{defn}
 It can be easily shown that, $d_{H,\Omega}$  is the distance function on $\Omega$ and satisfies the interesting property that, the line segments between any two points are minimizing. In the particular case, where $\Omega$ is the unit ball, $d_{H,\Omega}$ coincides with the  Klein metric of the hyperbolic space.
If $\partial \Omega$  is smooth and $\Omega$ is \textit{strongly convex}, then $d_H$ is realized by the smooth Finsler metric on $\Omega$, called as the Hilbert metric and is given by:
\begin{equation*}
F_H(x,v)=\frac{|v|}{2}\left\lbrace \frac{1}{|x-b|}+\frac{1}{|x-a|}\right\rbrace,
\end{equation*}
 where $(x,v) \in T\Omega$. Clearly, we have $2F_H(x,v)=F_F(x,v)+F_F(x,-v)$ (\cite{ohta}, $\S 1$).\\
 
 \noindent
The Funk metric on the unit disc in $\mathbb{R}^n$ is a special type of Randers metric of constant flag curvature, whose geodesics can be easily 
described, by the general result. 
 
\begin{thm}\label{lem 3.1}(\cite{DSSZ}, $\S 11.3$, \cite{SSZ}, $\S 3.4.8$)
  If  $F=\alpha+\beta$  is a Randers metric on a manifold $M$ with $\beta$ a closed $1$-form, then the Finslerian geodesics have the same trajectories as the geodesics of the underlying Riemannian metric $\alpha$. Moreover, if  $(M, \alpha)$ has constant curvature, then $(M, F)$ is locally projectively flat and consequently, in this case 
 $(M, F)$  is  projectively equivalent to  $(M, \alpha)$. 
 \end{thm}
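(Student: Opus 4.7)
The plan is to analyze the geodesic spray of $F=\alpha+\beta$ and compare it with that of $\alpha$. Recall that the geodesic coefficients $G^i$ of a Finsler metric determine geodesics via the ODE $\ddot{x}^i+2G^i(x,\dot{x})=0$. For a Randers metric one has a well known decomposition
\begin{equation*}
G^i=\bar{G}^i+P\,y^i+Q^i,
\end{equation*}
where $\bar{G}^i$ are the spray coefficients of $\alpha$, $P=P(x,y)$ is a scalar, and $Q^i=Q^i(x,y)$ encodes the alternating part $s_{ij}:=\tfrac{1}{2}(b_{i|j}-b_{j|i})$ of the Levi-Civita covariant derivative of $\beta=b_i\,dx^i$ via a term proportional to $s^i{}_j\,y^j$.

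First I would write down explicitly, or quote from \cite{SSZ}, the precise form of $Q^i$ as a linear combination of $s^i{}_j y^j$. If $\beta$ is closed, then since the Levi-Civita connection is torsion-free one has $b_{i|j}-b_{j|i}=(d\beta)_{ij}=0$, hence $s_{ij}\equiv 0$ and $Q^i\equiv 0$. Thus $G^i-\bar{G}^i=Py^i$ is parallel to the velocity, which is the classical criterion for two sprays to share unparameterized integral curves: an $F$-geodesic $\gamma(t)$ satisfies $\ddot{\gamma}^i+2\bar{G}^i(\dot{\gamma})=-2P\,\dot{\gamma}^i$, and a reparametrization $s=s(t)$ with $\ddot{s}/\dot{s}^2=2P$ converts this into the $\alpha$-geodesic equation. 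Hence the trajectory of $\gamma$ is an $\alpha$-geodesic, and the first assertion is proved.

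For the second part I would invoke Beltrami's theorem: a Riemannian manifold is locally projectively flat (there exist local charts in which its geodesics are affine lines) if and only if it has constant sectional curvature. Since $(M,\alpha)$ is assumed to have constant curvature, such charts exist around each point. Combining this with the first part, the $F$-geodesics, having the same trajectories as $\alpha$-geodesics, are straight lines in the very same coordinates, so $(M,F)$ is locally projectively flat. Projective equivalence of $(M,F)$ with $(M,\alpha)$ is then immediate from the definition: two metrics are projectively equivalent when their geodesics coincide as point sets, which is exactly what the first part establishes.

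The main obstacle is the first step, namely verifying the decomposition of $G^i$ and the vanishing of $Q^i$ under the closedness hypothesis; this involves the fundamental tensor $g_{ij}$ of a Randers metric and its Christoffel symbols, and although completely mechanical it is somewhat lengthy. I would handle it by citing the Randers spray formula directly from \cite{SSZ}, §3.4.8, rather than recomputing it, since once the structural formula is in hand the closedness of $\beta$ immediately kills the trajectory-deforming term and the remaining assertions follow with no further effort.
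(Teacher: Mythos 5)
The paper does not prove this statement at all: it is quoted verbatim from the cited references (\cite{DSSZ}, $\S 11.3$ and \cite{SSZ}, $\S 3.4.8$) and used as a black box throughout Section 3. Your argument reconstructs the standard proof from those references and is correct: the Randers spray decomposition $G^i=\bar{G}^i+Py^i+\alpha\, s^i{}_{0}$ with $s_{ij}=\tfrac12(b_{i|j}-b_{j|i})$ reduces to $G^i=\bar{G}^i+Py^i$ when $d\beta=0$, which is precisely the condition for the two sprays to be projectively related, and the second assertion then follows from Beltrami's theorem exactly as you say. Since the paper offers nothing to compare against, there is no divergence of method to report.
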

\section{Isometric models of the Funk disc and the Finsler-Poincar{\'e} disc}
In this section, we introduce the isometric models of the Funk disc viz., [FUH-$1$], [FB] and [FUS-$1$] and we also introduce the isometric models of the Finsler-Poincar\'e disc viz., [FUH-$2$] and [FUS-$2$], as stated in the introduction.\\\\
 Throughout  $\langle,\rangle$ will denote the Euclidean inner product and we will be using the following notations.
\begin{itemize}
\item $\mathbb{D} = \left\lbrace(x^1, x^2)\in \mathbb{R}^2 : (x^1)^2+(x^2)^2 < 1\right\rbrace,$ the unit disc in $\mathbb{R}^2$. 
\item $\mathbb{U}=\left\lbrace(x^1, x^2)\in \mathbb{R}^2 : x^2 > 0\right\rbrace,$  the {upper half plane in}  $\mathbb{R}^2$
\item
$\mathbb{H_+} = \left\lbrace(\tilde{x}^1, \tilde{x}^2, \tilde{x}^3)\in \mathbb{R}^3 : \tilde{x}^3 = \sqrt{1+(\tilde{x}^1)^2+(\tilde{x}^2)^2} \right\rbrace,$ the
upper half of\\ the hyperboloid of two sheets in $\mathbb{R}^3$.
\item $\mathbb{B} = \left\lbrace(x^1, x^2)\in \mathbb{R}^2 :\frac{-\pi}{2}< x^2 < \frac{\pi}{2}\right\rbrace,$ 
 the {band in}  $\mathbb{R}^2$. 
\item
$\mathbb{S}^2_+=\{(x^1,x^2,x^3)\in \mathbb{R}^3 :  (x^1)^2 +(x^2)^2+(x^3)^2=1 \ \text{and} \ x^3 > 0\},$ the
upper half of the hemisphere in $\mathbb{R}^3$.
 \end{itemize}

\subsection{The Funk Disc [FF]}\label{Sec 3.1}
  \noindent
  Let us consider a proper strongly convex bounded set $\Omega \subset \mathbb{R}^n$. This will be our ground manifold. The tangent space $T_x\Omega$ at each point $x \in \Omega$ can be identified with $\mathbb{R}^n$. The Finsler structure $F_F$ on $\Omega$ is such that the unit ball centered at a point $x\in  \Omega$ is the domain $\Omega$  in the tangent space $T_x\Omega \cong \mathbb{R}^n$ itself. Thus 
 the Finsler structure $F_F$ on $\Omega$ is defined  by  \eqref{eqn 3.200}.
Let the convex set $\Omega$ be the unit disc $\mathbb{D}$,
then $$a=\left( x+\frac{v}{F_F(x,v)}\right) \in \partial \mathbb{D}  \, \Longleftrightarrow \, | x+\frac{v}{F_F(x,v)}|^2 =1.$$
 Rewriting this condition as,
$$F_F^2(x,v) (1-| x|^2)- 2F_F(x,v) \langle x , v \rangle -| v |^2=0.$$
 The non-negative root of the above quadratic equation is,  
\begin{equation}\label{eqn 3.A1}
F_F(x,v)=\alpha_F(x,v)+\beta_F(x,v),
\end{equation}
where $\displaystyle \alpha_F(x,v)= \frac{\sqrt{\left(1-|x|^2 \right) |v|^2+ \langle x ,  v \rangle ^2 } }{1-|x|^2}$ is the well known Klein metric on the unit disc and ~$\displaystyle \beta_F(x,v)=\frac{ \langle x ,  v \rangle}{1-|x|^2}$  is a $1$-form on the disc.
Since   $||\beta_F||_{\alpha_F}=|x| < 1$,  $F_F$ is a positive definite Randers metric also known as the Funk metric on the unit disc. 
Clearly, the Klein metric and the Funk metric are projectively equivalent (Theorem \ref{lem 3.1}).  

 \begin{proposition}\label{ppn A3.10}
The geodesics of the Funk metric are the line segments in the open unit disc.  
 \end{proposition}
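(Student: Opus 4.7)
The plan is to deduce the proposition as a direct application of Theorem \ref{lem 3.1}. Having already written $F_F = \alpha_F + \beta_F$ with $\alpha_F$ the Klein metric and $\beta_F(x,v) = \frac{\langle x, v\rangle}{1-|x|^2}$ in the preceding subsection, only two ingredients remain: (i) that $\beta_F$ is closed, and (ii) that the Klein metric has constant curvature, so that its geodesics are Euclidean line segments.

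First I would verify closedness of $\beta_F$. The cleanest way is to exhibit a potential: since
\begin{equation*}
d\!\left(-\tfrac{1}{2}\log(1-|x|^2)\right) = \frac{\langle x, dx\rangle}{1-|x|^2} = \beta_F,
\end{equation*}
the 1-form $\beta_F$ is exact, hence closed. (Alternatively, one checks $\partial_j b_i = \partial_i b_j$ directly for $b_i = x^i/(1-|x|^2)$.)

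Next I would recall that $\alpha_F$ is the Klein (Beltrami--Cayley) model of hyperbolic space on $\mathbb{D}$, which has constant sectional curvature $-1$; in this model the Riemannian geodesics are exactly the Euclidean chords of the open unit disc. Applying Theorem \ref{lem 3.1} with $\alpha = \alpha_F$ and $\beta = \beta_F$, the Finslerian geodesics of $F_F$ share the same trajectories as those of $\alpha_F$, hence are open line segments of $\mathbb{D}$ (up to reparametrization).

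There is really no obstacle here beyond bookkeeping; the only thing one must be careful about is that Theorem \ref{lem 3.1} gives equality of trajectories (unparametrized geodesics) rather than of parametrizations, so the statement of the proposition is correctly phrased in terms of the underlying point sets. The explicit Funk parametrization of these segments is the subject of the later section on geodesics and is not needed here.
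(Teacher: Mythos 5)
Your proof is correct and follows essentially the same route as the paper: exhibiting the potential $-\tfrac{1}{2}\log(1-|x|^2)$ (which is exactly the paper's $f_F(x)=\log\tfrac{1}{\sqrt{1-|x|^2}}$) to show $\beta_F$ is exact, hence closed, and then invoking Theorem \ref{lem 3.1} together with the fact that Klein geodesics are the Euclidean chords of $\mathbb{D}$. Your added remark that the theorem only identifies unparametrized trajectories is a sensible precision, consistent with how the paper defers the explicit parametrization to Section 4.
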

\begin{proof} The Funk metric is given by \eqref{eqn 3.A1},
  where  $\beta_F = df_F$  with $f_F(x)=\log\frac{1}{\sqrt{1-|x|^2}}$. Thus, $\beta_F$ is exact as well as closed one form. Therefore,  by Theorem \ref{lem 3.1} the geodesics of the Funk metric and the Klein metric are pointwise same, and they are the line segments in the open unit disc. 
\end{proof}

  \subsection{The realization of the Funk metric in the unit disc on the upper sheet of the hyperboloid of two sheets \textbf{[FUH-\boldmath$1$]}}\label{Sec 3.2}
  It is well known that, the pullback of the Lorenzian metric on the upper sheet of the hyperboloid of two sheets is the realization of the Klein metric on the unit disc. In this subsection, we {\it construct} a non-positive definite Randers metric on the upper half space and show that, its pullback on the upper sheet of the hyperboloid of two sheets is the realization of the Funk metric on the unit disc. \\
  
 \noindent 
 Let $(\mathbb{R}_+^3,\alpha_L) $ denote 
 the upper half space $\mathbb{R}_+^3$ with the Lorentzian  metric $\alpha_L$ defined below, that is ,
 $$\mathbb{R}_+^3 = \left\lbrace(\tilde{x}^1,\tilde{x}^2, \tilde{x}^3)\in \mathbb{R}^3 : \tilde{x}^3 > 0\right\rbrace, $$
 $\alpha_L(\tilde{x},\tilde{v})=\sqrt{(\tilde{v}^1)^2+(\tilde{v}^2)^2-(\tilde{v}^3)^2}$ with $\tilde{x}\in \mathbb{R}_+^3 $ and $\tilde{v} \in T_{\tilde{x}}\mathbb{R}_+^3\cong \mathbb{R}^3$. \\
 Now consider the deformation $F_L$ of $\alpha_L$ by  $\displaystyle \beta_L=\frac{1}{\tilde{x}^3}d\tilde{x}^3$ in $\mathbb{R}_+^3$ as follows: 
  \begin{equation}\label{eqn 3.12}F_L(\tilde{x},\tilde{v}) =\alpha_L(\tilde{x},\tilde{v})+\beta_L(\tilde{x},\tilde{v}).\end{equation}
  It should be noted that $F_L$ is {\it a non-positive definite} Randers metric.\\
 Now  we parametrize the upper half portion $\mathbb{H_+}$
   of the hyperboloid of two sheets in $\mathbb{R}^3$ as:
\begin{equation}\label{eqn 3.A2}
\eta : \mathbb{D} \subset \mathbb{R}^2 \rightarrow \mathbb{H}_+\subset \mathbb{R}_+^3,~~~ \eta(x)=\left( \frac{x}{\sqrt{1-|x|^2}},\frac{1}{\sqrt{1-|x|^2}}\right).
\end{equation}
Note that $\eta$ is a smooth diffeomorphism between $\mathbb{D}$ and $\mathbb{H_+}$. \\

\begin{proposition}
The pullback of the metric $F_L$ defined as above, on the upper sheet of the hyperboloid of two sheets by the map $\eta$ is the realization of the Funk metric  on the upper sheet of the hyperboloid, that is,  $\eta^*F_L(x,v)= F_F(x,v)$ for all $(x,v) \in T \mathbb{D}$. 
\end{proposition}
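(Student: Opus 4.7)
The plan is a direct verification that splits along the Randers decomposition $F_L = \alpha_L + \beta_L$. Since pullback is linear on one-forms and respects square roots of pulled-back quadratic forms (wherever the argument under the root is non-negative), it suffices to show separately that $\eta^{*}\alpha_{L}=\alpha_{F}$ and $\eta^{*}\beta_{L}=\beta_{F}$, after which adding yields $\eta^{*}F_{L}=\alpha_{F}+\beta_{F}=F_{F}$.

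First I would compute the differential of $\eta$. Writing $\tilde{x}^{j}\circ\eta=x^{j}/\sqrt{1-|x|^{2}}$ for $j=1,2$ and $\tilde{x}^{3}\circ\eta=1/\sqrt{1-|x|^{2}}$, differentiation gives
\[
d\tilde{x}^{j}=\frac{dx^{j}}{\sqrt{1-|x|^{2}}}+\frac{x^{j}\,\langle x,dx\rangle}{(1-|x|^{2})^{3/2}}\quad(j=1,2),\qquad d\tilde{x}^{3}=\frac{\langle x,dx\rangle}{(1-|x|^{2})^{3/2}}.
\]
Applied to $v\in T_{x}\mathbb{D}$, these give the push-forward components $\tilde{v}^{i}=d\tilde{x}^{i}(v)$.

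Next I would pull back the one-form $\beta_{L}=d\tilde{x}^{3}/\tilde{x}^{3}$. Using the two displays above,
\[
\eta^{*}\beta_{L}=\frac{\langle x,dx\rangle/(1-|x|^{2})^{3/2}}{1/\sqrt{1-|x|^{2}}}=\frac{\langle x,dx\rangle}{1-|x|^{2}},
\]
which is exactly $\beta_{F}$. Then I would pull back $\alpha_{L}^{2}=(\tilde{v}^{1})^{2}+(\tilde{v}^{2})^{2}-(\tilde{v}^{3})^{2}$. Squaring the expressions for $\tilde v^j$, the cross terms $2x^{j}v^{j}\langle x,v\rangle/(1-|x|^{2})^{2}$ combine to $2\langle x,v\rangle^{2}/(1-|x|^{2})^{2}$, and the $|x|^{2}$-piece from $(\tilde v^1)^2+(\tilde v^2)^2$ together with $-(\tilde v^3)^2$ collapses via $|x|^{2}-1=-(1-|x|^{2})$ to $-\langle x,v\rangle^{2}/(1-|x|^{2})^{2}$. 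After simplification one obtains
\[
\eta^{*}\alpha_{L}^{2}=\frac{|v|^{2}}{1-|x|^{2}}+\frac{\langle x,v\rangle^{2}}{(1-|x|^{2})^{2}}=\frac{(1-|x|^{2})|v|^{2}+\langle x,v\rangle^{2}}{(1-|x|^{2})^{2}}=\alpha_{F}(x,v)^{2},
\]
and in particular the pulled-back expression is non-negative, so $\eta^{*}\alpha_{L}=\alpha_{F}$ is well defined despite $\alpha_{L}$ being only non-positive definite on $\mathbb{R}^{3}_{+}$. Adding the two pieces yields $\eta^{*}F_{L}=F_{F}$.

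There is no genuine obstacle here; the only subtlety worth flagging is the sign check that ensures the Lorentzian quadratic form is positive on the image $d\eta(T\mathbb{D})$, which the computation above verifies automatically. The calculation itself is bookkeeping with the factor $(1-|x|^{2})^{-1/2}$ and can be streamlined by computing $\eta^{*}\alpha_{L}^{2}$ directly as above rather than term by term.
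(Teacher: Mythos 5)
Your proof is correct and follows essentially the same route as the paper's: compute the differentials $d\eta^{1},d\eta^{2},d\eta^{3}$ explicitly and verify by direct substitution that the Lorentzian part pulls back to the Klein metric $\alpha_{F}$ and the one-form $\frac{1}{\tilde{x}^{3}}d\tilde{x}^{3}$ pulls back to $\beta_{F}$. Your added remark that the pulled-back quadratic form is positive on $d\eta(T\mathbb{D})$ is a nice explicit check of what the paper only notes in passing (Remark 3.1), but it does not change the argument.
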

\begin{proof}
First we find $\eta^*F_L(x,v)$, for $x\in \mathbb{D}$ and $v\in T_x \mathbb{D}\cong \mathbb{R}^2$.
 We have by \eqref{eqn 3.A2},\\
  $$\eta^1(x) = \frac{x^1}{\sqrt{1-|x|^2}} , ~ \eta^2(x) = \frac{x^2}{\sqrt{1-|x|^2}}  \; \mbox{and} \; \eta^3(x) = \frac{1}{\sqrt{1-|x|^2}}.$$
 Therefore,
\begin{equation*}
d\eta^1_x=\frac{1}{(1-|x|^2)^\frac{3}{2}}\left[ \left\lbrace 1- (x^2)^2\right\rbrace dx^1 + x^1x^2 dx^2\right],
\end{equation*}
\begin{equation*}
d\eta^2_x=\frac{1}{(1-|x|^2)^\frac{3}{2}}\left[  x^1x^2 dx^1 + \left\lbrace 1- (x^1)^2\right\rbrace dx^2\right], 
\end{equation*}
\begin{equation*}
d\eta^3_x =\frac{1}{(1-|x|^2)^\frac{3}{2}}\left[  x^1 dx^1 +x^2 dx^2\right].
\end{equation*}
Hence,
\begin{equation*}
\begin{split}
\eta^*F_L(x,v)=\left( \sqrt{(d\eta^1_x)^2+(d\eta^2_x)^2-(d\eta^3_x)^2}+\frac{1}{\eta^3(x)}d\eta^3_x\right)(v),\\ =\frac{\sqrt{\left(1-|x|^2 \right)|v|^2+\langle x , v \rangle ^2 }}{1-|x|^2}+\frac{ \langle x ,  v \rangle}{1-|x|^2} = F_F(x,v).
\end{split}
\end{equation*}
\end{proof}

\begin{rem} \label{rem 3.01}
    It is interesting to note that the Randers metric $F_L$ is not positive definite on the upper half space, however its pullback on the upper sheet of the hyperboloid of two sheets is a positive definite Randers metric, which is precisely  the Funk metric on the unit disc.
\end{rem}

\subsection{The Finsler-Poincar\'e Disc [FP]}\label{Sec 3.3}

The Poincar\'e metric on the unit disc is a model for the hyperbolic geometry in which a geodesic is represented as an arc of a circle, which intersect the disc's boundary orthogonally.  More precisely, the Poincar\'e metric $\alpha_P$ on the unit disc $\mathbb{D}$ is defined by $\displaystyle \alpha_P(x,v)=\frac{2|v|}{1-|x|^2}$, where $x=(x^1, x^2) \in \mathbb{D}$ and $v=(v^1, v^2) \in T_x\mathbb{D}$. The Finsler-Poincar\'e metric $F_P$ on the unit disc $\mathbb{D}$ is the deformation of the Poincare metric $\alpha_P$ by a one form $\beta_P$, given by $\displaystyle \beta_P(x,v)=\frac{4\langle x ,  v \rangle}{1-|x|^4} $,  $x=(x^1, x^2) \in \mathbb{D}$ and $v=(v^1, v^2) \in T_x\mathbb{D}$, and is defined as follows:
\begin{equation}\label{eqn 3.2}
F_P(x,v)=\alpha_P(x,v)+\beta_P(x,v).
\end{equation}
  Since  $\displaystyle ||\beta_P||_{\alpha_P}^2=\frac{4|x|^2}{ (1+|x|^2)^2}< 1$, the Finsler-Poincar\'e metric $F_P$ is a positive definite Randers metric (\cite{SSZ}, $\S 1.3~E$).
  \begin{rem} \label{rem 3.02}
      The Finsler-Poincar\'e metric $F_P$ is given by \eqref{eqn 3.2}, where $\beta_P = df_P,$ with $\displaystyle f_P(x)=\log\frac{1+|x|^2}{1-|x|^2}$, $x=(x^1, x^2) \in \mathbb{D}$.
Hence, $\beta_P$ is an exact as well as closed one form and therefore, in view of Theorem \ref{lem 3.1}, the Poincar\'e metric $\alpha_P$ and the Finsler-Poincar\'e metric $F_P$ are locally projectively equivalent. And the geodesics of $\alpha_P$ and $F_P$ are pointwise same.
  \end{rem}

\subsection{The realization of the Finsler-Poincar\'e disc on the upper half of the hyperboloid of two sheets \textbf{[FUH-\boldmath$2$]}}\label{Sec 3.4}

In this subsection, we show that the pullback on the upper sheet of the hyperboloid of two sheets of a non-positive definite Randers metric on the upper half space  is the realization of the Finsler-Poincar\'e metric on the unit disc. Let us consider a diffeomorphism $\pi$ between $\mathbb{D}$ and $\mathbb{H}_+$, given by:
 
\begin{equation}\label{eqn 3.A3}
\pi : \mathbb{D} \subset \mathbb{R}^2\rightarrow \mathbb{H}_+\subset \mathbb{R}_+^3, ~~~\pi(x)=\left( \frac{2x}{1-|x|^2},\frac{1+|x|^2}{1-|x|^2}\right) . 
\end{equation}

\begin{proposition}
 The pullback of the metric $F_L$ defined as above, on the upper sheet of the hyperboloid of two sheets, by the map $\pi$ is the realization of the Finsler-Poincar\'e metric on the unit disc , that is,  $\pi^*F_L(x,v)= F_P(x,v)$. 
 
 \end{proposition}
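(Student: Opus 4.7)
The plan is to verify $\pi^*F_L = F_P$ by a direct coordinate computation, paralleling the proof already given for $\eta^*F_L = F_F$. Since $F_L = \alpha_L + \beta_L$ decomposes as a Lorentzian pseudonorm plus a closed one-form, and $F_P = \alpha_P + \beta_P$ admits an analogous decomposition, the cleanest route is to show separately that $\pi^*\alpha_L = \alpha_P$ and $\pi^*\beta_L = \beta_P$, and then sum.

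First I would compute the three differentials $d\pi^1, d\pi^2, d\pi^3$ at an arbitrary $(x,v) \in T\mathbb{D}$. Writing $a := 1 - |x|^2$ and $s := \langle x, v\rangle$ for brevity, the quotient rule produces expressions of the form $d\pi^i(v) = 2(a v^i + 2 x^i s)/a^2$ for $i = 1, 2$, together with $d\pi^3(v) = 4s/a^2$. I would then assemble the Lorentzian combination $(d\pi^1(v))^2 + (d\pi^2(v))^2 - (d\pi^3(v))^2$ and show it collapses to $4|v|^2/a^2$. The key algebraic step is that the cross and quadratic terms in $s^2$ combine, via the identity $a + |x|^2 = 1$, into exactly the quantity killed by $(d\pi^3(v))^2$. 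Taking the (nonnegative) square root gives $\pi^*\alpha_L(x,v) = 2|v|/(1-|x|^2) = \alpha_P(x,v)$.

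For the one-form part, the ratio $d\pi^3(v)/\pi^3(x)$ simplifies, using $\pi^3(x) = (1+|x|^2)/a$, to $4\langle x, v\rangle/(1-|x|^4) = \beta_P(x,v)$. Adding the two contributions yields the desired equality $\pi^*F_L(x,v) = F_P(x,v)$. The only delicate point is the signed cancellation in the Lorentzian part; it is precisely the $-(\tilde v^3)^2$ in $\alpha_L$ that allows a non-positive-definite ambient metric to pull back to the positive-definite Finsler-Poincar\'e metric on the image $\mathbb{H}_+$, reinforcing Remark \ref{rem 3.01}. Beyond this sign-bookkeeping, the verification is routine algebra and I anticipate no serious obstacle.
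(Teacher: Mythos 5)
Your proposal is correct and follows essentially the same route as the paper: a direct computation of the differentials $d\pi^1, d\pi^2, d\pi^3$, cancellation of the $\langle x,v\rangle^2$ terms in the Lorentzian combination via $(1-|x|^2)+|x|^2=1$, and identification of $d\pi^3/\pi^3$ with $\beta_P$. Your splitting into $\pi^*\alpha_L=\alpha_P$ and $\pi^*\beta_L=\beta_P$ is exactly what the paper's one-line display does implicitly, so there is nothing to add.
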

\begin{proof}
To show that $\pi^*F_L(x,v)=F_P(x,v)$. We have by \eqref{eqn 3.A3}, for $x \in \mathbb{D}$, 
  $$\pi^1(x) = \frac{2x^1}{1-|x|^2}, ~ \pi^2(x) = \frac{2x^2}{1-|x|^2}  ~ \mbox{and} ~ 
  \pi^3(x) = \frac{1+|x|^2}{1-|x|^2}. $$
 Therefore, for $v \in T_{x}\mathbb{D}$,
 \begin{equation*}
d\pi^1_x=\frac{2}{(1-|x|^2)^2}\left[ \left\lbrace 1- |x|^2+2(x^1)^2\right\rbrace dx^1 + 2x^1x^2 dx^2\right], 
\end{equation*}
\begin{equation*}
d\pi^2_x =\frac{2}{(1-|x|^2)^2}\left[  2x^1x^2 dx^1 + \left\lbrace 1- |x|^2+2(x^2)^2\right\rbrace dx^2\right], 
\end{equation*}
\begin{equation*}
d\pi^3_x=\frac{2}{(1-|x|^2)^2}\left[ 2 x^1 dx^1 +2x^2 dx^2\right]. 
\end{equation*}
Hence,
\begin{equation*}
\begin{split}
\pi^*F_L(x,v)=\left( \sqrt{(d\pi^1_x)^2+(d\pi^2_x)^2-(d\pi^3_x)^2}+\frac{1}{\pi^3(x)}d\eta^3\right)(v)\\ =\frac{2|v|}{1-|x|^2}+\frac{4\langle x ,  v \rangle}{1-|x|^4} = F_P(x,v).
\end{split}
\end{equation*}
\end{proof}

\begin{rem} \label{rem 3.03}
   We have shown that the pullback of $F_L$ on the upper sheet of the hyperboloid of two sheets $\mathbb{H}_+$ through two different 
diffeomorphisms gives the Funk as well as the  Finsler-Poincar\'e metric on the open unit disc. 
\end{rem}

\subsection{The Finsler-Poincar\'e  upper half plane \textbf{[FU]} and the Finsler Band model \textbf{[FB]}}\label{Sec 3.5}
The upper half-plane $\mathbb{U}= \left\lbrace(x^1, x^2)\in \mathbb{R}^2 : x^2 > 0\right\rbrace$ with the metric $\displaystyle \alpha_U(x,v)=\frac{|v|}{x^2}$, where $x=(x^1, x^2) \in \mathbb{U}$, $v=(v^1, v^2) \in T_x\mathbb{U}$, called the Poincar\'e upper half metric, is the standard model of two-dimensional hyperbolic geometry. The geodesics in this model of hyperbolic plane geometry are  vertical lines in $\mathbb{U}$ and upper semi circles centred on $x^1$-axis. The Finsler-Poincar\'e  upper half plane metric $F_U$ in the upper half plane is the deformation of the Poincar\'e upper half plane metric $\alpha_U$  by the $1$-form $\displaystyle \beta_U(x,v)=\frac{\langle w(x)  ,  v \rangle}{x^2(4+|x|^2)}$,  where $x=(x^1, x^2) \in \mathbb{U}$, $v=(v^1, v^2) \in T_x\mathbb{U}$ and $w(x):=(2x^1x^2,(x^2)^2-(x^1)^2-4)$; and  is defined as follows: 
\begin{equation}\label{eqn 3.3}
F_U(x,v)=\alpha_U(x,v)+\beta_U(x,v).
\end{equation}
It is easy to see that, as $\displaystyle ||\beta_U||_{\alpha_U}^2=\frac{|w(x)|}{\left( 4+|x|^2\right)^2 }< 1$, the Finsler-Poincar\'e metric $F_U$ in the upper half plane $\mathbb{U}$ is a positive definite Randers metric.\\
\begin{rem} \label{rem 3.04}
      The Finsler-Poincar\'e  upper half metric $F_U$ is given by \eqref{eqn 3.3}, where $\beta_U = df_U,$ with $\displaystyle f_U(x)=\log\frac{4+|x|^2}{x^2}$.
Hence, $\beta_U$ is an exact as well as closed one form. Therefore, by Theorem \ref{lem 3.1},  the Poincar\'e upper half metric $\alpha_U$ and the Finsler-Poincar\'e  upper half metric $F_U$ in the upper half plane $\mathbb{U}$ are locally projectively equivalent, that is, the geodesics of $\alpha_P$ and $F_P$ are pointwise same.
  \end{rem}
  
\vspace{0.1in}

\noindent
In this subsection, we find a new Finsler model corresponding to the  {\it Band model} of the Riemannian hyperbolic space, and termed it as the {\it Finsler Band model}.\\
Recall that the Band in $\mathbb{R}^2$ is: 
$$\mathbb{B} = \left\lbrace(x^1, x^2)\in \mathbb{R}^2 :\frac{-\pi}{2}< x^2 < \frac{\pi}{2}\right\rbrace.$$ And
$(\mathbb{B}, {\alpha}_B)$ is isometric to the Riemannian hyperbolic space, where for
$x \in \mathbb{B}, v \in T_{x}\mathbb{B}$, 
\begin{equation}\label{eqn3.AS}
{\alpha}_B(x, v) =\frac{|v|}{\cos x^2}.
\end{equation}
Consider one form $\beta$ on $\mathbb{B}$ defined as: 
\begin{equation}\label{form}
      \beta_{B}(x, v)=\frac{\left( e^{2x^1}-4\right) v^1 \cos x^2 +\left(e^{2x^1}+4\right) v^2 \sin x^2 }{\left(e^{2x^1}+4\right) \cos x^2}.
\end{equation}
Define a function $F_{B}$ on $T\mathbb{B}$ as: 
\begin{equation}\label{eqn 3.AA}
F_{B}(x,v) =  {\alpha}_B(x, v)  + \beta_{B}(x, v).
\end{equation}
It is easy to see that $\displaystyle ||\beta_B||_{\alpha_B}^2=\frac{\left( e^{2x^1}+4\right)^2-16 e^{2x^1}\left(\cos x^2 \right)^2  }{\left( e^{2x^1}+4\right)^2}< 1$, therefore, $F_{B}$ is a Randers metric on $\mathbb{B}$.\\
\begin{rem} \label{rem 3.05}
The Finsler metric $F_B$  is given by \eqref{eqn 3.AA}, where $\beta_B = df_B,$ with
     $$f_B(x)= x^1+ \log\left( 1+\frac{4}{e^{2x^1}}\right)+ \log \sec x^2 .$$
   So $\beta_B$ is the exact as well as  closed $1$-form. Therefore, the Finsler fundamental metric $F_B$ and $\alpha_B$ in the band $ \mathbb{B}$ are locally projectively equivalent and the geodesics of $\alpha_B$ and $F_B$ are pointwise same (Theorem \ref{lem 3.1}). 
\end{rem}

 \noindent  
Now  we show that the Finsler band model $(\mathbb{B},F_{B})$  is isometric to the
Finsler-Poincar\'e upper half plane and hence in turn isometric to the Funk disc.

 \begin{proposition}
  The Finsler band model $(\mathbb{B},F_{B})$ is isometric to the Finsler-Poincar\'e upper half plane $(\mathbb{U},F_{U})$. 
   \end{proposition}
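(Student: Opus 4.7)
The plan is to exhibit an explicit diffeomorphism $\phi\colon \mathbb{B}\to\mathbb{U}$ and verify directly that $\phi^{*}F_{U}=F_{B}$, by treating the Riemannian part and the one-form part separately. Motivated by the classical conformal isometry $z\mapsto i e^{z}$ between the hyperbolic band and the hyperbolic upper half-plane, I would define
\[
\phi(x^{1},x^{2}) \;=\; \bigl(e^{x^{1}}\sin x^{2},\; e^{x^{1}}\cos x^{2}\bigr).
\]
Since $x^{2}\in(-\pi/2,\pi/2)$ forces $\cos x^{2}>0$, the image lies in $\mathbb{U}$, and the map
$\phi^{-1}(y^{1},y^{2})=\bigl(\tfrac{1}{2}\log(|y|^{2}),\,\arctan(y^{1}/y^{2})\bigr)$ is a smooth inverse, so $\phi$ is a diffeomorphism.

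First I would compute the Jacobian and check $\phi^{*}\alpha_{U}=\alpha_{B}$. A direct calculation gives $(d\phi^{1})^{2}+(d\phi^{2})^{2} = e^{2x^{1}}\bigl((dx^{1})^{2}+(dx^{2})^{2}\bigr)$, and since the second coordinate of $\phi$ is $e^{x^{1}}\cos x^{2}$, one finds
\[
\phi^{*}\alpha_{U}(x,v) \;=\; \frac{\sqrt{e^{2x^{1}}|v|^{2}}}{e^{x^{1}}\cos x^{2}} \;=\; \frac{|v|}{\cos x^{2}} \;=\; \alpha_{B}(x,v),
\]
which is just the standard fact that $\phi$ is a Riemannian isometry between the band and upper half-plane models of the hyperbolic plane.

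Next, for the one-form part, I would exploit the fact, noted in Remarks \ref{rem 3.04} and \ref{rem 3.05}, that $\beta_{U}=df_{U}$ and $\beta_{B}=df_{B}$ are exact. Hence $\phi^{*}\beta_{U}=\phi^{*}df_{U}=d(f_{U}\circ\phi)$, and it is enough to verify that $f_{U}\circ\phi-f_{B}$ is constant. Substituting $\phi$ into $f_{U}(y)=\log\frac{4+|y|^{2}}{y^{2}}$ gives
\[
f_{U}(\phi(x)) \;=\; \log\frac{4+e^{2x^{1}}}{e^{x^{1}}\cos x^{2}} \;=\; -x^{1} + \log(e^{2x^{1}}+4) - \log\cos x^{2},
\]
which is exactly $f_{B}(x)$. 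Therefore $\phi^{*}\beta_{U}=\beta_{B}$, and combining with the Riemannian part yields $\phi^{*}F_{U}=\phi^{*}\alpha_{U}+\phi^{*}\beta_{U}=\alpha_{B}+\beta_{B}=F_{B}$.

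The main obstacle is really just guessing the correct $\phi$; once the candidate $\phi(x)=(e^{x^{1}}\sin x^{2},\,e^{x^{1}}\cos x^{2})$ is identified from the classical hyperbolic isometry, the verification is a routine computation, and the miraculous alignment $f_{U}\circ\phi=f_{B}$ (with additive constant zero, not just a constant) is a reassuring consistency check that the Randers deformations on both sides have been chosen compatibly.
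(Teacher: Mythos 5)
Your proof is correct. It follows the same overall strategy as the paper (exhibit an explicit diffeomorphism and verify that the pullback of $F_U$ is $F_B$), but it differs in two ways worth noting. First, your map $\phi(x)=(e^{x^{1}}\sin x^{2},\,e^{x^{1}}\cos x^{2})$ is the mirror image of the paper's $\varphi(x)=(-e^{x^{1}}\sin x^{2},\,e^{x^{1}}\cos x^{2})$; both work, since $\alpha_U$, $f_U$ and $w$ depend on the point only through $|y|^{2}$ and $y^{2}$, and both maps are conformal with conformal factor $e^{x^{1}}$, so each is an isometry (they differ by the reflection $y^{1}\mapsto -y^{1}$, which is itself an isometry of $(\mathbb{U},F_U)$ because $f_U$ is even in $y^1$). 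Second, and more substantively, you handle the one-form part by exactness: since $\beta_U=df_U$ and $\beta_B=df_B$, it suffices to check $f_U\circ\phi=f_B$, which is a one-line scalar computation. The paper instead computes $\beta_U(\varphi(x),D\varphi_x(v))$ directly via $\langle w(\varphi(x)),D\varphi_x(v)\rangle$, which is messier but does not rely on Remarks \ref{rem 3.04} and \ref{rem 3.05}. Your route buys a shorter verification at the cost of invoking the potentials; the identity $f_U\circ\phi=f_B$ (with additive constant zero) is indeed the cleanest way to see why the two Randers deformations correspond. One small point of rigor: for $\phi^{*}\beta_U=\beta_B$ you only need $d(f_U\circ\phi)=df_B$, so "constant difference" suffices, as you say; that the constant happens to be zero is not needed for the argument.
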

 \begin{proof}
 Consider the map $\varphi:\mathbb{B} \rightarrow \mathbb{U}$ \; \mbox{defined by},
  $$\varphi(x)=e^{x^1}\left( -\sin x^2, \cos x^2\right).$$
  Its inverse, 
  $\varphi^{-1}:\mathbb{U} \rightarrow \mathbb{B}$ is given by,
 \begin{equation*}
 \varphi^{-1}(x)=\left( \log \sqrt{(x^1)^2+(x^2)^2},\; -\tan^{-1} \frac{x^1}{x^2}\right).
 \end{equation*}
 Clearly $\varphi$ is a diffiomorphism between $\mathbb{B}$ and $\mathbb{U}$. We claim that $\varphi$  is indeed Finslerian isometry between $(\mathbb{B},F_{B})$ and $(\mathbb{U},F_{U})$, i.e., $\varphi^*F_U(x,v)=F_B(x,v)$.\\
 By definition of the pull back $\varphi^*$, we have, $$\varphi^*F_U(x,v)=F_U\left(\varphi(x), D\varphi_x(v) \right)=\alpha_U( \varphi(x) ,  D\varphi_x(v))+ \beta _U( \varphi(x) ,  D\varphi_x(v)).$$
Since, $$ D\varphi_x(v)=-e^{x^1}( v^1 \sin x^2 + v^2  \cos x^2,    - v^1 \cos x^2  +v^2 \sin x^2),$$ for every $v =(v^1, v^2 )\in T_xD \cong \mathbb{R}^2$ and  $$w(\varphi(x))=\left(-e^{2x^1} \sin 2x^2, \; e^{2x^1} \cos 2x^2 -4\right),$$ as $w(x) =(2x^1x^2,(x^2)^2-(x^1)^2-4)$. We have, $$ \langle w(\varphi(x)) ,  D\varphi_x(v) \rangle=e^{x^1}\left( e^{2x^1}-4\right) v^1\cos x^2 +e^{x^1}\left(e^{2x^1}+4\right) v^2\sin x^2.$$
Therefore, in view of $\displaystyle \alpha_U(x,v)=\frac{|v|}{x^2}$ and $\displaystyle \beta_U(x,v)=\frac{\langle w(x)  ,  v \rangle}{x^2(4+|x|^2)}$,  we have 
   $\displaystyle \alpha _U( \varphi(x) ,  D\varphi_x(v))=\frac{|v|}{\cos x^2},$
      and
    $
      \displaystyle \beta _U( \varphi(x) ,  D\varphi_x(v))= \frac{\left( e^{2x^1}-4\right) v^1 \cos x^2 +\left(e^{2x^1}+4\right) v^2 \sin x^2 }{\left(e^{2x^1}+4\right) \cos x^2}.
      $
      Hence, 
 \begin{equation*}
 \varphi^*F_U(x,v)=F_U\left(\varphi(x), D\varphi_x(v) \right)=\alpha_U( \varphi(x) ,  D\varphi_x(v))+ \beta _U( \varphi(x) ,  D\varphi_x(v))=F_B(x,v), 
 \end{equation*}
   $\forall(x, v) \in T\mathbb{B}$. 
Thus, the map $\varphi$ is an isometry between $FB$ and $FU$.
\end{proof}

\vspace{.3cm}
\noindent
\begin{thm}
The Finsler band model $(\mathbb{B}, F_B)$  and the Funk disc model $(\mathbb{D}, F_F)$ are isometric to each other.
\end{thm}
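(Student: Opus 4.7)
The plan is to deduce the theorem by transitivity, composing the map $\varphi:(\mathbb{B},F_{B})\to(\mathbb{U},F_{U})$ constructed in the preceding proposition with the already known isometry between the Finsler-Poincar\'e upper half plane and the Funk disc.

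First, I would invoke (\cite{AMAK}, Theorem $3$), which supplies a smooth diffeomorphism $g:(\mathbb{D},F_{F})\to(\mathbb{U},F_{U})$ satisfying $g^{*}F_{U}=F_{F}$; this is the map labelled $g$ in Figure $1$. Its inverse $g^{-1}:(\mathbb{U},F_{U})\to(\mathbb{D},F_{F})$ is then a smooth diffeomorphism with $(g^{-1})^{*}F_{F}=F_{U}$.

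Next, I would set $\xi^{-1}:=g^{-1}\circ\varphi:\mathbb{B}\to\mathbb{D}$, which is a diffeomorphism as a composition of diffeomorphisms. The isometry property follows from the contravariant functoriality of the pullback:
\begin{equation*}
(\xi^{-1})^{*}F_{F}=\varphi^{*}(g^{-1})^{*}F_{F}=\varphi^{*}F_{U}=F_{B},
\end{equation*}
where the middle equality uses that $g$ is an isometry and the last equality is the content of the preceding proposition. Hence $\xi^{-1}$ is a Finslerian isometry from $(\mathbb{B},F_{B})$ to $(\mathbb{D},F_{F})$, and its inverse $\xi$ provides the isometry in the opposite direction.

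The main technical step has already been completed in the preceding proposition, so there is no genuine obstacle remaining in the present statement; the only point that requires care is the bookkeeping of the directions of the various maps so that the pullback chain is well-formed. An explicit closed form for $\xi^{-1}$ could be obtained by substituting the Cayley-type formula for $g$ furnished by \cite{AMAK} into the expression for $\varphi$, but for the isometry assertion itself this explicit form is unnecessary.
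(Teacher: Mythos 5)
Your proof is correct, but it takes a different route from the paper. The paper proves the theorem by a direct computation: it writes down the explicit diffeomorphism $\xi:\mathbb{D}\to\mathbb{B}$ of \eqref{eqn 3.A7}, computes its differential $D\xi_x$, and verifies term by term that $\alpha_B(\xi(x),D\xi_x(v))=\alpha_F(x,v)$ and $\beta_B(\xi(x),D\xi_x(v))=\beta_F(x,v)$, so that $\xi^*F_B=F_F$. You instead obtain the isometry by transitivity, setting $\xi^{-1}=g^{-1}\circ\varphi$ where $\varphi:(\mathbb{B},F_B)\to(\mathbb{U},F_U)$ is the map from the preceding proposition and $g:(\mathbb{D},F_F)\to(\mathbb{U},F_U)$ is the isometry from \cite{AMAK}; the pullback chain $(\xi^{-1})^*F_F=\varphi^*(g^{-1})^*F_F=\varphi^*F_U=F_B$ is correctly assembled, and indeed $\varphi^{-1}\circ g$ reproduces exactly the formula \eqref{eqn 3.A7} for $\xi$ (one checks $(g^1)^2+(g^2)^2=4(1-x^1)/(1+x^1)$ and $g^1/g^2=x^2/\sqrt{1-|x|^2}$). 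Your argument is shorter and makes the logical structure transparent, at the cost of importing the external isometry $g$ from \cite{AMAK} (which the paper does use elsewhere, in Section 4, so this is consistent with its framework); the paper's direct verification is self-contained and has the side benefit of exhibiting the closed form of $\xi$ and of $D\xi_x$, which is then reused in Section 4 to parametrize the geodesics of the band model. Your last remark that the explicit form of $\xi^{-1}$ is unnecessary for the isometry assertion is true for this theorem in isolation, but the explicit map is in fact needed downstream.
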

 \begin{proof}
 Let us consider the map   $\xi:\mathbb{D} \rightarrow \mathbb{B}$, given by
   \begin{equation}\label{eqn 3.A7}
 \xi(x)=\left(\xi^1(x),\xi^2(x) \right) =\left( \log \left( 2\sqrt{\frac{1-x^1}{1+x^1}}\right) , -\tan^{-1} \left( \frac{x^2}{\sqrt{1-|x|^2}}\right) \right) ,
  \end{equation}
  with its inverse 
  \begin{equation*}
 \xi^{-1}:\mathbb{B} \rightarrow \mathbb{D},~~\xi^{-1}(x)=\left( \frac{4-e^{2x^1}}{4+e^{2x^1}},\frac{-4e^{x^1}\sin x^2}{4+e^{2x^1}}\right).
 \end{equation*}
 It suffices to show,
 \begin{equation*}
 F_F(x,v)=F_B\left(\xi(x), D\xi_x(v) \right), ~ \forall(x, v) \in T\mathbb{D},
 \end{equation*}
 where $D\xi_x$ denote the differential of $\xi$ at the point $x$.\\
      For any $v =(v^1, v^2 )\in T_x\mathbb{D} \cong \mathbb{R}^2$, the derivative $D\xi_x$ can be written as,  
       \begin{equation*}
        D\xi_x(v)=(
          -\frac{v^1}{1-(x^1)^2},
           -\frac{x^1x^2v^1}{(1-(x^1)^2)\sqrt{1-|x|^2}} - \frac{v^2}{\sqrt{1-|x|^2}}).
           \end{equation*}

\noindent
In view of \eqref{eqn3.AS} and \eqref{form},  we have, 
    \begin{equation*}
     \alpha _B( \xi(x) ,  D\xi_x(v))=\frac{\sqrt{\left(1-|x|^2 \right) |v|^2+ \langle x ,  v \rangle ^2 } }{1-|x|^2},~ ~ \beta _B( \xi(x) ,  D\xi_x(v))=\frac{\langle x ,  v \rangle}{1-|x|^2}.
    \end{equation*}

 Thus for any $(x, v) \in T\mathbb{D}$, we have,
 \begin{equation*}
 F_B\left(\xi(x), D\xi_x(v) \right)=\alpha _B( \xi(x) ,  D\xi_x(v))+ \beta _B( \varphi(x) ,  D\xi_x(v))=F_F(x,v).
 \end{equation*}
 \end{proof}

\subsection{The realization of Funk metric in the unit disc  on the upper hemi sphere \textbf{[FUS-\boldmath$1$]} }\label{Sec 3.6}
Let $\mathbb{R}_+^3 = \left\lbrace(\tilde{x}^1,\tilde{x}^2, \tilde{x}^3)\in \mathbb{R}^3 : \tilde{x}^3 > 0\right\rbrace $ be the upper half space with the hyperbolic metric $\alpha_+$, defined by $\displaystyle \alpha_+(\tilde{x},\tilde{v})=\frac{\sqrt{(\tilde{v}^1)^2+(\tilde{v}^2)^2+(\tilde{v}^3)^2}}{\tilde{x}^3}$ with $\tilde{x}=(\tilde{x}^1,\tilde{x}^2, \tilde{x}^3)\in \mathbb{R}_+^3 $ and $\tilde{v}=(\tilde {v}^1,\tilde{v}^2, \tilde{v}^3) \in T_{\tilde{x}}\mathbb{R}_+^3\cong \mathbb{R}^3$. Now let us consider the deformation of the upper half space $(\mathbb{R}_+^3,\alpha_+)$ by the one form $\displaystyle \beta_+(\tilde{x},\tilde{v})=- \frac{\tilde{v}^3}{\tilde{x}^3}$ as follows:
\begin{equation}\label{eqn 3.5}
F_+(\tilde{x},\tilde{v})=\alpha_+ (\tilde{x},\tilde{v})+ \beta_+(\tilde{x},\tilde{v}) = \frac{|\tilde{v}|}{\tilde{x}^3} - \frac{\tilde{v}^3}{\tilde{x}^3},
\end{equation}
where $\tilde{x}=(\tilde{x}^1,\tilde{x}^2, \tilde{x}^3)\in \mathbb{R}_+^3 $ and $\tilde{v}=(\tilde {v}^1,\tilde{v}^2, \tilde{v}^3) \in T_{\tilde{x}}\mathbb{R}_+^3\cong \mathbb{R}^3$.\\

\noindent
Let $\psi:\mathbb{D} \rightarrow \mathbb{S}_+^2$ be an immersion given by, 
\begin{equation}\label{eqn 3.A4}
\psi(x^1, x^2)=\left(x^1, x^2, \sqrt{1-|x|^2} \right),
\end{equation}
then it turns out that the pullback $\psi^*\alpha_+$ of the upper half hyperbolic space $(\mathbb{R}_+^3,\alpha_+)$ on  $\mathbb{D}$ is actually the Klein metric given by $\psi^*\alpha_+=\alpha_F$.

\vspace{.3cm}
\noindent
\begin{proposition}
 The pullback of the metric $F_+$ defined as above, on the upper hemi sphere by the map $\psi$ is the realization of the Funk metric on the unit disc, on the upper hemi sphere, that is,  $\psi^*F_+(x,v)= F_F(x,v) $ for all $(x, v) \in T\mathbb{D}$.
\end{proposition}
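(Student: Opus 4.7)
The plan is to proceed exactly as in the two preceding propositions for $\eta^*F_L$ and $\pi^*F_L$: compute the differential $d\psi_x$ coordinate-wise, substitute the image $\psi(x)$ and the image vector $d\psi_x(v)$ into the explicit formula $F_+(\tilde{x},\tilde{v}) = |\tilde{v}|/\tilde{x}^3 - \tilde{v}^3/\tilde{x}^3$, and check that the resulting expression separates into the Klein norm $\alpha_F(x,v)$ plus the one-form $\beta_F(x,v) = \langle x,v\rangle/(1-|x|^2)$ from \eqref{eqn 3.A1}.

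First I would write down the three component derivatives: since $\psi^1(x)=x^1$ and $\psi^2(x)=x^2$ the first two are trivial, and $\psi^3(x)=\sqrt{1-|x|^2}$ gives $d\psi^3_x = -(x^1 dx^1 + x^2 dx^2)/\sqrt{1-|x|^2}$. Thus for any $v\in T_x\mathbb{D}\cong \mathbb{R}^2$,
\[
d\psi_x(v) = \Bigl(v^1,\, v^2,\, -\tfrac{\langle x,v\rangle}{\sqrt{1-|x|^2}}\Bigr).
\]
Squaring and summing the three components produces $|d\psi_x(v)|^2 = |v|^2 + \langle x,v\rangle^2/(1-|x|^2) = \bigl[(1-|x|^2)|v|^2 + \langle x,v\rangle^2\bigr]/(1-|x|^2)$. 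Dividing by $\psi^3(x)=\sqrt{1-|x|^2}$ yields precisely the Klein metric $\alpha_F(x,v) = \sqrt{(1-|x|^2)|v|^2+\langle x,v\rangle^2}/(1-|x|^2)$.

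For the $\beta_+$ piece the key observation is the sign interplay: the contribution $-d\psi^3_x(v)/\psi^3(x)$ has two negatives (one from the definition of $\beta_+$, one from the derivative of the square root) which combine to give $+\langle x,v\rangle/(1-|x|^2) = \beta_F(x,v)$. Adding the two contributions gives $\psi^*F_+(x,v) = \alpha_F(x,v)+\beta_F(x,v) = F_F(x,v)$, as desired.

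There is no real obstacle here; the verification is a routine chain-rule computation entirely parallel to the [FUH-$1$] case in Subsection \ref{Sec 3.2}. The only point requiring care is the sign bookkeeping in the $\beta$-term: had $\beta_+$ been defined with a $+$ sign one would obtain a Randers metric with reversed orientation (the ``reverse Funk'' metric), so it is worth remarking that the specific choice of sign in \eqref{eqn 3.5} is exactly what is needed to produce $F_F$ rather than its reversal.
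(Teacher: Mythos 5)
Your proposal is correct and follows exactly the route the paper takes; the paper's own proof simply cites \eqref{eqn 3.5} and \eqref{eqn 3.A4} and states the result, so your explicit computation of $d\psi_x(v)=\bigl(v^1, v^2, -\langle x,v\rangle/\sqrt{1-|x|^2}\bigr)$ and the sign bookkeeping in the $\beta_+$ term just supplies the details the paper leaves to the reader. All the intermediate identities you write down check out.
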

\begin{proof} In view of  \eqref{eqn 3.5} and \eqref{eqn 3.A4}, we have for all $ (x, v) \in T\mathbb{D}$,
\begin{equation*}
\psi^*F_+(x,v)=F(\psi(x), D\psi_x(v))=\frac{\sqrt{\left(1-|x|^2 \right) |v|^2+ \langle x ,  v \rangle ^2 }}{1-|x|^2}+\frac{ \langle x ,  v \rangle}{1-|x|^2}= F_F(x,v).
\end{equation*}
\end{proof}
\subsection{The realization of the Finsler-Poincar\'e  metric in the unit disc  on the upper hemi sphere \textbf{(FUS-\boldmath$2$)} }\label{Sec 3.7}
 If we consider the immersion $\sigma:\mathbb{D} \subset \mathbb{R}^2\rightarrow \mathbb{S}^2_+ \subset \mathbb{R}_+^3$ defined by,
\begin{equation}\label{eqn 3.A5}
\sigma(x^1, x^2)=\left( \frac{2x^1}{1+|x|^2},\frac{2x^2}{1+|x|^2},\frac{1-|x|^2}{1+|x|^2}\right),
\end{equation}
then the pullback $\sigma^*\alpha_+$ of the upper half hyperbolic space $(\mathbb{R}_+^3,\alpha_+)$ on  $\mathbb{D}$ is actually the well known Poincar\'e metric  $\alpha_P=\sigma^*\alpha_+$, given by \eqref{eqn 3.2}.

\vspace{.3cm}
\noindent
\begin{proposition}
 The pullback of the metric $F_+$ defined as above, on the upper hemi sphere by the map $\sigma$ is the realization of the Finsler-Poincar\'e  metric on the unit disc,  on the upper hemisphere, that is,  $\sigma^*F_+(x,v)= F_P(x,v)$ for all $(x, v) \in T\mathbb{D}$.
\end{proposition}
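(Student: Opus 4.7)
The plan is to exploit linearity of the pullback operation, writing
\begin{equation*}
\sigma^*F_+(x,v) = \sigma^*\alpha_+(x,v) + \sigma^*\beta_+(x,v),
\end{equation*}
and then to handle the two terms separately. The first term is essentially free: the text immediately preceding the statement already asserts that $\sigma^*\alpha_+ = \alpha_P$, since $\sigma$ is the standard stereographic embedding of the disc onto the upper hemisphere and is well known to pull back the upper half-space hyperbolic metric to the Poincar\'e metric on $\mathbb{D}$. So I would first quote that identification and move on. This reduces the entire problem to checking the single scalar identity
\begin{equation*}
\sigma^*\beta_+(x,v) = \beta_P(x,v) = \frac{4\langle x, v\rangle}{1-|x|^4}.
\end{equation*}

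Next I would carry out this one-form computation directly. By definition of $\beta_+$,
\begin{equation*}
\sigma^*\beta_+(x,v) = -\frac{(D\sigma_x(v))^3}{\sigma^3(x)} = -\frac{d\sigma^3_x(v)}{\sigma^3(x)}.
\end{equation*}
With $\sigma^3(x) = (1-|x|^2)/(1+|x|^2)$, a short differentiation using the quotient rule yields $d\sigma^3_x(v) = -4\langle x, v\rangle/(1+|x|^2)^2$, so that $-d\sigma^3_x(v)/\sigma^3(x) = 4\langle x, v\rangle/\bigl((1-|x|^2)(1+|x|^2)\bigr) = 4\langle x, v\rangle/(1-|x|^4)$, which is exactly $\beta_P(x,v)$. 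Equivalently, one may observe that $\beta_+ = -d(\log \tilde{x}^3)$ and $\beta_P = d\log\bigl((1+|x|^2)/(1-|x|^2)\bigr)$, so the verification reduces to the identity $-\log\sigma^3(x) = \log\bigl((1+|x|^2)/(1-|x|^2)\bigr)$, which is immediate from the formula for $\sigma^3$. This conceptual route is cleaner and makes the closed-ness of both forms visible without any calculation.

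Combining the two pieces gives $\sigma^*F_+(x,v) = \alpha_P(x,v) + \beta_P(x,v) = F_P(x,v)$ for all $(x,v) \in T\mathbb{D}$. There is no real obstacle here; the statement is parallel to the [FUS-$1$] proposition proved just above with the immersion $\psi$, and indeed the two proofs have exactly the same structure, the only difference being that one must now use the stereographic parametrization $\sigma$ in place of the graph parametrization $\psi$. The one place to be a little careful is the bookkeeping of signs and factors in $d\sigma^3$, since a mishandled $(1+|x|^2)^2$ in the denominator is the only way the identification $\sigma^*\beta_+ = \beta_P$ could fail at the level of computation.
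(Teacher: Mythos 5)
Your proof is correct, and it is a streamlined version of what the paper does. The paper's own proof computes all three differentials $d\sigma^1_x$, $d\sigma^2_x$, $d\sigma^3_x$ and substitutes them into the full expression for $\sigma^*F_+$, which in particular re-derives the Riemannian identity $\sigma^*\alpha_+=\alpha_P$ along the way. You instead quote that identity from the sentence immediately preceding the proposition and reduce the whole verification to the single one-form computation $\sigma^*\beta_+=-d\sigma^3_x/\sigma^3(x)=4\langle x,v\rangle/(1-|x|^4)=\beta_P$, which checks out: $d\sigma^3_x(v)=-4\langle x,v\rangle/(1+|x|^2)^2$ agrees with the paper's displayed formula for $d\sigma^3_x$. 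Your further observation that $\beta_+=-d\log\tilde{x}^3$ pulls back to $d\log\bigl((1+|x|^2)/(1-|x|^2)\bigr)=df_P=\beta_P$ by naturality of $d$ is a genuinely cleaner route for the one-form part, and it ties the computation to Remark 3.2, where the paper records $\beta_P=df_P$; what it buys is that exactness and closedness of both forms become visible with no coordinate work, at the cost of relying on the quoted fact $\sigma^*\alpha_+=\alpha_P$ rather than exhibiting it. Either way the conclusion $\sigma^*F_+=\alpha_P+\beta_P=F_P$ follows, so there is no gap.
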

\textbf{Proof:}
We have by \eqref{eqn 3.A5},
\begin{equation*}
d\sigma^1_x =\frac{2}{\left(1+|x|^2\right)^2 }\left[ \left\lbrace 1+|x|^2-2(x^1)^2 \right\rbrace dx^1-2x^1x^2dx^2\right],
\end{equation*}
\begin{equation*}
d\sigma^2_x =\frac{2}{\left(1+|x|^2\right)^2 }\left[- 2x^1x^2 dx^1+\left\lbrace 1+|x|^2-2(x^2)^2\right\rbrace \right] dx^2,
\end{equation*}
\begin{equation*}
d\sigma^3_x =\frac{2}{\left(1+|x|^2\right)^2 }\left[ -2x^1 dx^1 -2x^2 dx^2\right].
\end{equation*}
Therefore,
\begin{equation*}
\sigma^*F (x, v)=\frac{2|v|}{1-|x|^2}+\frac{4\langle x ,  v \rangle}{1-|x|^4}= F_P(x,v).
\end{equation*}

\vspace{.3cm}
\noindent
Thus, we have shown  that the pullback of this hyperbolic metric on the upper hemisphere $\mathbb{S}^2_+$ through two different immersions gives the well known Funk as well  as the well known Finsler-Poincar\'e metric on the open unit disc.

\section{The Geodesics in isometric models of Funk disc}
In this section,  we describe the geodesics of all the isometric models of the Funk disc explicitly,
described in Section $3$.  In view of Section $3$, we already know the geodesics as point sets in each model, however in this section the explicit parametrization of the geodesics will be explored. We also classify all the geodesic in each model.
We first begin with the Funk metric.
By Proposition \ref{ppn A3.10}, the line segments in a convex domain $\Omega$ with the Funk metric are the Funk geodesics  as point sets. 
The explicit parametrization of the unit speed geodesics in the convex domain $\Omega$ is obtained by Athanase and Troynov in Chapter 3 of \cite{HHG}.

\begin{proposition}[\cite{HHG}, Chapter 3, \boldmath$\S 3$]
Let $\Omega \subset \mathbb{{R}}^n$ be a convex domain with the weak Finsler structure $F_f$. Then the forward unit speed linear geodesic in $\Omega$ starting at $p \in \Omega$ with  velocity $v \in T_p\Omega$ is given by,  
 \begin{equation}{\label{eqn 3.41}}
     \gamma(t)=p+\frac{(1-e^{-t})}{F_f(p, v)} v.
 \end{equation}
 \end{proposition}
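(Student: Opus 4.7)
The plan is to parametrize the forward geodesic as a scalar motion along the Euclidean line segment from $p$ toward the boundary point $a = p + v/F_f(p,v)$, and then impose the unit speed condition to pin down the scalar function.

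First I would invoke Proposition \ref{ppn A3.10} (which generalizes to any convex domain with the weak Funk structure) to conclude that every Funk geodesic starting at $p$ in direction $v$ is, as a point set, contained in the Euclidean ray $p + \mathbb{R}_{\ge 0}\,v$, up to its exit point $a\in\partial\Omega$. Hence there exists a smooth nondecreasing scalar function $s(t)$ with $s(0)=0$ such that
\begin{equation*}
\gamma(t)=p+s(t)\,\frac{v}{|v|},\qquad \gamma'(t)=s'(t)\,\frac{v}{|v|}.
\end{equation*}

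Next I would use the key geometric observation that for any $t$ in the domain of $\gamma$, the forward ray $\overrightarrow{\gamma(t)\,\gamma'(t)}$ hits $\partial\Omega$ at the same boundary point $a$, because $\gamma(t)$ lies on the open segment $\overline{pa}$ and $\gamma'(t)$ is a positive multiple of $v$. Therefore by the defining formula $F_f(x,w)=|w|/|x-a|$,
\begin{equation*}
F_f\bigl(\gamma(t),\gamma'(t)\bigr)=\frac{s'(t)}{|a-\gamma(t)|}=\frac{s'(t)}{|a-p|-s(t)}.
\end{equation*}

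The unit speed condition $F_f(\gamma,\gamma')\equiv 1$ then becomes the first-order linear ODE $s'(t)=|a-p|-s(t)$ with $s(0)=0$, whose unique solution is $s(t)=|a-p|(1-e^{-t})$. Finally I would substitute the identity $|a-p|=|v|/F_f(p,v)$, which comes directly from $a=p+v/F_f(p,v)$ together with positive homogeneity of $F_f$, and rewrite $s(t)\,v/|v|$ to obtain the stated expression
\begin{equation*}
\gamma(t)=p+\frac{1-e^{-t}}{F_f(p,v)}\,v.
\end{equation*}

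There is no real obstacle here; the main point to be careful about is the observation that $a$ is independent of $t$, which is what keeps the Funk norm formula linear in the scalar $s$ and reduces the problem to a trivial ODE. A small bookkeeping remark would be that $\gamma(0)=p$ and $\gamma'(0)=v/F_f(p,v)$, so that $\gamma$ indeed starts at $p$ with unit Funk norm and with initial direction $v$, as required.
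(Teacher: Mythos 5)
Your argument is correct, and in fact the paper offers no proof of this statement at all: it is quoted verbatim from Papadopoulos--Troyanov (\cite{HHG}, Chapter 3), so there is nothing internal to compare against. Your reduction to the scalar ODE $s'(t)=|a-p|-s(t)$ via the observation that the exit point $a$ is constant along the trajectory is exactly the right mechanism, and the back-substitution $|a-p|=|v|/F_f(p,v)$ is clean. Two small remarks. First, your appeal to Proposition \ref{ppn A3.10} ``generalized to any convex domain'' is the one soft spot: that proposition is proved in the paper only for the unit disc, via the Randers decomposition $F_F=\alpha_F+\beta_F$ with $\beta_F$ closed, and that argument does not transfer to an arbitrary convex $\Omega$. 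You do not actually need it, though, since the statement only asks for the parametrization of the \emph{linear} geodesic; what you must supply instead is that the linear path is indeed a (minimizing) geodesic. Second, this last point is handled most cheaply not by your infinitesimal ODE but by the global identity
\begin{equation*}
d_{F,\Omega}\bigl(\gamma(s),\gamma(t)\bigr)=\log\frac{|\gamma(s)-a|}{|\gamma(t)-a|}=\log\frac{e^{-s}|p-a|}{e^{-t}|p-a|}=t-s,\qquad s<t,
\end{equation*}
which shows in one line that your $\gamma$ is a globally minimizing unit speed forward ray; your ODE computation is then the derivative of this identity. With that substitution the proof is complete and self-contained.
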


\begin{proposition}
    For the Funk unit disc, the unit speed geodesic $\gamma : [0, \infty) \rightarrow \mathbb{D}$ with $\gamma(0) = p$ and the forward hitting point $y \in \partial \mathbb{D}$ is given by, 
    \begin{equation}\label{fgeo}
        \gamma(t)=e^{-t} p + (1-e^{-t})y.
    \end{equation}
    
\end{proposition}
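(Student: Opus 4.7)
The plan is to specialize the general formula \eqref{eqn 3.41} to the unit disc $\mathbb{D}$ and to reinterpret the vector $v/F_F(p,v)$ geometrically in terms of the forward hitting point $y$. The whole proof should be a short computation once this identification is made.

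First I would recall from \eqref{eqn 3.200} (with $\Omega = \mathbb{D}$) that
\[
F_F(p,v) = \frac{|v|}{|p-y|},
\]
where $y \in \partial \mathbb{D}$ is precisely the forward hitting point, i.e.\ the intersection of the ray $\overrightarrow{pv}$ with $\partial \mathbb{D}$. Since $y$ lies on the ray from $p$ in the direction of $v$, we may write $y = p + \lambda \, v/|v|$ for some $\lambda > 0$, and then $|p - y| = \lambda$. Combining these gives $\lambda = |v|/F_F(p,v)$, hence the crucial identification
\[
\frac{v}{F_F(p,v)} \;=\; y - p.
\]

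Next I would substitute this identity into the general formula \eqref{eqn 3.41} from the preceding proposition. This yields
\[
\gamma(t) \;=\; p + (1 - e^{-t}) \, \frac{v}{F_F(p,v)} \;=\; p + (1 - e^{-t})(y - p) \;=\; e^{-t} p + (1 - e^{-t}) y,
\]
which is exactly \eqref{fgeo}. Since \eqref{eqn 3.41} is a forward unit-speed parametrization, no separate check of the speed is needed; it is inherited directly.

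There is essentially no obstacle here, as the argument is a one-line substitution once one recognizes that the definition of $F_F$ is designed precisely so that $v/F_F(p,v)$ is the Euclidean displacement from $p$ to the forward boundary point. The only point worth being careful about is that $y$ is the \emph{forward} hitting point (as opposed to the backward one $b$), which is what ensures $\lambda > 0$ and makes the parametrization run on $[0,\infty)$ with $\gamma(t) \to y$ as $t \to \infty$.
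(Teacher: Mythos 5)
Your proposal is correct and follows the same route as the paper: both identify the forward hitting point via $y = p + v/F_F(p,v)$ and substitute into the general parametrization \eqref{eqn 3.41}. Your version just spells out more explicitly why $v/F_F(p,v) = y - p$, which the paper takes as immediate from the definition of $F_F$.
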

\begin{proof}
   Consider the  particular case when $\Omega$ is open unit disc $\mathbb{D}\subset \mathbb{R}^2$. Recall that by Remark \ref{rem 3.03}, $F_f=F_F$. Let $y=(y^1, y^2) \in \partial \mathbb{D}$ be such that $ \left( p+\frac{v}{F_F(p,v)} \right)=y$. Then clearly, by \eqref{eqn 3.41} the forward unit speed Funk geodesic $\gamma : [0, \infty) \rightarrow \mathbb{D}$ with $\gamma(0) = p$ and hitting  at point $y \in \partial \mathbb{D}$  is given by (\ref{fgeo}).
   \end{proof}
   
\vspace{0.2in}
\noindent
Clearly, if $p=(p^1,p^2) \in \mathbb{D}$, $v=(v^1,v^2) \in T_p\mathbb{D}$,  then \eqref{fgeo} yields that, 
\begin{equation}\label{fgeo1}
    \gamma(t) =(\gamma_1(t), \gamma_2(t))=( e^{-t} (p^1-y^1)+y^1, e^{-t} (p^2-y^2)+y^2).
\end{equation}

\vspace{.2cm}
\noindent
Now using isometries obtained in the previous section, we classify the geodesics in various Funk model spaces.
\subsection{Geodesics in the Finsler-Poincar\'e  upper half plane \textbf{[FU]}}  
As pointed out in Remark  \ref{rem 3.04},  the geodesics of the Finsler-Poincar\'e upper half plane are pointwise same as  the Poincar\'e metric on the upper half plane. These geodesics are completely classified.
\begin{thm}
The geodesics of the Finsler-Poincar\'e  upper half plane are the vertical rays in the Finsler-Poincar\'e upper half plane as well as the semicircles centred on $x$-axis. More precisely,
\begin{enumerate}
\item[(i)] the line segments in the Funk disc passing through $(-1,0)$ correspond to the vertical rays in the upper half plane, 
\item[(ii)] the vertical line segments in the Funk disc correspond to the concentric semi circles centred at origin,
\item[] and 
\item[(iii)] the other line segments in the Funk disc correspond to the semicircles centred on $x$-axis.
\end{enumerate}
\end{thm}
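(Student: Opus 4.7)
The qualitative part of the statement --- that the geodesics of $[FU]$ are vertical rays or Euclidean semicircles centred on the $x$-axis --- follows immediately from the combination of Remark~\ref{rem 3.04} and Theorem~\ref{lem 3.1}. Indeed, Remark~\ref{rem 3.04} exhibits the deformation one-form $\beta_U$ as exact, so that $(\mathbb{U}, F_U)$ and $(\mathbb{U}, \alpha_U)$ are projectively equivalent; hence the $F_U$-geodesics trace out the same point sets as the classical Poincar\'e upper half-plane geodesics, which are precisely the vertical lines together with the upper semicircles centred on the real axis.

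To establish the sharper correspondence (i)--(iii) with line segments in $\mathbb{D}$, the strategy is to transport the Funk geodesics through the explicit isometry $g = \varphi \circ \xi:\mathbb{D} \to \mathbb{U}$ coming from the two diffeomorphisms $\xi$ and $\varphi$ constructed in Subsection~\ref{Sec 3.5}. Using the identities $\sin(\tan^{-1}s) = s/\sqrt{1+s^2}$ and $\cos(\tan^{-1}s) = 1/\sqrt{1+s^2}$ applied to the arctangent appearing in $\xi$, the composition simplifies to
\begin{equation*}
g(x^1, x^2) = \left( \frac{2x^2}{1+x^1},\; \frac{2\sqrt{1-|x|^2}}{1+x^1}\right),
\end{equation*}
which is the key formula to obtain. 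Since the Funk geodesics are line segments in $\mathbb{D}$ (Proposition~\ref{ppn A3.10}) and $g$ is a Finslerian isometry, the images of these segments under $g$ are exactly the $F_U$-geodesics.

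The classification (i)--(iii) is then a case check. For case (i), parametrize a line through $(-1,0)$ as $x^2 = m(x^1+1)$; substituting into the first coordinate of $g$ gives $g^1 = 2m$, a constant, so the image is a vertical ray. For case (ii), setting $x^1 = c$ constant and summing squares yields $(g^1)^2 + (g^2)^2 = 4(1-c)/(1+c)$, showing the image lies on a semicircle centred at the origin. For case (iii), writing a generic line as $x^2 = mx^1 + k$ with $k \neq m$, one inverts $g^1 = 2(mx^1+k)/(1+x^1)$ to express $(1-x^1)/(1+x^1)$ affinely in $g^1$, and completing the square in the relation $(g^1)^2 + (g^2)^2 = 4(1-x^1)/(1+x^1)$ produces a circle of the form $(g^1 - a)^2 + (g^2)^2 = r^2$ with centre $(a,0)$ on the $x$-axis.

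The conceptual content is entirely carried by the first paragraph; the main technical step, and the only place where a slip is likely, is the simplification of $g = \varphi \circ \xi$ into the compact closed form displayed above. Once that formula is in hand, the three cases reduce to short algebraic identities.
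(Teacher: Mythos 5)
Your proposal is correct and follows essentially the same route as the paper: transport the Funk line segments (Proposition \ref{ppn A3.10}) through the isometry $g(x)=\bigl(\tfrac{2x^2}{1+x^1},\tfrac{2\sqrt{1-|x|^2}}{1+x^1}\bigr)$ and run the same three-case algebra on $(g^1)^2+(g^2)^2=4\tfrac{1-x^1}{1+x^1}$, with matching circle equations in case (iii). The only (harmless, and arguably more self-contained) difference is that you obtain $g$ as the composition $\varphi\circ\xi$ of the maps from Subsection \ref{Sec 3.5} --- which does simplify to the displayed closed form --- whereas the paper quotes $g$ directly from \cite{AMAK}.
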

\begin{proof}
Let $g:\mathbb{D} \rightarrow \mathbb{U}$ be the isometry between  $(\mathbb{D}, F_F)$ and $(\mathbb{U}, F_U)$ given by (see  \cite{AMAK}, $\S 4$, Theorem 2),
\begin{equation*}
g(x)=\left( \frac{2x^2}{1+x^1}, \frac{2\sqrt{1-|x|^2}}{1+x^1}\right).
\end{equation*}
The geodesics in $(\mathbb{U}, F_U)$ are  $g$-isometric images of the geodesics $\gamma(t)=(\gamma_1(t), \gamma_2(t))$ given by \eqref{fgeo1}. Let $\omega(t)=g(\gamma(t))=(X^1(t),X^2(t))$. Then
\begin{equation*}
\omega(t)=\left( \frac{2\gamma_2(t)}{1+\gamma_1(t)}, \frac{2\sqrt{1-|\gamma(t)|^2}}{1+\gamma_1(t)}\right).
\end{equation*} 
Therefore, $\displaystyle X^1(t)=\frac{2\gamma_2(t)}{1+\gamma_1(t)}$ and $\displaystyle X^2(t)=\frac{2\sqrt{1-|\gamma(t)|^2}}{1+\gamma_1(t)}.$ It is easy to see that, 
\begin{equation}\label{2}
(X^1(t))^2+(X^2(t))^2=4\frac{1-\gamma_1(t)}{1+\gamma_1(t)}.
\end{equation}
For any line segments (non-vertical) in the Funk disc, we have $\gamma_2(t)=m \gamma_1(t)+c$, where $m$ is the slope of the line segment and $c$ is its y-intercept.
\begin{itemize}
 \item[(i)] For a line segment in the Funk disc passing through $(-1,0)$, $\gamma_2(t)$ and $1+\gamma_1(t)$ are in constant ratio and hence $m=c$. Therefore, $X^1(t)=2c~ (\text{constant})$. Hence, such line segments in the Funk disc correspond to the vertical rays in $\mathbb{U}$ through the isometry $g$.
 \item[(ii)] For the  vertical line segments in the Funk disc, $\gamma_1(t)=k$ (constant) with $|k|< 1$,  then by \eqref{2}, 
 \begin{equation*}
 (X^1(t))^2+(X^2(t))^2=c,~ \mbox{where}~ c=4\frac{1-k}{1+k}.
 \end{equation*}
This shows that the vertical line segments in the Funk disc correspond to the concentric semicircles centred at origin in the upper half plane.
  \item[(iii)] For the line segments in the Funk disc not passing through point $(-1,0)$, we have $m\neq c$, then  by \eqref{2},
 $$\left( X^1(t)+\frac{2}{m-c}\right)^2+(X^2(t))^2 =4 \frac{(m^2-c^2+1)}{(m-c)^2}.$$
  Hence, these line segments in the Funk disc correspond to the semicircles centred on $x$-axis in the upper half plane.
  \end{itemize}
\end{proof}

\subsection{Geodesics in the Finsler-Poincar\'e disc [FP]}
\begin{thm}
The geodesics of the Finsler-Poincar\'e disc $(\mathbb{D}, F_P)$  are diametric line segments  and  the circular 
arcs that intersect orthogonally to the boundary circle (in the Euclidean sense). More explicitly,
\begin{enumerate}
\item[(i)] the line segments in the Funk disc passing through 
the centre of the disc correspond to the diametric line segments in the Finsler-Poincar\'e disc passing through the centre of the disc,

\item[] and
\item[(ii)] the other line segments in the Funk disc correspond to the  circular arcs  that intersect orthogonally to the boundary circle.
\end{enumerate}
\end{thm}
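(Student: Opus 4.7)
The plan mirrors the previous theorem for [FU]: transport the parametrized Funk geodesics \eqref{fgeo1} via the isometry $f : (\mathbb{D}, F_F) \to (\mathbb{D}, F_P)$ established in \cite{AMAK}, $\S 3$. This isometry is the standard Klein-to-Poincar\'e identification $f(x) = x/(1+\sqrt{1-|x|^2})$, which is radial in the sense that $f(\lambda v) = \mu(\lambda) v$ for any unit vector $v$ and scalar $\lambda$. Setting $\omega(t) := f(\gamma(t))$ then produces the Finsler-Poincar\'e geodesic corresponding to the Funk geodesic $\gamma$, and it remains to analyse the shape of $\omega(t)$ in the two cases.

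For case (i), a Funk segment passing through the origin has the form $\gamma(t) = \lambda(t) v$ with $v \in \partial\mathbb{D}$; by the radial property of $f$, $\omega(t) = \mu(\lambda(t))v$ lies along the diameter of $\mathbb{D}$ in the direction of $v$. For case (ii), I would write $\omega(t) = (X^1(t), X^2(t))$ in coordinates, use $\gamma_2(t) = m \gamma_1(t)+c$ to eliminate $t$, and expect the resulting implicit equation to take the form
\begin{equation*}
(X^1 - a)^2 + (X^2 - b)^2 = r^2, \qquad a^2+b^2 = 1+r^2,
\end{equation*}
the second identity being the classical orthogonality condition for a circle meeting $\partial\mathbb{D}$ at right angles.

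A cleaner alternative is to invoke Remark \ref{rem 3.02}: since $F_P$ is projectively equivalent to $\alpha_P$, its geodesics coincide as point sets with those of the Poincar\'e disc, classically known to be diameters and circular arcs orthogonal to $\partial\mathbb{D}$. The classification (i) versus (ii) then reduces to observing that $f$ sends Funk segments through $0$ to diameters (by radiality) and all other Funk segments to non-diametric orthogonal arcs. The principal obstacle in the direct parametric approach is the algebraic elimination of $t$ in case (ii) together with the verification of $a^2+b^2 = 1+r^2$; the projective-equivalence shortcut circumvents most of this, leaving only the correspondence between Funk lines through/not through the origin and the two geodesic types, which is immediate from the radial structure of $f$.
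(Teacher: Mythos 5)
Your proposal matches the paper's proof: the paper transports the parametrized Funk segments through the same map $f(x)=x/(1+\sqrt{1-|x|^2})$, splits into the through-origin case ($c=0$, giving $X^2(t)=mX^1(t)$) and the case $c\neq 0$, and derives exactly the circle $\left(X^1+\frac{m}{c}\right)^2+\left(X^2-\frac{1}{c}\right)^2=\frac{1+m^2-c^2}{c^2}$ that you anticipate; your orthogonality identity $a^2+b^2=1+r^2$ does check out for this circle, since $\frac{m^2+1}{c^2}=1+\frac{1+m^2-c^2}{c^2}$. The only detail to add in execution is that vertical Funk segments $\gamma_1(t)=c$ are not captured by the slope-intercept form and require a separate computation, yielding $\left(X^1-\frac{1}{c}\right)^2+(X^2)^2=\frac{1-c^2}{c^2}$, which the paper records as a third subcase.
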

\begin{proof} Let $f:(\mathbb{D},F_F) \rightarrow (\mathbb{D},F_P),$ be the isometry between  $(\mathbb{D}, F_F)$ and $(\mathbb{D}, F_P)$, given by (see \cite{AMAK}, $\S 4$ , Theorem 1),

\begin{equation*}
 f(x)=\frac{x}{1+\sqrt{1-|x|^2}}.
\end{equation*}
Thus, the geodesics in $(\mathbb{D}, F_P)$ are $\omega(t)=f(\gamma(t))=(X^1(t),X^2(t))$, where $\gamma(t)=(\gamma_1(t), \gamma_2(t))$ is given by \eqref{fgeo1}. Hence, we obtain,
\begin{equation*}
\omega(t)=\frac{\gamma(t)}{1+\sqrt{1-|\gamma(t)|^2}}=\left( \frac{\gamma_1(t)}{1+\sqrt{1-|\gamma(t)|^2}},\frac{\gamma_2(t)}{1+\sqrt{1-|\gamma(t)|^2}}\right).
\end{equation*}
Then, 
\begin{equation*}
X^1(t)=\frac{\gamma_1(t)}{1+\sqrt{1-|\gamma(t)|^2}} \qquad \mbox{and} \qquad X^2(t)=\frac{\gamma_2(t)}{1+\sqrt{1-|\gamma(t)|^2}}.
\end{equation*}

\vspace{.3cm}
\noindent
The line segments $\gamma(t)$, represented as $\gamma(t) =(\gamma_1(t),\gamma_2(t))$, in the Funk disc can be written as $\gamma_2(t)=m \gamma_1(t)+c$. 
\begin{itemize}
\item[(i)] If $c=0$, then $\gamma_2(t)=m \gamma_1(t)$. In this case, $X^2(t)=m X^1(t).$ That is the line segments (geodesics) in the Funk disc passing through the centre of the disc correspond to the line segments in the Finsler-Poincar\'e disc passing through the centre of the disc.
\item[(ii)] If $\gamma_2(t)=m \gamma_1(t)+c$, for some real numbers $m, c$ with $c\neq 0$, then we obtain,
\begin{equation}\label{eqnn 3.4.21}
  \left(X^1(t)+ \frac{m}{c}\right)^2 +\left(X^2(t)-\frac{1}{c} \right)^2 =\left(\frac{1+m^2-c^2}{c^2} \right).  
\end{equation} 
 \item[(iii)] If $\gamma_1(t)=0$, then $X^1(t)=0$. Also, if $\gamma_1(t)=c \neq 0, |c| < 1$, then 
 \begin{equation}\label{eqnn 3.4.22}
     \left(X^1(t)- \frac{1}{c}\right)^2 +\left(X^2(t)\right)^2 =\left(\frac{1-c^2}{c^2} \right).
 \end{equation}
\end{itemize}
Clearly, the arcs of the circles given by \eqref{eqnn 3.4.21} and \eqref{eqnn 3.4.22} intersect orthogonally to the boundary (in the Euclidean sense) of the unit disc.
\end{proof}

\subsection{Geodesics in the Finsler-band model [FB]}
\begin{thm}
The geodesic $\omega(t)=\left( X^1(t),X^2(t) \right)$, 
of the Finsler-band model $(\mathbb{B}, F_B)$ satisfy,
\begin{equation}\label{eqnn 3.1}
    4e^{X^1(t)}\sin X^2(t)=\pm \Big[m(4-e^{2X^1(t)})+c(4+e^{2X^1(t)})\Big] ,
\end{equation}
\noindent
where  $m$ and $c$ are arbitrary real numbers.
\end{thm}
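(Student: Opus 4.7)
The plan is to transport the already-classified Funk geodesics through the isometry $\xi:(\mathbb{D},F_F)\to(\mathbb{B},F_B)$ constructed in Subsection \ref{Sec 3.5}. Since $\xi$ is a Finslerian isometry, $\omega(t):=\xi(\gamma(t))$ is a unit speed geodesic of $(\mathbb{B},F_B)$ whenever $\gamma$ is a unit speed Funk geodesic; conversely, every geodesic of $(\mathbb{B},F_B)$ arises this way. By Proposition \ref{ppn A3.10} and formula \eqref{fgeo1}, the image $\gamma(t)=(\gamma_1(t),\gamma_2(t))$ traces out a line segment in $\mathbb{D}$, so up to the degenerate case of vertical segments we may write $\gamma_2(t)=m\gamma_1(t)+c$ for some $m,c\in\mathbb{R}$.

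The key computational step is to invert the relation $\omega=\xi(\gamma)$ using the explicit formula for $\xi^{-1}$ displayed just after \eqref{eqn 3.A7}. This immediately gives
\begin{equation*}
\gamma_1(t)=\frac{4-e^{2X^1(t)}}{4+e^{2X^1(t)}},\qquad \gamma_2(t)=\frac{-4e^{X^1(t)}\sin X^2(t)}{4+e^{2X^1(t)}}.
\end{equation*}
Substituting these expressions into the line equation $\gamma_2(t)=m\gamma_1(t)+c$ and clearing the common denominator $4+e^{2X^1(t)}$ will yield
\begin{equation*}
-4e^{X^1(t)}\sin X^2(t)=m\bigl(4-e^{2X^1(t)}\bigr)+c\bigl(4+e^{2X^1(t)}\bigr),
\end{equation*}
which is \eqref{eqnn 3.1} with the minus sign; the $\pm$ in the statement is absorbed by the obvious replacement $(m,c)\mapsto(-m,-c)$, which still ranges over all of $\mathbb{R}^2$.

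Finally, I would dispose of the vertical case separately: if $\gamma_1(t)\equiv k$ with $|k|<1$, then from the expression for $\gamma_1$ above we obtain $e^{2X^1(t)}\equiv 4(1-k)/(1+k)$, so $X^1(t)$ is constant, which is easily seen to be a solution of \eqref{eqnn 3.1} (indeed, in the limit $|m|\to\infty$ one recovers $4-e^{2X^1(t)}=0$-type level sets, but more directly one simply checks that the vertical Funk segment through $(k,0)$ is sent by $\xi$ to a curve along which $X^1$ is constant, which trivially satisfies the stated algebraic relation after rearranging parameters).

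I do not expect any real obstacle: the proof is essentially an application of the isometry $\xi$ combined with a direct algebraic substitution. The only subtlety is ensuring that the handful of degenerate lines in $\mathbb{D}$ (vertical segments and segments through the boundary point corresponding to an asymptote of $\xi^{-1}$) are correctly covered by the stated equation, which they are once one allows $m,c$ to range freely and interprets limiting values appropriately.
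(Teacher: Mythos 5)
Your proposal is correct and follows essentially the same route as the paper: both transport the Funk line segments $\gamma_2(t)=m\gamma_1(t)+c$ through the isometry $\xi$ of Subsection \ref{Sec 3.5} and obtain \eqref{eqnn 3.1} by algebraic substitution, with the vertical segments treated separately as curves of constant $X^1$. The only (cosmetic) difference is that you substitute via the explicit $\xi^{-1}$ to write $\gamma_1,\gamma_2$ in terms of $X^1,X^2$, whereas the paper writes $X^1,X^2$ in terms of $\gamma_1,\gamma_2$ and simplifies; your direction makes the algebra slightly cleaner but is the same argument.
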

\begin{proof} The Funk-disc model $(\mathbb{D}, F_F)$ and the Finsler-band model $(\mathbb{B}, F_B)$ are isometric to each other via the map $\xi:\mathbb{D} \rightarrow \mathbb{B}$ is given by \eqref{eqn 3.A7}. Therefore, if $\gamma(t)=(\gamma_1(t), \gamma_2(t))$ given by \eqref{fgeo1}, we obtain, 
\begin{equation*}
 \xi(\gamma(t)) =\left( \log \left( 2\sqrt{\frac{1-\gamma_1(t)}{1+\gamma_1(t)}}\right) , -\tan^{-1} \left( \frac{\gamma_2(t)}{\sqrt{1-|\gamma(t)|^2}}\right) \right).
\end{equation*} 
Let $\omega(t)=\xi(\gamma(t))=(X^1(t),X^2(t))$, then
\begin{equation}\label{eq:band}
 X^1(t)= \log \left( 2\sqrt{\frac{1-\gamma_1(t)}{1+\gamma_1(t)}}\right) \qquad \mbox{and} \qquad X^2(t)= -\tan^{-1} \left( \frac{\gamma_2(t)}{\sqrt{1-|\gamma(t)|^2}}\right).
\end{equation}
Clearly, if $\gamma_1(t)$ is constant, then $X^1(t)$ is constant, that is, the isometric images of the vertical line segments in the Funk disc are the vertical line segments in the band model. If $\gamma$ is a non-vertical line segment in $\mathbb{D}$, then its coordinates can be written as
 $\gamma_2(t)=m \gamma_1(t)+c$. Using this expression in (\ref{eq:band}), the coordinates $(X^1(t),X^2(t))$ of the geodesics in the band model satisfy \eqref{eqnn 3.1}.
\end{proof}\\

\noindent
Some of the curves given by above equation are  drawn in the Figure $2$ (this figure is drawn with the help of Python).

\begin{figure}[ht]
\begin{center}
 \includegraphics[height=5cm,width=10.0cm]{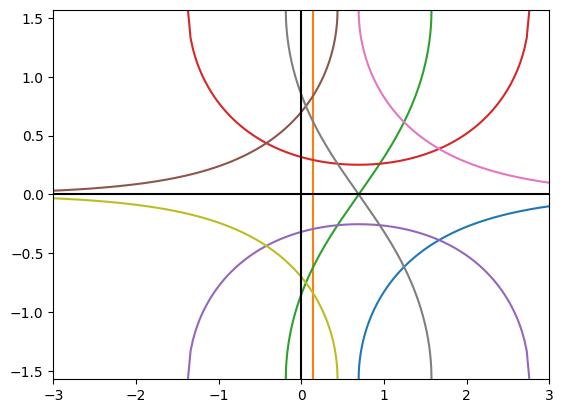}
 \end{center}
 \caption{Some geodesics in the band model.}
 \end{figure}

\section{The Geodesics of the Hilbert disc are the geodesics of the Beltrami Klein model} Note that the Hilbert metric on the unit disc  $\mathbb{D}$ is the well known Riemannian Beltrami-Klein metric.  In this section,
we find the parameterization of the Klein geodesics completely.

\vspace{.3cm}
\noindent
As stated in Section $2$, the Hilbert metric [\cite{HHG}, chapter 3, $\S 4$] is the arithmetic symmetrization of the Funk metric. Thus, for $x \in \mathbb{D}$ and $v \in T_x\mathbb{D}$,
 \begin{equation}
     F_H(x,v)=\frac{1}{2}\lbrace F_F(x,v)+F_F(x,-v)\rbrace=\frac{\sqrt{\left(1-|x|^2 \right) |v|^2+ \langle x ,  v \rangle ^2 } }{1-|x|^2}.
 \end{equation}
 The unit speed geodesic in a convex domain $\Omega$ with  Hilbert metric is explicitly described by Athanase and Troynov [\cite{HHG}, chapter 3, $\S 4$]. 
 \begin{proposition}[\cite{HHG}, Chapter 3, \boldmath$\S 4$]{\label{ppn 3.5.1}} The unit speed linear geodesic of the Hilbert metric starting at $p\in \Omega$ with  velocity $v \in T_p\Omega$ is the path, 
 \begin{equation}{\label{eqn 3.421}}
\beta(t)=p+\varphi(t)v, ~\mbox{where}~~   
\varphi(t)=\frac{e^t-e^{-t}}{F_F(p,v)e^t+F_F(p,-v)e^{-t}}.
 \end{equation}
\end{proposition}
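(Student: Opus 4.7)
The plan is to reduce the claim to a simple separable ordinary differential equation by invoking, from the preliminaries, the fact that line segments in a proper convex domain $\Omega$ are minimizing for the Hilbert distance $d_{H,\Omega}$. Consequently, the unit-speed Hilbert geodesic starting at $p$ with initial direction $v$ must remain on the Euclidean line $\{p+sv:s\in\mathbb{R}\}\cap\Omega$, so it has the form $\beta(t)=p+\varphi(t)v$ for some smooth $\varphi$ with $\varphi(0)=0$ and $\varphi'(0)>0$. The task therefore reduces to identifying $\varphi$.

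First I set $a=\overrightarrow{pv}\cap\partial\Omega$ and $b=\overleftarrow{pv}\cap\partial\Omega$, write $a=p+s_av$, $b=p-s_bv$ with $s_a,s_b>0$, and note from \eqref{eqn 3.200} that $F_F(p,v)=1/s_a$ and $F_F(p,-v)=1/s_b$. Since every point of $\beta$ lies on the same chord, the same forward and backward boundary hitting points serve $\beta(t)$, and a one-line computation yields
$$F_F(\beta(t),v)=\frac{1}{s_a-\varphi(t)},\qquad F_F(\beta(t),-v)=\frac{1}{s_b+\varphi(t)}.$$
Using the relation $2F_H=F_F(\cdot,v)+F_F(\cdot,-v)$ together with the symmetry of $F_H$, the unit-speed condition $F_H(\beta(t),\beta'(t))=1$ becomes $\varphi'(t)F_H(\beta(t),v)=1$, which simplifies to the separable ODE
$$\frac{d\varphi}{(s_a-\varphi)(s_b+\varphi)}=\frac{2\,dt}{s_a+s_b}.$$

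Next I integrate by partial fractions, using the initial condition $\varphi(0)=0$, to obtain
$$\ln\frac{s_a(s_b+\varphi)}{s_b(s_a-\varphi)}=2t,$$
and then solve algebraically for $\varphi$ to get
$$\varphi(t)=\frac{s_as_b(e^t-e^{-t})}{s_ae^{-t}+s_be^t}.$$
Finally, substituting $s_a=1/F_F(p,v)$ and $s_b=1/F_F(p,-v)$ and clearing the common factor $s_as_b$ produces exactly the stated expression for $\varphi(t)$.

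The only substantive step is the partial-fractions integration together with the short algebraic manipulation that rewrites $s_a,s_b$ in terms of the Funk values $F_F(p,\pm v)$; the reduction to an ODE itself is forced by the fact that Hilbert geodesics are straight lines, and the ingredients $F_F(\beta(t),\pm v)$ come directly from the definition \eqref{eqn 3.200}. I expect no genuine obstacle beyond this routine calculation.
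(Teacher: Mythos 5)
Your proof is correct. Note that the paper itself offers no proof of this proposition --- it is quoted verbatim from [HHG, Ch.~3, \S 4] --- so there is nothing internal to compare against; what you have written is a legitimate, self-contained derivation. The reduction to $\beta(t)=p+\varphi(t)v$ is harmless here because the statement concerns the \emph{linear} geodesic by hypothesis, so one only needs to find the unit-speed parametrization of the chord; the identities $F_F(\beta(t),v)=(s_a-\varphi(t))^{-1}$ and $F_F(\beta(t),-v)=(s_b+\varphi(t))^{-1}$ follow directly from \eqref{eqn 3.200} since all points of $\beta$ share the boundary points $a=p+s_av$ and $b=p-s_bv$, and the separable ODE
$\varphi'(t)\,F_H(\beta(t),v)=1$, its partial-fraction integration, and the back-substitution $s_a=1/F_F(p,v)$, $s_b=1/F_F(p,-v)$ all check out (one only needs positive homogeneity of $F_H$ and $\varphi'>0$, not symmetry, to pull $\varphi'(t)$ out). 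The same method applied to $F_F$ in place of $F_H$ gives $\varphi'(t)=s_a-\varphi(t)$ and hence the Funk parametrization \eqref{eqn 3.41}, which the paper likewise cites without proof, so your argument in fact covers both quoted propositions at once.
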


\noindent

\begin{proposition}\label{lem 3.A0}
Let $\beta$ be the unit speed geodesic (line segment) in the Hilbert disc $\mathbb{D}$ with $\beta(0)=p$ and suppose that $\beta$ hits at $y \in \partial \mathbb{D}$ in the forward direction. Then $\beta$  is given by,
 \begin{equation}\label{eqn 3.4.21}
 \beta(t)=(1-s(t))p+s(t) y,
 \end{equation}
 where  $\displaystyle s(t)=\frac{e^t-e^{-t}}{e^t+k(p,y) e^{-t}}$\; and  \; $\displaystyle k(p,y)=\frac{|y-p|^2}{|y|^2-|p|^2}$.  
\end{proposition}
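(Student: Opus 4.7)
The plan is to apply Proposition \ref{ppn 3.5.1} with a carefully chosen initial velocity $v$ and to compute the two Funk norms $F_F(p,v)$ and $F_F(p,-v)$ explicitly, exploiting the fact that $y \in \partial \mathbb{D}$ lies on the boundary circle.

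First I would take $v := y - p$ as the initial velocity vector, which points along the line segment from $p$ toward the forward hitting point $y$. Since $y$ is exactly the boundary intersection of the forward ray from $p$ in direction $v$, the defining property $y = p + v/F_F(p,v)$ forces $F_F(p, y - p) = 1$. This handles the first Funk norm in the formula for $\varphi(t)$ appearing in Proposition \ref{ppn 3.5.1}.

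Next I would compute $F_F(p, p - y)$ using the Randers decomposition
\begin{equation*}
F_F(p,w) = \frac{\sqrt{(1-|p|^2)|w|^2 + \langle p, w\rangle^2} + \langle p, w\rangle}{1-|p|^2}.
\end{equation*}
Applied to $w = y - p$, the fact that $F_F(p, y-p) = 1$ yields the identity $\sqrt{(1-|p|^2)|y-p|^2 + \langle p, y-p\rangle^2} = 1 - \langle p, y\rangle$, using $\langle p, y-p\rangle = \langle p, y\rangle - |p|^2$. Replacing $w$ by $-w = p - y$ then gives
\begin{equation*}
F_F(p, p-y) = \frac{(1 - \langle p, y\rangle) - (\langle p, y\rangle - |p|^2)}{1 - |p|^2} = \frac{1 - 2\langle p, y\rangle + |p|^2}{1 - |p|^2}.
\end{equation*}
At this point the key simplification is to recognize that $|y| = 1$ (since $y \in \partial \mathbb{D}$), so the numerator equals $|y|^2 - 2\langle p, y\rangle + |p|^2 = |y-p|^2$ and the denominator equals $|y|^2 - |p|^2$. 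Therefore $F_F(p, p-y) = k(p,y)$.

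Finally, substituting $F_F(p, v) = 1$ and $F_F(p, -v) = k(p, y)$ into the formula $\varphi(t) = (e^t - e^{-t})/(F_F(p,v) e^t + F_F(p,-v) e^{-t})$ from Proposition \ref{ppn 3.5.1} gives $\varphi(t) = s(t)$, and therefore $\beta(t) = p + s(t)(y-p) = (1-s(t))p + s(t) y$, which is the claimed parametrization. The only nontrivial step in the whole argument is the simplification of $F_F(p, p-y)$; once one sees that $|y|=1$ collapses the numerator into $|y-p|^2$, the result follows immediately.
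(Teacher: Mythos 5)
Your argument is correct, and it shares its skeleton with the paper's proof: both start from Proposition \ref{ppn 3.5.1} and reduce the problem to identifying the two Funk norms $F_F(p,\pm v)$ in the denominator of $\varphi(t)$. Where you differ is in how the key ratio $F_F(p,-v)/F_F(p,v)=k(p,y)$ is obtained. The paper keeps $v$ unnormalized, writes down explicit coordinates for the \emph{backward} hitting point $y'=\overleftarrow{py}\cap\partial\mathbb{D}$, and extracts the ratio by comparing the two relations $p+v/F_F(p,v)=y$ and $p-v/F_F(p,-v)=y'$ componentwise; this is a purely geometric/coordinate computation (and the formula for $y'$ is asserted rather than derived). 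You instead normalize $v=y-p$ so that $F_F(p,v)=1$, and then evaluate $F_F(p,p-y)$ directly from the Randers decomposition \eqref{eqn 3.A1}, using the identity $\sqrt{(1-|p|^2)|y-p|^2+\langle p,y-p\rangle^2}=1-\langle p,y\rangle$ (valid since $\langle p,y\rangle\le|p|<1$) and the observation that $|y|=1$ collapses $1-2\langle p,y\rangle+|p|^2$ into $|y-p|^2$. Your route avoids computing $y'$ altogether and replaces the paper's unproved coordinate formula with a two-line algebraic identity, which is arguably cleaner; the paper's route has the minor advantage of exhibiting the second boundary point explicitly, which makes the symmetry between the forward and backward Funk norms geometrically visible. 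Both are complete proofs of the statement.
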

\begin{proof} For  $p=(p^1, p^2)\in \mathbb{D}$ and $v  =(v^1,v^2) \in T_p\mathbb{D}$, the unit speed  Hilbert geodesic in  $\mathbb{D}$ is
(Proposition \ref{ppn 3.5.1}),
\begin{equation}\label{eqn 3.A8}
 \beta(t)=p+\frac{e^t-e^{-t}}{F_F(p,v)e^t+F_F(p,-v)e^{-t}}v.
 \end{equation}
 In order to simplify the second term on R.H.S. of  \eqref{eqn 3.A8}, we proceed as follows:\\
 Let $y'\in \partial\mathbb{D}$ be the unique hitting point of $\beta(t)$, different from $y=(y^1,y^2)\in \partial\mathbb{D}$. \\
 It is easy to see that the coordinates of $y'=(y'^1,y'^2)$ are given by:
 	$$(y'^1,y'^2)=\left( y^1+ \frac{2(1-\langle p , y\rangle )}{1+|p|^2-2 \langle p , y \rangle }(p^1-y^1),y^2+\frac{2(1-\langle p , y\rangle )}{1+|p|^2-2 \langle p , y \rangle }(p^2-y^2)\right).$$ 
 Now we find a relation between $F_F(p,v)$ and $F_F(p,-v)$.  \\
 	As $p+\frac{v}{F_F(p,v)}=y$, we get
 	\begin{equation}\label{eqn 3.A9}
 	p^1+\frac{v^1}{F_F(p,v)}=y^1 ~~ \mbox{and}~~ p^2+\frac{v^2}{F_F(p,v)}=y^2.
 	\end{equation}
 	Also we have, $p-\frac{v}{F_F(p,-v)}=y'$ which gives,
 	\begin{equation}\label{eqn 3.A10}
 	\begin{split}
 	p^1-\frac{v^1}{F_F(p,-v)}=y^1+ \frac{2(1-\langle p , y\rangle )}{1+|p|^2-2 \langle p , y \rangle }(p^1-y^1), \\~~ \mbox{and}~~ p^2-\frac{v^2}{F_F(p,-v)}= y^2+\frac{2(1-\langle p , y\rangle )}{1+|p|^2-2 \langle p , y \rangle }(p^2-y^2).
 	\end{split}
 	\end{equation}
 	From \eqref{eqn 3.A9} and \eqref{eqn 3.A10}, we have
 	\begin{equation*}
 	\frac{1-|p|^2}{1+|p|^2-2 \langle p , y \rangle }=\frac{F_F(p,v)}{F_F(p,-v)}.
 	\end{equation*}
 	Thus,
 	\begin{equation}\label{eqn 3.A12}
 	F_F(p,-v)=k(p, y) F_F(p,v), 
 	\end{equation}
 	where 
 $\displaystyle k(p,y)=\frac{1+|p|^2-2 \langle p , y \rangle }{1-|p|^2}=\frac{|y-p|^2}{|y|^2-|p|^2}.$\\
 \noindent	
Using \eqref{eqn 3.A12} in \eqref{eqn 3.A8} yields,
 \begin{equation}
 \beta(t)=\frac{(1+k(p,y))e^{-t}}{e^t+k(p,y) e^{-t}}p+\frac{e^t-e^{-t}}{e^t+k(p,y) e^{-t}}y.
 \end{equation}
 Thus, the unit speed geodesic $\beta(t)$ with $\beta(0)=p$ and hitting the boundary of the disc at $y$ is,
 \begin{equation*}
 \beta(t)=(1-s(t))p+s(t) y,
 \end{equation*}
 where $\displaystyle s(t)=\frac{e^t-e^{-t}}{e^t+k(p,y) e^{-t}}$.
 \end{proof}
 
\begin{rem}
In the next section, we calculate the forward Busemann function for the Funk disc and the Busemann function for the Hilbert disc explicitly. For this purpose, the form of the Funk and the Hilbert geodesics obtained  in terms of the initial and the hitting boundary point turns out to be more useful.
\end{rem}

\section{The Busemann function and the horocycles in the Funk and the Hilbert disc $\mathbb{D}$}
 The Busemann function for a geodesic $\gamma$ in a Riemannian or in a Finsler manifold can be interpreted as the distance function from ``$\gamma(\infty)$". The Busemann functions play a vital role in  
 studying the geometry of the underlying manifold. See eg., \cite{PDA}, \cite{MK}, \cite{RS}, \cite{KSH}, \cite{KSHI}, \cite{HIT},   for the insightful discussions about the Busemann functions in both complete Riemannian and Finslerian manifolds. \\\\
 In this section,  we first calculate the forward Busemann function in the Funk disc, and then in the Hilbert  disc.
We first recall the definition of the forward Busemann function in the context of Finsler geometry. To define the
Busemann function we need {\it line} and {\it ray} in 
a Finsler manifold. For details see Section $3$ of \cite{SOH}. In the sequel, $(M,F)$ denotes a forward complete, non-compact Finsler manifold without boundary.
\begin{defn}
[\textbf{Forward Ray}]
 A unit speed forward geodesic 
 $\gamma :[0,\infty) \rightarrow M$ with $\gamma(0)=p $, 
 $\dot{ \gamma}(0)=v $ is called a \textit{forward ray}, if 
 $d_F(\gamma(s),\gamma(t))=t-s$, for all $s,t\in [0,\infty)$ with $s < t$. Thus, $\gamma$ is a globally minimizing forward geodesic.
\end{defn}

 \noindent
 Similarly, one can analogously define a {\it backward ray}. 
 \begin{defn}
  [\textbf{Line}]  
 A unit speed geodesic $\gamma :(-\infty,\infty)  \rightarrow M$ with $\gamma(0)=p$, $\dot{ \gamma}(0)=v$, is a \textit{line}, if   $d_F(\gamma(s),\gamma(t))=t-s,$ for all $s,t\in (-\infty,\infty)$ with $ s < t$.
Thus, $\gamma$ is a globally minimizing  geodesic.
 \end{defn}

\vspace{.3cm}
\noindent
Associated to a forward ray $\gamma$, we consider the generalized distance function on  $(M, F)$ given by,
$b_{\gamma,t} : M \rightarrow \mathbb{R}$,
\begin{equation}
b_{\gamma,t}(x)=t-d_F(x, \gamma(t)),
\end{equation}
where $d_F$ is the Finsler asymmetric distance. It follows from the triangle inequality that, $b_ {\gamma,t} (x)$  is monotonically increasing function with $t$ and $-d_F(x,\gamma(0)) < b_\gamma(x) < d_F(x, \gamma(0))$.
\begin{defn}
[\textbf{The forward  Busemann function}] The \textit{forward  Busemann function} for the forward geodesic ray $\gamma$ is defined as:
\begin{equation}{\label{eqn 3.6.0}}
  b_\gamma (x):= \lim\limits_{t \rightarrow \infty}\left\lbrace t-d_F(x,\gamma(t) )\right\rbrace.
\end{equation}
By the above discussion, the limit exist and therefore, $b_\gamma$ is well defined.
\end{defn}

\begin{rem}
Note that in contrast to the Riemannian case, we   \textit{can't} define the Busemann function in a forward complete Finsler manifold as:
\begin{equation}\label{RD}
b_\gamma(x):=\lim\limits_{t \rightarrow \infty}b_\gamma(t)=\lim\limits_{t \rightarrow \infty}\left\lbrace d_F(x,\gamma(t))-t\right\rbrace.
\end{equation} 

\noindent
See Chapter 9, $\S 3.4$ of \cite{PP}, for the detailed definition of Busemann function in Riemannian manifold. In general, for simply connected, complete Riemannian manifold without conjugate points, the Busemann functions are distance functions, that is  $|\nabla b_\gamma| \equiv 1$ (see Proposition $1$ of \cite{JHE}).
The definition (\ref{RD}) leads to $b_\gamma(\gamma(t))=-t$ and in turn ${\nabla} b_\gamma(\gamma(t))=-\dot{ \gamma}(t)$. \\
In case of Finsler manifolds, we expect the Busemann functions to be a distance function (Definition \ref{def 3.A30}). 
And  therefore, $\overrightarrow{\nabla}b_\gamma(\gamma(t))= -\dot{ \gamma}(t)$  may not be a unit vector, as desired unless the space is reversible. Hence, for Finsler manifolds it is appropriate to define the Busemann function by (\ref{eqn 3.6.0}).
\end{rem}

 \subsection{The forward Busemann function on the Funk disc}
 \begin{thm}
  Let $\gamma$ be the forward unit speed geodesic in $(\mathbb{D}, F_F)$ with initial point $p \in \mathbb{D}$  and the forward hitting point at $y\in \partial \mathbb{D}$. Then the forward Busemann function for $\gamma$ on $(\mathbb{D}, F_F)$ is given by, 
  \begin{equation}\label{eqn 3.A46}
  b_\gamma(x)=\ln\frac{1-\langle p , y\rangle }{1-\langle x , y\rangle}.
  \end{equation}
 \end{thm}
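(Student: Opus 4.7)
The plan is to compute the limit
\begin{equation*}
b_\gamma(x)=\lim_{t\to\infty}\bigl[t-d_F(x,\gamma(t))\bigr]
\end{equation*}
directly from the elementary cross-ratio formula $d_F(u,z)=\log(|u-a|/|z-a|)$, where $a$ is the intersection of the ray $\overrightarrow{uz}$ with $\partial\mathbb{D}$. By \eqref{fgeo} the geodesic has the explicit form $\gamma(t)=e^{-t}p+(1-e^{-t})y$, so $\gamma(t)\to y\in\partial\mathbb{D}$ as $t\to\infty$, and the boundary hitting point for the Funk ray from $p$ through $\gamma(t)$ is $y$ itself. This last observation lets me replace $t$ by $d_F(p,\gamma(t))=\log(|p-y|/|\gamma(t)-y|)$ and consolidate into a single logarithm:
\begin{equation*}
t-d_F(x,\gamma(t))=\log\frac{|p-y|\,|\gamma(t)-a_t|}{|\gamma(t)-y|\,|x-a_t|},
\end{equation*}
where $a_t$ denotes the intersection of $\overrightarrow{x\gamma(t)}$ with $\partial\mathbb{D}$.

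Because $\gamma(t)\to y$ and $y\in\partial\mathbb{D}$, the point $a_t\to y$, so $|x-a_t|\to|x-y|$ and no further work is needed for this factor. The substantive task is the asymptotics of $|\gamma(t)-a_t|/|\gamma(t)-y|$, where both numerator and denominator tend to zero. I would set $\epsilon=e^{-t}$, parametrise the ray from $x$ through $\gamma(t)$ as $x+s\bigl[(y-x)+\epsilon(p-y)\bigr]$, and solve the quadratic $|x+sw|^2=1$ for the positive root $s_t$ producing $a_t$. At $\epsilon=0$ the ray reduces to the segment $\overline{xy}$, which meets $\partial\mathbb{D}$ at $y$ itself, giving $s_0=1$. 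The implicit function theorem, combined with $|y|^2=1$, then yields
\begin{equation*}
\left.\frac{ds_t}{d\epsilon}\right|_{\epsilon=0}=\frac{1-\langle y,p\rangle}{1-\langle y,x\rangle}.
\end{equation*}
Since $|\gamma(t)-a_t|=(s_t-1)\bigl|(y-x)+\epsilon(p-y)\bigr|$ and $|\gamma(t)-y|=\epsilon|p-y|$, this expansion gives
\begin{equation*}
\lim_{t\to\infty}\frac{|\gamma(t)-a_t|}{|\gamma(t)-y|}=\frac{|x-y|}{|p-y|}\cdot\frac{1-\langle p,y\rangle}{1-\langle x,y\rangle}.
\end{equation*}

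Finally, substituting this limit back into the consolidated expression, the factors $|p-y|$ and $|x-y|$ cancel against $|p-y|$ in the numerator and the limiting $|x-a_t|=|x-y|$ in the denominator, collapsing the whole expression to $\log\frac{1-\langle p,y\rangle}{1-\langle x,y\rangle}$, which is the claimed formula \eqref{eqn 3.A46}. The main obstacle is the first-order expansion of $s_t$: one must be careful that the ray's direction vector depends on $\epsilon$ as well, and the crucial simplification only appears after invoking $|y|=1$ to rewrite $\langle y,y-x\rangle=1-\langle y,x\rangle$ and $\langle y,p-y\rangle=\langle y,p\rangle-1$. Once that is in hand, everything else is bookkeeping.
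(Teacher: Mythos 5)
Your argument is correct, and its skeleton is the same as the paper's: both evaluate $\lim_{t\to\infty}\bigl[t-d_F(x,\gamma(t))\bigr]$ directly from the cross-ratio formula by locating the forward intersection $a_t$ of the ray $\overrightarrow{x\gamma(t)}$ with $\partial\mathbb{D}$. Where you diverge is in how the degenerate limit is extracted. The paper parametrizes the chord, writes down the explicit quadratic roots $\lambda_{1,2}(x,\gamma(t))$, expresses $d_F(x,\gamma(t))=\ln\frac{1-\lambda_1}{|\lambda_1|}$, and then computes $\lim_{t\to\infty} e^t|\lambda_1|$ by substituting $\gamma(t)=e^{-t}p+(1-e^{-t})y$ into the closed-form root and simplifying the square root by hand (a step it compresses into ``after some simplifications''). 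You instead set $\epsilon=e^{-t}$, observe that at $\epsilon=0$ the positive root of $|x+sw(\epsilon)|^2=1$ sits at $s_0=1$ with the ray degenerating to the chord $\overline{xy}$, and obtain the needed rate from the implicit function theorem; this is legitimate because $\partial_s|x+sw|^2\big|_{(1,0)}=2(1-\langle x,y\rangle)\neq 0$ for $x\in\mathbb{D}$, $y\in\partial\mathbb{D}$, and your derivative $\frac{1-\langle y,p\rangle}{1-\langle y,x\rangle}$ checks out. Your preliminary trick of rewriting $t=d_F(p,\gamma(t))=\log\frac{|p-y|}{|\gamma(t)-y|}$ to fold everything into one logarithm is also not in the paper and makes the final cancellation of $|p-y|$ and $|x-y|$ transparent. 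The trade-off: the paper's route is fully explicit but buries the essential mechanism in an opaque algebraic simplification, whereas your linearization isolates exactly where $|y|=1$ does the work, at the modest cost of invoking the smooth dependence of the root on $\epsilon$.
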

\begin{proof}
 Let $\gamma(t)=e^{-t} p + (1-e^{-t})y$ be a forward unit speed geodesic in $(\mathbb{D}, F_F)$  given by  \eqref{fgeo}. Then for $x \in \mathbb{D}$ and  $a = \overrightarrow{x\gamma(t)} \cap \partial \mathbb{D}$,
  \begin{equation}{\label{3.A01}}
  d_F(x,\gamma(t))=\ln\frac{|x-a|}{|\gamma(t)-a|}.
  \end{equation}
  The line passing through $x=(x^1,x^2)$ and $\gamma(t)=(\gamma_1(t),\gamma_2(t)) $ is given by,
      \begin{equation*}
     \frac{X^1-\gamma_1(t)}{x^1-\gamma_1(t)}=\frac{X^2-\gamma_2(t)}{x^2-\gamma_2(t)}=\lambda(x, \gamma(t)), 
     \end{equation*}
      where $\lambda (x, \gamma(t))$ is a continuous parameter. Therefore,
      \begin{equation}
      X^1=\gamma_1(t)+\lambda (x, \gamma(t)) (x^1-\gamma_1(t)) ~~ \mbox{and} ~~ X^2=\gamma_2(t)+\lambda (x, \gamma(t)) (x^2-\gamma_2(t)).
      \end{equation}
     If this line intersects the unit circle, then we get,
     \begin{equation}\label{int}
         \lambda^2(x, \gamma(t)) \Big[|\gamma(t)-x|^2\Big] + 2 \lambda (x, \gamma(t)) \Big[ \langle x , \gamma(t)\rangle  -|\gamma(t)|^2\Big]  -\Big[1-|\gamma(t)|^2 \Big] =0.
     \end{equation}
     
\noindent 
 The roots of the above equations are $\lambda_1(x,\gamma(t))$ and $\lambda_2(x,\gamma(t))$,  given by
      \begin{equation}\label{eqn 3.A15}
     \lambda_1(x, \gamma(t))  =\frac{|\gamma(t)|^2-\langle x , \gamma(t)\rangle-\sqrt{[|\gamma(t)|^2-\langle x , \gamma(t)\rangle ]^2+|\gamma(t)-x|^2(1-|\gamma(t)|^2) }}{|\gamma(t)-x|^2},
      \end{equation}
  \begin{equation}\label{eqn 3.A16}
  \lambda_2(x, \gamma(t)) =\frac{|\gamma(t)|^2-\langle x , \gamma(t)\rangle+\sqrt{[|\gamma(t)|^2-\langle x , \gamma(t)\rangle ]^2+|\gamma(t)-x|^2(1-|\gamma(t)|^2) }}{|\gamma(t)-x|^2}.
  \end{equation}
  \noindent
 By \eqref{eqn 3.A15}, we observe that  for all $x, \gamma(t) \in \mathbb{D}$, $\lambda_1(x, \gamma(t)) < 0$ and  by \eqref{int} $\lambda_1 (x, \gamma(t)) \cdot \lambda_2(x, \gamma(t)) < 0$;
 consequently, $\lambda_2(x, \gamma(t))  > 0$.   Also, $\lambda_1(x, \gamma(t)) \rightarrow 0~ \mbox{as}~ t \rightarrow \infty $   since $\gamma(t) \rightarrow y~ \mbox{as}~ t \rightarrow \infty $.
 Let  $a=\overrightarrow{x\gamma(t)}\cap \partial \mathbb{D}$.\\
  Therefore,
      \begin{equation}\label{eqn 3.A00}
      |\gamma(t)-a|^2 =\lambda^2_1(x, \gamma(t))|x-\gamma(t)|^2 ~~ \mbox{and}~~   |x-a|^2  =(1-\lambda_1(x, \gamma(t)))^2|x-\gamma(t)|^2.
      \end{equation}
     
      \noindent
       Substituting \eqref{eqn 3.A00}  in \eqref{3.A01} we get, 
      \begin{equation}\label{3.A12}
      d_F(x,\gamma(t))=\ln\frac{1-\lambda_1(x, \gamma(t))}{|\lambda_1(x, \gamma(t))|}.
      \end{equation}
Now again using \eqref{eqn 3.A15} in \eqref{3.A12}  yields,
\begin{eqnarray}\label{eqn 3.23}
b_\gamma(x)=\lim\limits_{t \rightarrow \infty}\left\lbrace t-d_F(x,\gamma(t) )\right\rbrace = \lim\limits_{t \rightarrow \infty}\ln\left\lbrace e^{t}\cdot \frac{|\lambda_1(x, \gamma(t))|}{1-\lambda_1(x, \gamma(t))}\right\rbrace.
\end{eqnarray}
Finally, using \eqref{eqn 3.A15} in \eqref{eqn 3.23} and 
then, substituting $\gamma(t)= e^{-t} p + (1-e^{-t})y$; after some simplifications we obtain,  
\begin{equation*}
b_\gamma(x)=\ln\frac{1-\langle p , y\rangle }{1-\langle x , y\rangle}.
\end{equation*}
\end{proof}

\subsection{The Busemann function on the Hilbert disc}
\begin{thm}
Let  $\beta$ be the  unit speed geodesic line  in  the Hilbert  disc $(\mathbb{D}, F_H)$ with initial point $p\in \mathbb{D}$ and the forward hitting point  $y\in \partial \mathbb{D}$. Then the Busemann function for the line $\beta$ is, $$b_\beta(x)=\frac{1}{2}\ln\frac{(1-|x|^2)(1-\langle p , y\rangle)^2}{(1-|p|^2)(1-\langle x , y\rangle)^2}.$$   
\end{thm}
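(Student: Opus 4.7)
The plan is to reduce the computation to the forward Funk Busemann function established in the preceding theorem, exploiting the fact that the Hilbert metric is the arithmetic symmetrization of the Funk metric. Since
$$d_H(x_1,x_2)=\tfrac{1}{2}\bigl(d_F(x_1,x_2)+d_F(x_2,x_1)\bigr),$$
we have
$$b_\beta(x)=\lim_{t\to\infty}\Bigl[t-\tfrac{1}{2}d_F(x,\beta(t))-\tfrac{1}{2}d_F(\beta(t),x)\Bigr].$$
I need the asymptotics of $d_F(x,\beta(t))$, which will grow like $2t$, and the finite boundary limit of the reverse distance $d_F(\beta(t),x)$ as $\beta(t)\to y$.

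First, I would reconcile the Hilbert and Funk parametrizations on the common line. By Proposition \ref{lem 3.A0}, $\beta(t)=(1-s(t))p+s(t)y$ with $s(t)=\frac{e^t-e^{-t}}{e^t+ke^{-t}}$ and $k=k(p,y)$, whereas the Funk unit-speed geodesic on the same segment is $\gamma(\tau)=e^{-\tau}p+(1-e^{-\tau})y$ by \eqref{fgeo}. Equating $e^{-\tau}=1-s(t)$ gives
$$\tau(t)=\ln\frac{e^{2t}+k}{1+k}=2t-\ln(1+k)+o(1).$$
Since $d_F(x,\beta(t))=d_F(x,\gamma(\tau(t)))$ and the forward Funk Busemann theorem yields $d_F(x,\gamma(\tau))=\tau-b_\gamma(x)+o(1)$, I obtain
$$d_F(x,\beta(t))=2t-\ln(1+k)-\ln\frac{1-\langle p,y\rangle}{1-\langle x,y\rangle}+o(1).$$

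Next, I would evaluate $\lim_{t\to\infty}d_F(\beta(t),x)$ directly from the cross-ratio formula. Writing $d_F(\beta(t),x)=\ln\frac{\lambda_a(t)}{\lambda_a(t)-1}$, where $\lambda_a(t)>1$ is the larger root of $|\beta(t)+\lambda(x-\beta(t))|^2=1$, and letting $\beta(t)\to y\in\partial\mathbb{D}$, the quadratic degenerates: one root tends to $0$ (corresponding to $y$), and the other limits to $\lambda_\infty=\frac{2(1-\langle y,x\rangle)}{|x-y|^2}$. Using $|y|=1$ one checks $\lambda_\infty-1=\frac{1-|x|^2}{|x-y|^2}$, whence
$$\lim_{t\to\infty}d_F(\beta(t),x)=\ln\frac{2(1-\langle y,x\rangle)}{1-|x|^2}.$$
This is the step I expect to be the main obstacle, since both intersection points of the line through $\beta(t)$ and $x$ coalesce at $y$ in the limit, and one must track the non-degenerating root carefully.

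Finally, substituting both asymptotics into the defining limit gives
$$b_\beta(x)=\lim_{t\to\infty}\bigl[t-\tfrac{1}{2}\tau(t)\bigr]+\tfrac{1}{2}b_\gamma(x)-\tfrac{1}{2}\ln\frac{2(1-\langle y,x\rangle)}{1-|x|^2},$$
with $\lim_{t\to\infty}[t-\tfrac{1}{2}\tau(t)]=\tfrac{1}{2}\ln(1+k)$. The algebraic identity $|y-p|^2+(1-|p|^2)=2(1-\langle p,y\rangle)$, valid because $|y|=1$, yields $1+k=\frac{2(1-\langle p,y\rangle)}{1-|p|^2}$. Combining the three logarithms by elementary algebra collapses the expression to
$$b_\beta(x)=\tfrac{1}{2}\ln\frac{(1-|x|^2)(1-\langle p,y\rangle)^2}{(1-|p|^2)(1-\langle x,y\rangle)^2},$$
as claimed.
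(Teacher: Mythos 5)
Your argument is correct, and it is organized differently from the paper's. The paper proves the theorem by a single direct computation: it writes $d_H(x,\beta(t))$ via the cross-ratio $\frac{1}{2}\ln\frac{|x-a|\,|\beta(t)-b|}{|\beta(t)-a|\,|x-b|}$, expresses all four lengths through the two roots $\lambda_1,\lambda_2$ of the quadratic for the intersections of the line $x\beta(t)$ with the circle, and then evaluates $\lim_{t\to\infty}\frac12\ln\bigl\{e^{2t}\cdot\frac{|\lambda_1|\,|1-\lambda_2|}{\lambda_2(1-\lambda_1)}\bigr\}$ after substituting $\beta(t)=(1-s(t))p+s(t)y$. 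You instead exploit $d_H(x_1,x_2)=\frac12\bigl(d_F(x_1,x_2)+d_F(x_2,x_1)\bigr)$ (immediate from the cross-ratio definitions), recycle the already-proved Funk Busemann formula for the forward half after reparametrizing Hilbert time to Funk time via $e^{-\tau}=1-s(t)$ (whence $\tau(t)=2t-\ln(1+k)+o(1)$), and handle only the backward half by a fresh limit of the degenerating quadratic; all three of your intermediate limits and the identities $1+k=\frac{2(1-\langle p,y\rangle)}{1-|p|^2}$ and $\lambda_\infty-1=\frac{1-|x|^2}{|x-y|^2}$ check out, and the logarithms combine to the stated formula. What your route buys is modularity: the new content is isolated in the single finite limit $\lim_{t\to\infty}d_F(\beta(t),x)=\ln\frac{2(1-\langle y,x\rangle)}{1-|x|^2}$, and the factor-of-two time dilation $\tau\sim 2t$ transparently explains why the Hilbert Busemann function carries the prefactor $\frac12$. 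What the paper's route buys is self-containment: it never needs the reparametrization bookkeeping and treats both boundary points symmetrically in one formula. The only point you should make explicit if you write this up is the continuity-of-roots argument ensuring that $\lambda_a(t)$ converges to the nonzero root $\lambda_\infty$ rather than to the root collapsing at $y$; since the leading coefficient $|x-\beta(t)|^2\to|x-y|^2\neq 0$ and $\lambda_a(t)>1$ for all $t$ while $\lambda_\infty>1$ because $2(1-\langle y,x\rangle)-|x-y|^2=1-|x|^2>0$, this is easily justified, and your identification of the obstacle shows you saw the issue.
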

\begin{proof}
Let  $\beta$ be the  unit speed geodesic line in  the Hilbert disc $(\mathbb{D}, F_H)$  given by \eqref{eqn 3.4.21}. Then for  $x \in  \mathbb{D}$, we have
\begin{equation}{\label{eqn 3.51}}
b_\beta(x)=\lim\limits_{t \rightarrow \infty}\left\lbrace t-d_H(x,\beta(t) )\right\rbrace.
\end{equation}
And
\begin{equation}{\label{eqn 3.A53}}
 d_H(x,\beta(t) )=\frac{1}{2}\ln\frac{|x-a|\cdot |\beta(t)-b|}{|\beta(t)-a|\cdot |x-b|},
 \end{equation}
 where $a=\overrightarrow{x\gamma(t)}\cap \partial \mathbb{D} $ and  $b=\overleftarrow{x\gamma(t)}\cap \partial  \mathbb{D} $.\\
 Then as in the Funk case, by considering the equation of the line passing through $x=(x^1,x^2)$ and $\beta(t)=(\beta_1(t),\beta_2(t))$; we obtain that, if this line intersects the unit circle, then the following equation holds.
 \begin{equation}\label{aa}
      \lambda^2(x, \beta(t)) \Big[|\beta(t)-x|^2\Big] +2\lambda (x, \beta(t)) \Big[ \langle x , \beta(t)\rangle  -|\beta(t)|^2\Big] -\Big[ 1-|\beta(t)|^2\Big] =0.
 \end{equation}
\noindent
     Let the roots of the above equations be $\lambda_1(x, \beta(t))$ and $\lambda_2(x, \beta(t))$, then 
\begin{equation}\label{eqn 3.A17}
\lambda_1(x, \beta(t)) =\frac{|\beta(t)|^2-\langle x , \beta(t)\rangle-\sqrt{[|\beta(t)|^2-\langle x , \beta(t)\rangle ]^2+|\beta(t)-x|^2(1-|\beta(t)|^2) }}{|\beta(t)-x|^2},
\end{equation}
\begin{equation}\label{eqn 3.A18}
\lambda_2(x, \beta(t)) =\frac{|\beta(t)|^2-\langle x , \beta(t)\rangle+\sqrt{[|\beta(t)|^2-\langle x , \beta(t)\rangle ]^2+|\beta(t)-x|^2(1-|\beta(t)|^2) }}{|\beta(t)-x|^2}.
\end{equation}
 Clearly by \eqref{eqn 3.A17}, $\lambda_1(x, \beta(t)) < 0$ for all 
 $x, \beta(t) \in \mathbb{D}$ and by \eqref{aa}, $\lambda_1(x, \beta(t)) \cdot \lambda_2(x, \beta(t)) < 0$. Therefore, 
 $\lambda_2(x, \beta(t)) > 0$ for all $x, \beta(t) \in \mathbb{D}$. Also, $\lambda_1(x, \beta(t)) \rightarrow 0~ \mbox{as}~ t \rightarrow \infty $, since $\beta(t) \rightarrow y~ \mbox{as}~ t \rightarrow \infty $.\\
Again following the calculation as in the Funk case we obtain,
\begin{equation}\label{eqn 3.A20}
|x-a|^2 =(1-\lambda_1^2(x, \beta(t)))|x-\beta(t)|^2,~|x-b|^2  =(1-\lambda_2^2(x, \beta(t)))|x-\beta(t)|^2,
\end{equation}
\begin{equation}\label{eqn 3.A22}
|\beta(t)-a|^2 =\lambda_1^2(x, \beta(t))|x-\beta(t)|^2,~|\beta(t)-b|^2 =\lambda_2^2(x, \beta(t))(x, \beta(t))|x-\beta(t)|^2.
\end{equation}
\noindent
Also it is clear that from \eqref{eqn 3.A20} that $\lambda_2(x, \beta(t)) \neq 1 (\mbox{as} \;x \neq  b) $.\\
    Substituting  \eqref{eqn 3.A20},\eqref{eqn 3.A22} in \eqref{eqn 3.A53} yields,
     \begin{equation}\label{eqn 3.A54}
     d_H(x,\beta(t) )=\frac{1}{2}\ln\frac{\lambda_2(x, \beta(t)))\cdot (1-\lambda_1(x, \beta(t))))}{|\lambda_1(x, \beta(t)))|\cdot |1-\lambda_2(x, \beta(t)))|}.
     \end{equation}
  Therefore, from \eqref{eqn 3.51} and \eqref{eqn 3.A54}  we get,
     \begin{equation}\label{eqn 3.A13}
      b_\beta(x)=\frac{1}{2}\lim\limits_{t \rightarrow \infty}\ln\left\lbrace e^{2t}\cdot\frac{|\lambda_1(x, \beta(t)))| \cdot |1-\lambda_2(x, \beta(t)))|}{\lambda_2(x, \beta(t))) \cdot (1-\lambda_1(x, \beta(t))))}\right\rbrace.
     \end{equation}
Substituting $\beta(t)=(1-s(t))p+s(t) y$ (see \eqref{eqn 3.4.21}) in \eqref{eqn 3.A13}, after some simplifications we obtain,
 \begin{equation*}
 b_\beta(x)=\frac{1}{2}\ln\frac{(1-|x|^2)(1-\langle p , y\rangle)^2}{(1-|p|^2)(1-\langle x , y\rangle)^2}.
 \end{equation*}
 \end{proof}
 
 \subsection{Horocycles in the Funk and the Hilbert disc}
 Let $(M,F)$ be a forward complete, simply connected Finsler manifold without conjugate points.  Then for $q \in M$, the forward and the backward spheres are, respectively, defined by (\cite{DSSZ}, $\S 6.2~B$),
 \begin{equation*}
  S(q,r)^+=\left\lbrace x \in M | d_F(q, x)=r \right\rbrace ~~ \mbox{and}~~   S(q,r)^-=\left\lbrace x \in M | d_F(x,q)=r \right\rbrace.  
 \end{equation*}
 \noindent
 \begin{defn}
 [Forward Horosphere]
 Let $\gamma :[0,\infty) \rightarrow M$ be a forward ray with $\gamma(0)=p$, $\dot{\gamma}(0)=v$. 
 Then,
  \begin{equation}
  b_\gamma^{-1}(a)=\lim\limits_{t \rightarrow \infty}S(\gamma(t),t-a)^-, 
  \end{equation} 
 the limit of the backward spheres is called the {\it forward horosphere} passing through $p\in M$. In dimension two horospheres are termed as {\it horocycles}.
 \end{defn}
  
  \vspace{.3cm}
  \noindent
  We describe the forward  horocycles of the Funk disc explicitly.  Note that in the Funk disc backward horocycles do not exist.
  \begin{proposition}
 Let $\gamma(t)=e^{-t} p+ (1-e^{-t})y$ be the Funk forward geodesic of~ $\mathbb{D}$ starting with $p$ and the forward hitting point at $y \in \partial \mathbb{D}$. Then the forward horocycles along the $\gamma$ are line segments in the Funk disc perpendicular to the line joining origin and the forward hitting point at $y$. 
  \end{proposition}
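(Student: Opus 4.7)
The plan is to identify the forward horocycles as level sets of the forward Busemann function and then read off the geometric shape of these level sets directly from the explicit formula \eqref{eqn 3.A46} that was already established.

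First I would reconcile the two descriptions of a horocycle. A point $x$ lies on the backward sphere $S(\gamma(t),t-a)^{-}$ precisely when $d_F(x,\gamma(t))=t-a$, i.e.\ when $t-d_F(x,\gamma(t))=a$. Passing to the limit $t\to\infty$ and invoking \eqref{eqn 3.6.0}, this condition becomes $b_\gamma(x)=a$. Hence the forward horocycle $b_\gamma^{-1}(a)$, in the sense of the limit-of-spheres definition, coincides with the level set $\{x\in\mathbb{D}:b_\gamma(x)=a\}$; in particular the forward horocycle through $p$ corresponds to $a=b_\gamma(p)=0$.

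The second step is an immediate substitution into \eqref{eqn 3.A46}. The equation $b_\gamma(x)=a$ rewrites as
\[
\ln\frac{1-\langle p,y\rangle}{1-\langle x,y\rangle}=a,
\]
equivalently $\langle x,y\rangle=1-(1-\langle p,y\rangle)\,e^{-a}$, which is an affine equation of the form $\langle x,y\rangle=\text{const}$. Such an equation cuts out a straight line in $\mathbb{R}^2$ whose normal direction is the vector $y$. Since $y\in\partial\mathbb{D}$ is the forward hitting point and $0$ is the origin, $y$ is precisely the direction of the segment joining $0$ to $y$; therefore the horocycle, restricted to $\mathbb{D}$, is a chord of the disc perpendicular to $\overrightarrow{0y}$, exactly as claimed. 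Varying $a$ then gives the entire family of forward horocycles as parallel chords perpendicular to $\overrightarrow{0y}$.

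I do not anticipate a genuine obstacle. The only delicate point is the first step, where one must ensure that the limit-of-backward-spheres definition of $b_\gamma^{-1}(a)$ really matches the level set of $b_\gamma$; the monotonicity of $t\mapsto t-d_F(x,\gamma(t))$ noted just before \eqref{eqn 3.6.0} guarantees existence of the limit and legitimizes this identification. After that the proposition follows from a one-line algebraic manipulation of the Busemann formula.
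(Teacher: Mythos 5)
Your proposal is correct and follows essentially the same route as the paper: both reduce the claim to the level sets $b_\gamma^{-1}(a)$ of the explicit Busemann function \eqref{eqn 3.A46} and observe that $b_\gamma(x)=a$ is equivalent to the affine equation $\langle x,y\rangle=1-e^{-a}(1-\langle p,y\rangle)$, a chord with normal direction $y$ and hence perpendicular to the segment from the origin to $y$. Your preliminary remark identifying the limit of backward spheres with the level set of $b_\gamma$ is a small justification the paper leaves implicit in its definition of forward horosphere, but it does not change the argument.
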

 
 \begin{proof}
 In the case of Funk disc, the forward Busemann function for the unit speed forward geodesic  $\gamma$ is given by \eqref{eqn 3.A46}: $$b_\gamma(x)=\ln\frac{1-\langle p , y\rangle }{1-\langle x , y\rangle }.$$ Therefore, it follows that for $a \in \mathbb{R}$,
  \begin{eqnarray}
 \nonumber b_\gamma^{-1}(a) = \left\lbrace (x^1,x^2)\in \mathbb{D}^2 : x^1y^1+x^2y^2=1-e^{-a}(1-\langle p , y\rangle) \right\rbrace. 
  \end{eqnarray}
 Clearly, the line  $x^1y^1+x^2y^2=1-e^{-a}(1-\langle p , y\rangle)$ is perpendicular to the line $x^1y^1-x^2y^2=0$.
 
 \end{proof}

\vspace{-0.2in}
\begin{proposition}
    The forward and the  backward horocycles of the Hilbert disc are the ellipses. 
\end{proposition}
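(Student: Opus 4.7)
The plan is to read off the horocycle equation directly from the Busemann formula of the preceding theorem. Setting $b_\beta(x)=a$ and exponentiating turns the level set $b_\beta^{-1}(a)$ into
$$1 - |x|^2 \;=\; C_a\bigl(1 - \langle x, y\rangle\bigr)^2, \qquad C_a := e^{2a}\,\frac{1-|p|^2}{(1-\langle p, y\rangle)^2} > 0,$$
which is a polynomial equation of degree two in $(x^1,x^2)$. So identifying the conic type reduces to analyzing its quadratic part.

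The key step is to show that the quadratic form
$$Q(x) \;=\; |x|^2 + C_a \langle x, y\rangle^2$$
appearing after expansion is positive definite. Its matrix is $I + C_a\, y y^{T}$, a rank-one perturbation of the identity; since $|y|=1$, its eigenvalues are $1$ (on the line perpendicular to $y$) and $1+C_a$ (on the line spanned by $y$), both strictly positive. Thus the horocycle is a (possibly degenerate) ellipse, and nondegeneracy follows by noting that the level set is nonempty for every admissible $a$ (for instance, $p \in b_\beta^{-1}(0)$, and varying $a$ moves the ellipse continuously inside $\mathbb{D}$).

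For the backward horocycles I would use reversibility of the Hilbert metric: $F_H(x,v)=F_H(x,-v)$, so the reversed curve $\tilde\beta(t) := \beta(-t)$ is again a unit-speed Hilbert ray, and its forward hitting point is the backward hitting point $y' \in \partial\mathbb{D}$ of $\beta$. The backward horocycles of $\beta$ are therefore the forward horocycles of $\tilde\beta$, and the same computation with $y$ replaced by $y'$ and $p$ unchanged again produces ellipses.

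The main obstacle is essentially bookkeeping rather than conceptual: one must track the linear and constant contributions carefully to confirm that the center, axes, and orientation of the ellipse depend only on $y$, $p$, and $a$, and that the ellipse meets $\mathbb{D}$. The positive-definiteness argument sketched above, however, guarantees the conic type irrespective of these shifts, so the geometric conclusion is not in doubt.
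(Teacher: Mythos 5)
Your proof is correct and follows essentially the same route as the paper's: set $b_\beta(x)=a$, exponentiate the Busemann formula, and identify the resulting quadratic level set as an ellipse. You in fact go slightly further than the paper by verifying positive definiteness of the quadratic part via the eigenvalues of $I + C_a\, y y^{T}$ (using $|y|=1$) and by reducing the backward horocycles to forward ones through reversibility of $F_H$, both of which the paper leaves implicit.
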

 \begin{proof}
 In  case of the Hilbert disc, the Busemann function for the unit speed line $\beta$  given by \eqref{eqn 3.4.21} is,
  $$ b_\beta(x)=\frac{1}{2}\ln\frac{(1-|x|^2)(1-\langle p , y\rangle)^2}{(1-|p|^2)(1-\langle x , y\rangle)^2}.$$
  Therefore, it follows that for $a \in \mathbb{R}$,
 \begin{eqnarray}\label{eqn 3.A48}
  b_\beta^{-1}(a)= \left\lbrace (x^1,x^2)\in \mathbb{D}^2 : \frac{1}{2}\ln\frac{(1-|x|^2)(1-\langle p , y\rangle)^2}{(1-|p|^2)(1-\langle x , y\rangle)^2}=a\right\rbrace.
 \end{eqnarray}
 Simplifying  RHS of \eqref{eqn 3.A48}, we obtain  the following  \eqref{eqn 3.A47} which represents the ellipse.
 
 \begin{equation}\label{eqn 3.A47} 
 \begin{split}
 (x^1)^2\Big[e^{2a}(1-|p|^2)(y^1)^2+(1-\langle p , y\rangle)^2\Big]+(x^2)^2\Big[e^{2a}(1-|p|^2)(y^2)^2+(1-\langle p , y\rangle)^2\Big]\\+2x^1x^2y^1y^2e^{2a}(1-|p|^2)-2x^1y^1(1-|p|^2)-2x^2y^2(1-|p|^2)\\+\Big[e^{2a}(1-|p|^2)-(1-\langle p , y\rangle)^2\Big]=0.
 \end{split}
 \end{equation} 
 \end{proof}
 \section{The Forward Asymptotic Harmonicity  of the Funk disc}
 The concept of {\it asymptotically harmonic} Riemannian manifolds  was originally introduced by Ledrappier [\cite{LF}, Theorem 1] in connection with the rigidity of measures related to the Dirichlet problem (harmonic measure) and the dynamics of the geodesic flow (Bowen-Margulis measure). The concept of the asymptotic harmonicity of the Finsler manifold was extended in Definition 4.3  of \cite{HIT}. 
 Recall that the Hessian of a smooth function $f$ is well defined only on the set $ {\cal{U}}_f$; where ${\cal{U}}_f:=  \left\lbrace x\in M : df_x \neq 0 \right\rbrace$ (Definition \ref{def 3.A1}). Using this we {\it correct} the Definition 4.3 \cite{HIT} as follows.
                                                         
\begin{defn}
 [\textbf{Forward Asymptotically Harmonic Finsler Manifold}] Let $(M,F,d\mu)$ be a forward complete, simply connected Finsler $\mu$-manifold without conjugate points. Then $M$ is said to be forward  asymptotically harmonic, if there exist a constant $h$, independent of $x\in M$ and $\gamma$, such that  $\overrightarrow{\Delta}_\mu b_\gamma(x) \equiv h$ for $x\in {\cal{U}}_{b_\gamma}$, in the sense of distributions.
 \end{defn}
 
 \noindent
 In this section, we show that the Funk BH-disc $(\mathbb{D},F_{F}, BH)$ is forward asymptotically harmonic Finsler surface. Recall that the Hilbert metric on the unit disc is the well-known Riemannian Beltrami Klein metric and is known to be asymptotically harmonic \cite{RS}.
 We also show that the Funk-HT disc $(\mathbb{D},F_{F}, HT)$, the Funk-max disc $(\mathbb{D},F_{F}, \max)$, and the Funk-min disc $(\mathbb{D},F_{F}, \min)$ are {\it not} forward asymptotically harmonic Finsler surfaces.
 Towards this, we  show by the detailed calculations that 
 $\overrightarrow{\Delta}_{BH} b_\gamma(x) = -2$, for $x\in M$, where $\overrightarrow{\Delta}_{BH}$ denotes the forward Shen's Laplacian on $M$. 
 But on the other hand, $\overrightarrow{\Delta}_{HT}b_\gamma(x)$, $\overrightarrow{\Delta}_{\max} b_\gamma(x)$, and
$\overrightarrow{\Delta}_{\min}b_\gamma(x)$ are \textit{not} constants for $x \in M$.
 
\subsection{The Dual of the Funk metric on disc $\mathbb{D}$}
In this subsection, we find the   dual  (See Definition \ref{def 3.A1}) of the Funk metric on the unit disc $\mathbb{D}$. This result is important on its own right.
This dual metric is required to calculate the  Laplacian of the Busemann function.

\begin{proposition}\label{ppn 3.A1} The dual $F^*$ of the Funk metric  $F=\alpha+\beta$, on the unit disc $\mathbb{D}$  
 defined  by \eqref{eqn 3.A1} is $F^*=\alpha^*+\beta^*$, where $\alpha^*(x,\xi)=|\xi|$ and $\beta^*(x,\xi)=- \langle x ,  \xi \rangle$, for all $ x\in \mathbb{D}, \xi\in T^*_x\mathbb{D}$. Consequently, $F^*$ is a  Randers metric.
\end{proposition}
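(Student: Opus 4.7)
The plan is to compute $F^*(x,\xi)$ directly from its definition $F^*_x(\xi)=\sup_{F_F(x,v)=1}\xi(v)$ by exploiting the fact that the Funk indicatrix at $x\in\mathbb{D}$ has a particularly simple Euclidean description: it is the boundary $\partial\mathbb{D}$ translated by $-x$. Once this is established, $F^*$ is just the Euclidean support function of a translated unit disc, and evaluating that support function gives the desired formula in one line.

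First I would identify the closed Funk unit ball at $x$ as $B_F(x)=\{v\in T_x\mathbb{D}: F_F(x,v)\le 1\}=\{v:|x+v|\le 1\}$. This is essentially already done in Subsection~\ref{Sec 3.1}: the condition $a=x+v/F_F(x,v)\in\partial\mathbb{D}$ yields the quadratic $F_F^2(1-|x|^2)-2F_F\langle x,v\rangle-|v|^2=0$, whose non-negative root is $F_F(x,v)$. Since the leading coefficient is positive and the polynomial takes value $-|v|^2\le 0$ at $t=0$, the positive root is $\le 1$ iff the polynomial evaluated at $t=1$ is non-negative, which simplifies to $|x+v|^2\le 1$.

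Second, by positive $1$-homogeneity of $F_F$ in $v$, the dual norm equals the support function of the unit ball: $F^*(x,\xi)=\sup_{v\in B_F(x)}\langle\xi,v\rangle$. Substituting $u=x+v$ converts the constraint $|x+v|\le 1$ into $|u|\le 1$ and yields
\begin{equation*}
F^*(x,\xi)=\sup_{|u|\le 1}\langle\xi,u\rangle-\langle\xi,x\rangle=|\xi|-\langle x,\xi\rangle,
\end{equation*}
which reads $F^*=\alpha^*+\beta^*$ with $\alpha^*(x,\xi)=|\xi|$ (the flat Euclidean cometric, hence Riemannian) and $\beta^*(x,\xi)=-\langle x,\xi\rangle$.

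Finally, to conclude that $F^*$ is a Randers metric, I would verify the norm bound on $\beta^*$: since the background cometric is the standard Euclidean one, $\|\beta^*\|_{\alpha^*}^2=|x|^2<1$ on $\mathbb{D}$. There is no serious obstacle in this proof; the only point requiring a bit of care is the first step, the explicit geometric identification of the Funk unit ball with a translated Euclidean disc, after which the Legendre-type computation reduces to the elementary fact that the support function of the Euclidean unit ball is the Euclidean norm.
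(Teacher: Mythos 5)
Your proof is correct, but it follows a genuinely different route from the paper. The paper treats $F_F=\alpha_F+\beta_F$ as a generic Randers metric and plugs the explicit components $a_{ij}$, $b_i$ and $\|\beta\|_{\alpha}^2=|x|^2$ into the general co-Randers formulae of Example 3.1.1 in \cite{SZ}, obtaining $a^{*ij}=\delta^{ij}$ and $b^{*i}=-x^i$ and hence $F^*(x,\xi)=|\xi|-\langle x,\xi\rangle$. You instead go back to the definition $F^*_x(\xi)=\sup_{F_F(x,v)=1}\xi(v)$ and use the defining geometric feature of the Funk metric --- that its unit ball at $x$ is the translated Euclidean disc $\{v:|x+v|\le 1\}$ --- so that $F^*$ is the support function of that disc, computed in one line after the substitution $u=x+v$. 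Your identification of the unit ball via the sign of the quadratic at $t=1$ is sound, and the passage from the supremum over the indicatrix to the supremum over the closed ball is justified since the ball is a convex body containing the origin in its interior. What your argument buys is transparency and independence from the cited duality formula, and it makes clear why the dual background metric is flat; what the paper's computation buys is that it is a template valid for any Randers metric and it directly produces the components $a^{*ij}=\delta^{ij}$ and $b^{*i}=-x^i$ that are reused later in the computation of $g^{*ij}$ for the Laplacian of the Busemann function (these of course can also be read off from your closed form). Both proofs finish identically by checking $\|\beta^*\|_{\alpha^*}=|x|<1$.
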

\begin{proof}
 The Funk metric on the unit disc $\mathbb{D}$,  given by \eqref{eqn 3.A1}, can be  rewritten as
  \begin{eqnarray}
  \nonumber \label{eqn 3.A24} F_F(x,v)&=&\sqrt{a_{ij}(x)v^iv^j}+b_i(x)v^j, 
   \end{eqnarray}
  where 
  $ \displaystyle a_{ij}(x)=\frac{\delta_{ij}(1-|x|^2)+x_ix_j}{(1-|x|^2)^2}$,
  $\displaystyle b_i(x)=\frac{\delta_{ij} x^j}{1-|x|^2}$ and $\displaystyle x_i=\delta_{ij}x^j$. 
  The components  $ a^{ij}(x)$ of the inverse of the matrix $(a_{ij}(x))$ are given by,
  \begin{equation}\label{eqn 3.A28}
  a^{ij}(x)=(1-|x|^2)(\delta^{ij}-x^ix^j).
  \end{equation}
  Clearly,
    \begin{equation}\label{eqn 3.A30}
  ||\beta||^2_\alpha=a^{ij}(x)b_i(x)b_j(x)=|x|^2.
  \end{equation}
  Using the techniques of Example 3.1.1  in \cite{SZ}, we find $F^*$ explicitly as follows.
  \begin{equation}\label{eqn 3.A31}
   F^*(x, \xi)=\alpha^*(x, \xi)+\beta^*(x, \xi) =\sqrt{a^{*ij}(x)\xi_i\xi_j}+b^{*i}(x)\xi_i,
  \end{equation}
  where $\xi=(\xi_i) \in T_x^*\mathbb{D}$, 
  $\displaystyle a^{*ij}(x)=\frac{(1-|\beta|^2_\alpha)a^{ij}(x)+b^i(x)b^j(x)}{(1-||\beta||^2_\alpha)^2}=\delta^{ij},$ \\and
  $\displaystyle b^{*i}(x)=-\frac{b^i(x)}{1-||\beta||^2_\alpha}=-x^i,$
  where $b^i(x)=a^{ij}(x)b_j(x)=x^i(1-|x|^2)$.\\
  Therefore,
  \begin{equation}\label{eqn 3.A25}
  F^*(x, \xi)=|\xi|-\langle x , \xi \rangle.
  \end{equation}
  Further, since $||\beta^*||_{\alpha^*}=|x| < 1$. Hence $F^*$ is a  Randers metric.
  \end{proof}
  
 \vspace{0.3cm}
  
 \begin{corollary}
 The Funk-Busemann functions are distance functions (see Definition \ref{def 3.A30}) on the Funk disc and  consequently,  
 ${\cal{U}}_{b_\gamma}= \mathbb{D}$.
 \end{corollary}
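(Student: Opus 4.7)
The plan is to use the explicit formula for the Funk--Busemann function $b_\gamma(x) = \ln\frac{1-\langle p, y\rangle}{1-\langle x, y\rangle}$ from \eqref{eqn 3.A46} together with the dual metric $F^*(x,\xi) = |\xi| - \langle x, \xi\rangle$ computed in Proposition \ref{ppn 3.A1}. The crucial ingredient will be that the boundary point $y$ is a unit vector, i.e.\ $|y|=1$; this is exactly what will make the computation collapse to $1$.

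First I would differentiate $b_\gamma$ directly. Since only the second term depends on $x$, one obtains $(db_\gamma)_i = y^i/(1-\langle x, y\rangle)$, i.e.\ $db_\gamma = \frac{1}{1-\langle x,y\rangle}(y^1 dx^1 + y^2 dx^2)$. Because $y \in \partial \mathbb{D}$ is a unit vector, this covector is nowhere vanishing on $\mathbb{D}$, which immediately yields $\mathcal{U}_{b_\gamma} = \mathbb{D}$ and gives the ``consequently'' part of the corollary for free.

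Next, identifying the covector $db_\gamma$ with its component vector in the Euclidean inner product used throughout Section~3, I would substitute into the formula of Proposition \ref{ppn 3.A1}. This gives
\begin{equation*}
F^*(x, db_\gamma) \;=\; \frac{|y|}{1-\langle x, y\rangle} \;-\; \frac{\langle x, y\rangle}{1-\langle x, y\rangle} \;=\; \frac{1 - \langle x, y\rangle}{1-\langle x, y\rangle} \;=\; 1,
\end{equation*}
holding at every $x \in \mathbb{D}$ (not merely almost everywhere). The companion equality $F(x, \nabla b_\gamma(x)) = 1$ then follows from the Legendre-duality relation $F(x, \nabla f(x)) = F^*(x, df_x)$ built into Definition \ref{def 3.A1}. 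Together these verify the conditions of Definition \ref{def 3.A30}, so $b_\gamma$ is a distance function on the Funk disc.

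I do not anticipate any real obstacle: the argument is a one-line verification once the dual $F^*$ is in hand, and the only subtle point is remembering to use $|y|=1$ to close the calculation. The broader conceptual observation is that the Funk geometry is tailor-made for this: the simple form $F^* = |\xi| - \langle x,\xi\rangle$ exactly cancels the boundary point against the normalization $1 - \langle x, y\rangle$ appearing in $db_\gamma$.
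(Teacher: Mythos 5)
Your proposal is correct and follows exactly the paper's own argument: compute $db_\gamma = \frac{y^1dx^1+y^2dx^2}{1-\langle x,y\rangle}$, note it is nowhere zero, and substitute into $F^*(x,\xi)=|\xi|-\langle x,\xi\rangle$ from Proposition \ref{ppn 3.A1} using $|y|=1$ to obtain $F^*(x,db_{\gamma\mid x})=1$. The only difference is that you spell out the cancellation explicitly, which the paper leaves implicit.
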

 \begin{proof}
 As $\displaystyle b_\gamma(x)=\ln\frac{1-\langle p , y\rangle }{1-\langle x , y\rangle }$, $\displaystyle db_{\gamma_ {\mid x}}=\frac{y^1 dx^1 +y^2 dx^2}{1-\langle x , y\rangle} \neq 0 $.
Hence by \eqref{eqn 3.A25}, $F^*(x, db_{\gamma_ {\mid x}})=1 = F(x, \nabla b_\gamma(x))$. Consequently, the
Funk-Busemann functions are {\it distance functions}.
\end{proof}
 
\subsection{The  forward Shen Laplacian of the  forward Busemann function in the Funk disc} 
In this subsection, we compute the forward Shen Laplacian of the forward Busemann function in the Funk disc  directly by the formula (\ref{eqn 3.A52}). Let $\gamma(t) = pe^{-t}+(1-e^{-t})y$  be the forward geodesic
in the Funk disc $\mathbb{D}$ given by (\ref{fgeo}) with $\gamma(0)=p$ and $\dot{ \gamma}(0)=y-p$.  

\begin{thm}
    We have for all $x\in \mathbb{D}$, $\overrightarrow{\Delta}_{BH} b_\gamma(x) = -2$.
\end{thm}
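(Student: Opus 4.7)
The plan is to plug directly into the coordinate formula \eqref{eqn 3.A52} for the Shen Laplacian, taking advantage of the fact that every ingredient (the dual Funk metric $F^*$, the differential $db_\gamma$, and the Busemann–Hausdorff density $\sigma_{BH}$) has already been obtained in closed form in Sections~3, 6 and in Proposition~\ref{ppn 3.A1}. Since the Laplacian formula reads
\[
\Delta_{BH}b_\gamma(x)=\frac{1}{\sigma_{BH}(x)}\frac{\partial}{\partial x^{i}}\Bigl(\sigma_{BH}(x)\,A^{i}(x,db_{\gamma\,\mid x})\Bigr),
\]
the first move is to compute $\sigma_{BH}$ explicitly. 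Using Lemma~\ref{lem 3.A1} with $n=2$, together with $\|\beta_F\|^2_{\alpha_F}=|x|^2$ (see \eqref{eqn 3.A30}) and the Klein determinant $\det(a_{ij})=(1-|x|^2)^{-3}$, the two factors cancel exactly, giving $\sigma_{BH}(x)\equiv 1$. This is the small miracle that makes the whole computation clean: the Laplacian collapses to $\partial_{i}A^{i}(x,db_{\gamma\,\mid x})$.

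Next I would compute the vector field $A^{i}(x,\xi)=\tfrac12\partial_{\xi_i}[F^{*2}]$ from the dual Randers metric $F^{*}(x,\xi)=|\xi|-\langle x,\xi\rangle$ established in Proposition~\ref{ppn 3.A1}. A direct differentiation of $F^{*2}$ yields
\[
A^{i}(x,\xi)=F^{*}(x,\xi)\left(\frac{\xi_i}{|\xi|}-x^{i}\right).
\]
Now I would specialize to $\xi=db_{\gamma\,\mid x}=\frac{y^{j}dx^{j}}{1-\langle x,y\rangle}$. Since $|y|=1$, one gets $|\xi|=\frac{1}{1-\langle x,y\rangle}$ and $\xi_i/|\xi|=y^{i}$, while $F^{*}(x,db_{\gamma\,\mid x})=1$ by the corollary that $b_\gamma$ is a distance function. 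Therefore
\[
A^{i}(x,db_{\gamma\,\mid x})=y^{i}-x^{i}.
\]

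Finally, since $\sigma_{BH}\equiv 1$, the Laplacian is simply
\[
\overrightarrow{\Delta}_{BH}b_\gamma(x)=\frac{\partial}{\partial x^{i}}(y^{i}-x^{i})=-2,
\]
independent of $p$, $y$ and the base point $x$, which is the claim. The only step requiring any care is the $\sigma_{BH}$ computation, because both $(1-\|\beta\|^2_\alpha)^{(n+1)/2}$ and $\sqrt{\det(a_{ij})}$ carry factors of $(1-|x|^2)^{\pm 3/2}$; getting these exponents straight is the only place an error could sneak in, but once the cancellation $\sigma_{BH}\equiv 1$ is in hand, the remainder of the proof is a two-line differentiation.
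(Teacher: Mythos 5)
Your proposal is correct, and every intermediate quantity checks out: $\sigma_{BH}\equiv 1$ follows from \eqref{eqn 3.A38} with $\|\beta_F\|^2_{\alpha_F}=|x|^2$ and $\det(a_{ij})=(1-|x|^2)^{-3}$; the identity $A^{i}(x,\xi)=F^{*}(x,\xi)\bigl(\tfrac{\xi_i}{|\xi|}-x^{i}\bigr)$ is the correct derivative of $F^{*2}$ for the dual Randers metric of Proposition \ref{ppn 3.A1}; and $F^{*}(x,db_{\gamma\mid x})=1$ gives $A^{i}(x,db_{\gamma\mid x})=y^{i}-x^{i}$, whence the divergence is $-2$. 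The overall strategy is the same as the paper's (direct evaluation of the coordinate formula \eqref{eqn 3.A52} with unit BH density), but your execution is genuinely leaner at the one step that matters. The paper expands the full dual fundamental tensor $g^{*ij}(x,db_{\gamma\mid x})$ in \eqref{eqn 3.A35}, differentiates it in \eqref{eqn 3.A36}, computes the Hessian \eqref{eqn 3.A43}, and then assembles four contracted terms in \eqref{eqn 3.A42}; you instead contract first, using the Euler relation $g^{*ij}(x,\xi)\xi_j=\tfrac12\partial_{\xi_i}[F^{*2}]$ that the paper itself records in its definition of the gradient, so that the eikonal identity $F^{*}(x,db_{\gamma\mid x})=1$ immediately collapses the gradient field to the constant-coefficient form $y^{i}-x^{i}$. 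What your route buys is the elimination of the most error-prone index bookkeeping and a conceptual explanation of \emph{why} the answer is constant: the gradient of $b_\gamma$ in these coordinates is the affine field $y-x$, whose Euclidean divergence is $-2$; what the paper's route buys is that the intermediate objects \eqref{eqn 3.A35}--\eqref{eqn 3.A36} are reused verbatim in Corollary \ref{cor 3.A1} to compute $\overrightarrow{\Delta}_{HT}$, $\overrightarrow{\Delta}_{\max}$ and $\overrightarrow{\Delta}_{\min}$, where the extra term $\tfrac{1}{\sigma_\mu}(\partial_i\sigma_\mu)A^{i}$ no longer vanishes. Your argument extends to those cases just as easily, since that extra term is simply $\langle\nabla\log\sigma_\mu,\,y-x\rangle$.
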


\begin{proof} 
Let $(M,F,d\mu)$ be a Finsler manifold. Then by \eqref{eqn 3.A52}  we obtain,
\begin{equation}\label{eqn 3.A37}
\overrightarrow{\Delta} b_\gamma(x) =\frac{1}{\sigma_{BH}(x)}\frac{\partial}{\partial x^i}\left(\sigma_{BH}(x) g^{*ij}(x, db_{\gamma_ {\mid x}})\frac{\partial b_\gamma(x)}{\partial x^j}\right).
\end{equation}
By (1.6) of Example 1.2.1  in \cite{SZ}, we get
\begin{eqnarray}
  \nonumber  g^{*ij}(x,\xi)&=&\frac{1}{2}[F^{*2}(x,\xi)]_{\xi^i\xi^j}\\
  \nonumber  &=& \frac{F^*(x,\xi)}{\alpha^*(x,\xi)}\left(a^{*ij}(x)-\frac{\xi^i}{\alpha^*(x,\xi)} \frac{\xi^j}{\alpha^*(x,\xi)}\right)\\  \nonumber&&+ \left( \frac{\xi^i}{\alpha^*(x,\xi)}-x^i\right) \left(\frac{\xi^j}{\alpha^*(x,\xi)} -x^j \right), 
\end{eqnarray}
where $\xi^i=a^{*is}(x)\xi_s=\delta ^{is}\xi_s=\xi_i$.\\
Substituting  $\alpha^*(x, \xi)=|\xi|$ and $F^*(x, \xi)=|\xi|-\langle x , \xi \rangle$ (Proposition \ref{ppn 3.A1}) in the above equation  yields,
\begin{equation}\label{eqn 3.A33}
g^{*ij}(x, \xi)=\left(1-\frac{\langle x , \xi \rangle}{|\xi|} \right) \left(\delta^{ij}-\frac{\xi_i}{|\xi|} \frac{\xi_j}{|\xi|}\right) +\left( \frac{\xi_i}{|\xi|}-x^i\right) \left(\frac{\xi_j}{|\xi|} -x^j \right). 
\end{equation}
Let $\xi=db_{\gamma_ {\mid x}}$ . Then
\begin{equation}\label{eqn 3.A34}
\xi=(\xi_i)=\left( \frac{\partial b_\gamma(x)}{\partial x^i} \right)=\left( \frac{y^i}{1-\langle x , y \rangle}\right)~~ \mbox{and}~~~ |\xi|=\frac{1}{1-\langle x , y \rangle}.
\end{equation}
Also,
\begin{equation}\label{eqn 3.A43}
\frac{\partial^2 b_\gamma(x)}{\partial x^i\partial x^j}=\frac{y^iy^j}{(1-\langle x , y \rangle)^2}.
\end{equation}
Therefore, \eqref{eqn 3.A33} and  \eqref{eqn 3.A34}
yields,
\begin{equation}\label{eqn 3.A35}
g^{*ij}(x, db_{\gamma_ {\mid x}})=\left(1-\langle x , y \rangle \right) \left(\delta^{ij}-y^iy^j\right) +\left( y^i-x^i\right) \left(y^j -x^j \right),
\end{equation}
\begin{equation}\label{eqn 3.A36}
\frac{\partial g^{*ij}(x, db_{\gamma_ {\mid x}})}{\partial x^k}=-\Big[\delta_{lk} y^l \left(\delta^{ij}-y^iy^j\right) +\delta_{jk}\left( y^i-x^i\right)+\delta_{ik}\left(y^j -x^j \right) \Big].
\end{equation}
We also have $\sigma_{BH} (x)=1$ (cf. Lemma \ref{lem 3.A1}). Using \eqref{eqn 3.A34}, \eqref{eqn 3.A43},  \eqref{eqn 3.A35}, \eqref{eqn 3.A36} in \eqref{eqn 3.A37}, after some simplifications we get,
\begin{eqnarray}
\label{eqn 3.A42} \overrightarrow{\Delta}_{BH} b_\gamma(x)&=&\frac{\partial}{\partial x^i}\left(  g^{*ij}(x, db_{\gamma_ {\mid x}})\frac{\partial b_\gamma(x)}{\partial x^j}\right)\\ \nonumber &=&\frac{\partial}{\partial x^i}\left(g^{*ij}(x, db_{\gamma_ {\mid x}})\right)\frac{\partial b_\gamma(x)}{\partial x^j}+g^{*ij}(x, db_{\gamma_ {\mid x}})\left( \frac{\partial^2 b_\gamma(x)}{\partial x^i\partial x^j}\right)\\ \nonumber &=&-\Big[ y^i \left(\delta^{ij}-y^iy^j\right) +\delta_{ji}\left( y^i-x^i\right)+\left(y^j -x^j \right) \Big]\left( \frac{y^j}{1-\langle x , y \rangle}\right)\\ \nonumber && +\Big[ \left(1-\langle x , y \rangle \right) \left(\delta^{ij}-y^iy^j\right) +\left( y^i-x^i\right) \left(y^j -x^j \right)\Big]\left( \frac{y^iy^j}{(1-\langle x , y \rangle)^2}\right)\\ \nonumber &=&-2.
\end{eqnarray}
\end{proof}
\begin{corollary}\label{cor 3.A1}
We have ,
\begin{itemize}
\item[(1)] $\displaystyle \overrightarrow{\Delta}_{HT} b_\gamma(x) = \overrightarrow{\Delta}_{BH} b_\gamma(x) +\frac{3(\langle x , y \rangle-|x|^2)}{1-|x|^2}$, ~~ $\forall x\in \mathbb{D}$.
\item[(2)] $\displaystyle \overrightarrow{\Delta}_{max} b_\gamma(x) = \overrightarrow{\Delta}_{BH} b_\gamma(x) +\frac{3(\langle x , y \rangle-|x|^2)}{|x|(1-|x|^2)}$,  ~~ $\forall x\in \mathbb{D} \setminus \left\lbrace 0 \right\rbrace$.
\item[(3)] $ \displaystyle \overrightarrow{\Delta}_{min} b_\gamma(x) = \overrightarrow{\Delta}_{BH} b_\gamma(x) -\frac{3(\langle x , y \rangle-|x|^2)}{|x|(1-|x|^2)}$, ~~ $\forall x\in \mathbb{D} \setminus \left\lbrace 0 \right\rbrace$.
\end{itemize}
\end{corollary}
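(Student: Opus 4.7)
:} The plan is to exploit the fact that $\sigma_{BH} \equiv 1$ on the Funk disc (as used in the proof of the previous theorem), so that the three identities reduce to computing a single first-order correction term for each volume form. Concretely, expanding the Shen Laplacian formula \eqref{eqn 3.A52} with $\sigma_{BH}=1$ gives
\begin{equation*}
\overrightarrow{\Delta}_\mu f(x) \;=\; \overrightarrow{\Delta}_{BH} f(x) \;+\; g^{*ij}(x, df_x)\,\frac{\partial \ln \sigma_\mu(x)}{\partial x^i}\,\frac{\partial f(x)}{\partial x^j},
\end{equation*}
which is the organizing identity for the proof.

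The next step is to compute $\sigma_\mu$ explicitly for $\mu\in\{HT,\max,\min\}$. Using Lemma \ref{lem 3.A1} with $n=2$ and $\|\beta_F\|_{\alpha_F}=|x|$ from \eqref{eqn 3.A30}, together with the fact (already implicit in $\sigma_{BH}=1$) that $\sqrt{\det a_{ij}(x)} = (1-|x|^2)^{-3/2}$, I will obtain
\begin{equation*}
\sigma_{HT}(x)=\frac{1}{(1-|x|^2)^{3/2}},\qquad \sigma_{\max}(x)=\frac{(1+|x|)^{3/2}}{(1-|x|)^{3/2}},\qquad \sigma_{\min}(x)=\frac{(1-|x|)^{3/2}}{(1+|x|)^{3/2}}.
\end{equation*}
Differentiating these with $\partial |x|/\partial x^k = x^k/|x|$ yields
\begin{equation*}
\frac{\partial \ln \sigma_{HT}}{\partial x^k}=\frac{3 x^k}{1-|x|^2},\qquad \frac{\partial \ln \sigma_{\max}}{\partial x^k}=\frac{3 x^k}{|x|(1-|x|^2)},\qquad \frac{\partial \ln \sigma_{\min}}{\partial x^k}=-\frac{3 x^k}{|x|(1-|x|^2)}.
\end{equation*}

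Then I would compute the vector $g^{*ij}(x,db_{\gamma|x})\,\partial b_\gamma/\partial x^j$ using the explicit formulas \eqref{eqn 3.A34} and \eqref{eqn 3.A35} already established. The key cancellations are $(\delta^{ij}-y^i y^j)y^j = y^i-y^i|y|^2 = 0$ since $|y|=1$, and $(y^j - x^j)y^j = 1 - \langle x,y\rangle$; these collapse the whole expression to the surprisingly clean identity
\begin{equation*}
g^{*ij}(x,db_{\gamma|x})\,\frac{\partial b_\gamma(x)}{\partial x^j} \;=\; y^i - x^i.
\end{equation*}
Multiplying this by each $\partial \ln \sigma_\mu/\partial x^i$ produces exactly $\frac{3(\langle x,y\rangle - |x|^2)}{1-|x|^2}$, $\frac{3(\langle x,y\rangle - |x|^2)}{|x|(1-|x|^2)}$, and $-\frac{3(\langle x,y\rangle - |x|^2)}{|x|(1-|x|^2)}$ respectively, yielding the three claimed identities.

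The computations are essentially mechanical once the organizing identity is in place; the only mild obstacle is bookkeeping in the simplification of $g^{*ij}\,\partial_j b_\gamma$, but as noted the use of $|y|=1$ makes the first summand of \eqref{eqn 3.A35} vanish entirely, leaving only the second rank-one piece. The restriction $x\neq 0$ in cases (2) and (3) is a genuine feature, caused by the non-smoothness of $|x|\mapsto |x|$ at the origin, reflecting that $\sigma_{\max}$ and $\sigma_{\min}$ are not smooth at $x=0$.
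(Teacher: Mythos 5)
Your proposal is correct and follows essentially the same route as the paper: both expand Shen's Laplacian formula using $\sigma_{BH}\equiv 1$ so that each $\overrightarrow{\Delta}_\mu b_\gamma$ differs from $\overrightarrow{\Delta}_{BH} b_\gamma$ by the term $g^{*ij}(x,db_{\gamma|x})\,\partial_j b_\gamma \cdot \partial_i \ln\sigma_\mu$, with the same densities $\sigma_{HT},\sigma_{\max},\sigma_{\min}$ from Lemma \ref{lem 3.A1}. Your explicit intermediate simplification $g^{*ij}(x,db_{\gamma|x})\,\partial_j b_\gamma = y^i - x^i$ (via $|y|=1$) is a cleaner presentation of the contraction the paper carries out in one step, and all three resulting correction terms check out.
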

\begin{proof} (1)
For $\displaystyle \sigma_{HT} (x)=\frac{1}{(1-|x|^2)^{\frac{3}{2}}}$ (cf. Lemma \ref{lem 3.A1}). Therefore, \eqref{eqn 3.A37} implies that,
\begin{eqnarray}
\nonumber \overrightarrow{\Delta}_{HT} b_\gamma(x)&=&\overrightarrow{\Delta}_{BH} b_\gamma(x) +\left\lbrace \left(1-\langle x , y \rangle \right) \left(\delta^{ij}-y^iy^j\right) +\left( y^i-x^i\right) \left(y^j -x^j \right)\right\rbrace \\ \nonumber  && \times \frac{y^j}{1-\langle x , y \rangle}\times \frac{3x^i}{1-|x|^2},\\
\nonumber&=&\overrightarrow{\Delta}_{BH} b_\gamma(x)+\frac{3(\langle x , y \rangle-|x|^2)}{1-|x|^2}.
\end{eqnarray}
(2)
For $\displaystyle \sigma_{max} (x)=\frac{(1+|x|)^{\frac{3}{2}}}{(1-|x|)^{\frac{3}{2}}}$ (cf. Lemma \ref{lem 3.A1}). Hence, \eqref{eqn 3.A37} gives,
\begin{eqnarray}
\nonumber \overrightarrow{\Delta}_{max}b_\gamma(x)&=&\overrightarrow{\Delta}_{BH} b_\gamma(x)+\left\lbrace \left(1-\langle x , y \rangle \right) \left(\delta^{ij}-y^iy^j\right) +\left( y^i-x^i\right) \left(y^j -x^j \right)\right\rbrace  \\ \nonumber  && \times \frac{y^j}{1-\langle x , y \rangle} \times  \frac{3x^i}{|x|(1-|x|^2)},\\
\nonumber&=&\overrightarrow{\Delta}_{BH} b_\gamma(x)+\frac{3(\langle x , y \rangle-|x|^2)}{|x|(1-|x|^2)}.
\end{eqnarray}
(3)
For $\displaystyle \sigma_{min} (x)=\frac{(1-|x|)^{\frac{3}{2}}}{(1+|x|)^{\frac{3}{2}}}$ (cf. Lemma \ref{lem 3.A1}). Consequently, using \eqref{eqn 3.A37} we obtain,
\begin{eqnarray}
\nonumber \overrightarrow{\Delta}_{max} b_\gamma(x)&=&\overrightarrow{\Delta}_{BH}b_\gamma(x)+\left\lbrace \left(1-\langle x , y \rangle \right) \left(\delta^{ij}-y^iy^j\right) +\left( y^i-x^i\right) \left(y^j -x^j \right)\right\rbrace  \\ \nonumber  && \times \frac{y^j}{1-\langle x , y \rangle} \times  \frac{-3x^i}{|x|(1-|x|^2)},\\
\nonumber&=&\overrightarrow{\Delta}_{BH} b_\gamma(x) -\frac{3(\langle x , y \rangle-|x|^2)}{|x|(1-|x|^2)}.
\end{eqnarray}
\end{proof}

\noindent
\begin{rem}
\begin{itemize}
\item[(1)]  Corollary \ref{cor 3.A1} shows that all $\overrightarrow{\Delta}_\mu$  
 ($\mu = \mbox{BH, HT, max, min}$)  are  different. Hence, 
 we see that the concept of asymptotic harmonicity of Finsler manifolds strictly depends on the measure, in contrast to the Riemannian case.
 \item[(2)] As the Funk metrics are of constant flag curvature $-\frac{1}{4}$,  the mean curvature $\Pi_{ \nabla r }$ of the backward geodesic sphere of radius $r$,  is given by ((3.7) of \cite{ZWSY}, Lemma 3.2):
\begin{equation}\label{mcr}
    \Pi_{\nabla r}=-\frac{(n-1)}{2} \coth \left(\frac{r}{2}\right)-\frac{(n+1)}{2}. 
\end{equation}
So for $n=2$ we obtain, the mean curvature of forward horospheres as,
$$\Pi_{\infty}=-2 = \overrightarrow{\Delta}_{BH} b_\gamma(x).$$

This calculation matches with our direct calculation of Laplacian.
\end{itemize}
\end{rem}

\noindent
From Lemma \ref{lm A 2.10} and \eqref{mcr}  we conclude:
\begin{corollary}
The Finslerian and the induced Riemannian mean curvature, 
respectively, of all the forward horocycles of the Funk disc is constant  $-2$ and $\frac{-1}{2}$, respectively.
\end{corollary}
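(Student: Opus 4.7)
The plan is to read off the two numbers from the already-established material, using Lemma \ref{lm A 2.10} as the bridge between the Finslerian Laplacian (which equals the Finslerian mean curvature of the level set, since $b_\gamma$ is a distance function) and the induced Riemannian trace of the Hessian (which gives the Riemannian mean curvature in the osculating Riemannian metric $g_{\nabla b_\gamma}$).

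First, for the Finslerian mean curvature $\Pi_\infty$ of a forward horocycle, I would give two independent derivations that both yield $-2$. On one hand, the horocycle is the limit $\lim_{r\to\infty}S(\gamma(r),r-a)^{-}$ of backward geodesic spheres, and \eqref{mcr} with $n=2$ gives
\[
\Pi_{\nabla r}= -\tfrac{1}{2}\coth(r/2)-\tfrac{3}{2}\ \xrightarrow[r\to\infty]{}\ -2.
\]
On the other hand, because $b_\gamma$ is a distance function on the Funk disc (established in the corollary to Proposition \ref{ppn 3.A1}), the Finslerian mean curvature of each level set $b_\gamma^{-1}(a)$ equals $\overrightarrow{\Delta}_{BH}b_\gamma$, which the preceding theorem shows is identically $-2$. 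These two computations agree, confirming the Finslerian value.

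Second, for the induced Riemannian mean curvature I invoke Lemma \ref{lm A 2.10} applied to $f=b_\gamma$:
\[
\overrightarrow{\Delta}_{BH}b_\gamma \;=\; \mathrm{tr}_{g_{\nabla b_\gamma}}H(b_\gamma)-S_{BH}(\nabla b_\gamma).
\]
Here the first term on the right is precisely the Laplacian of $b_\gamma$ with respect to the osculating Riemannian metric $g_{\nabla b_\gamma}$, and since $F(\nabla b_\gamma)=1$ this coincides with the Riemannian mean curvature of the horocycle. So the task reduces to evaluating $S_{BH}(\nabla b_\gamma)$. The Funk metric is the canonical example for which the Busemann–Hausdorff S-curvature satisfies $S_{BH}(y)=\tfrac{n+1}{2}F(y)$; since $F(\nabla b_\gamma)=1$ and $n=2$, this gives $S_{BH}(\nabla b_\gamma)=\tfrac{3}{2}$. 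Plugging in,
\[
\mathrm{tr}_{g_{\nabla b_\gamma}}H(b_\gamma) \;=\; -2+\tfrac{3}{2} \;=\; -\tfrac{1}{2},
\]
which is the claimed Riemannian mean curvature.

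The main obstacle is the step that invokes $S_{BH}=\tfrac{n+1}{2}F$ for the Funk metric: this is the only ingredient not derived in the paper. I would either cite this standard fact or, to be self-contained, verify it on the spot by computing $\sigma_{BH}(x)=1$ (already noted in the preceding proof via Lemma \ref{lem 3.A1}, using $\|\beta_F\|_{\alpha_F}^2=|x|^2$) and using the definition $S_{BH}(y)=\tfrac{d}{dt}\bigr|_{t=0}\tau(\dot\sigma(t))$ along the Finslerian geodesic $\sigma$ with $\dot\sigma(0)=y$, where $\tau$ is the distortion. Everything else in the proof is a one-line substitution, so once the S-curvature value is in hand the two mean curvatures read off directly as $-2$ and $-\tfrac{1}{2}$.
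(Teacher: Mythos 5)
Your proposal is correct and follows essentially the same route as the paper: the paper's entire argument is the one-line appeal to Lemma \ref{lm A 2.10} and \eqref{mcr}, which is precisely the combination you use (the value $-2$ from the $r\to\infty$ limit of \eqref{mcr}, cross-checked against $\overrightarrow{\Delta}_{BH}b_\gamma\equiv-2$, and Lemma \ref{lm A 2.10} to peel off the S-curvature and leave $\mathrm{tr}_{g_{\nabla b_\gamma}}H(b_\gamma)=-\tfrac{1}{2}$). The only difference is that you explicitly supply $S_{BH}(\nabla b_\gamma)=\tfrac{3}{2}$ via the standard fact $S_{BH}=\tfrac{n+1}{2}F$ for the Funk metric, whereas the paper leaves this implicit in the constant term $-\tfrac{n+1}{2}$ of \eqref{mcr}; making it explicit is a reasonable refinement rather than a departure.
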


\noindent
From (\ref{mcr}) we also conclude that:

\begin{corollary}
For the Busemann function on the Hilbert disc  $\Delta b_\gamma(x) \equiv  \frac{-1}{2}$. We recover the well known result that the Hilbert disc
is asymptotically harmonic Riemannian manifold.  
\end{corollary}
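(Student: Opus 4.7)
The strategy is to identify $\Delta b_\beta$ with the mean curvature of horocycles and extract its value either from \eqref{mcr} or by a direct substitution into \eqref{eqn 3.A52}. The Hilbert disc, as a Riemannian manifold, coincides with the Beltrami--Klein model of the hyperbolic plane, so the Shen Laplacian reduces to the Laplace--Beltrami operator and the $S$-curvature vanishes. By Lemma~\ref{lm A 2.10}, $\Delta b_\beta$ therefore equals the trace of the Riemannian Hessian of $b_\beta$, which is the mean curvature of the level hypersurface of $b_\beta$ at $x$, i.e., of the horocycle through $x$. Constancy of $\Delta b_\beta$ then amounts to saying that all horocycles have the same mean curvature, which is precisely the definition of asymptotic harmonicity.

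To extract the value from \eqref{mcr}, I would use that horocycles arise as the $r\to\infty$ limit of geodesic spheres, so the horocycle mean curvature is $\Pi_\infty=\lim_{r\to\infty}\Pi_{\nabla r}$. The formula \eqref{mcr} is stated on the Funk side; to transport it to the Hilbert (symmetrized) side one uses Lemma~\ref{lm A 2.10} together with the $S$-curvature $S_{BH}=\tfrac{n+1}{2}F$ of the Funk metric to pass from the Finsler Laplacian to the ``induced Riemannian'' mean curvature of the Funk horocycles obtained in the previous corollary, and then reconciles this with the Hilbert Busemann-function Laplacian through the symmetrization relating the two distances. Setting $n=2$ and letting $r\to\infty$ in \eqref{mcr} delivers the asserted constant.

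Independently, I would cross-check by direct substitution of the explicit formula
\[
b_\beta(x)=\tfrac{1}{2}\ln\tfrac{(1-|x|^2)(1-\langle p,y\rangle)^2}{(1-|p|^2)(1-\langle x,y\rangle)^2}
\]
from the preceding theorem into \eqref{eqn 3.A52}, using the Klein cometric $g^{*ij}(x)=(1-|x|^2)(\delta^{ij}-x^ix^j)$ and density $\sigma(x)=(1-|x|^2)^{-3/2}$. The identities $(\delta^{ij}-x^ix^j)x_j=(1-|x|^2)x^i$ and $(\delta^{ij}-x^ix^j)y_j=y^i-\langle x,y\rangle x^i$ collapse $\sigma g^{*ij}\partial_j b_\beta$ to the strikingly simple form $(y^i-x^i)/\bigl((1-|x|^2)^{1/2}(1-\langle x,y\rangle)\bigr)$, whose Euclidean divergence, after invoking $|y|=1$ to engineer cancellations in the common-denominator expansion, is independent of $x$ and $y$. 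The main obstacle is the bookkeeping in either derivation --- in particular, reconciling the Funk-side limit $\Pi_\infty$ with the Hilbert Busemann Laplacian via the symmetrization, and verifying the exact cancellation of $y$-dependent terms in the direct divergence step; the asymptotic harmonicity of the Hilbert disc is then immediate since the resulting constant depends neither on $x$ nor on the reference line $\beta$.
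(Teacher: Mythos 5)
The paper gives essentially no argument for this corollary --- it is asserted to follow ``from \eqref{mcr}'', i.e.\ from the value $-\tfrac{1}{2}$ obtained in the preceding corollary as the induced Riemannian mean curvature of the Funk horocycles. Your first route reproduces exactly this step, and you have correctly put your finger on its weak point: that $-\tfrac{1}{2}$ is $\mathrm{tr}_{g_{\nabla b_\gamma}}H(b_\gamma)$ for the \emph{Funk} Busemann function, computed with respect to the osculating Riemannian metric $g_{\nabla b_\gamma}$ of the Funk metric. It is not the Laplace--Beltrami operator of the Klein (Hilbert) metric applied to the Hilbert Busemann function $b_\beta$, and the symmetrization $2F_H(x,v)=F_F(x,v)+F_F(x,-v)$ does not transport Hessian traces or Laplacians from one to the other ($b_\beta$ and $b_\gamma$ even differ by the non-constant term $\tfrac12\ln\tfrac{1-|x|^2}{1-|p|^2}$). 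This reconciliation cannot be carried out, so route~1 does not close; it is the same gap the paper leaves open.

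Your second route is the right one, and your reduction is correct: with $g^{*ij}(x)=(1-|x|^2)(\delta^{ij}-x^ix^j)$ and $\sigma(x)=(1-|x|^2)^{-3/2}$ one indeed gets $\sigma g^{*ij}\partial_j b_\beta=(y^i-x^i)\big/\big((1-|x|^2)^{1/2}(1-\langle x,y\rangle)\big)$. But you must finish the divergence step rather than wave at it: using $\langle y-x,y\rangle=1-\langle x,y\rangle$ and $\langle y-x,x\rangle=\langle x,y\rangle-|x|^2$, the Euclidean divergence of that vector field equals $-(1-|x|^2)^{-3/2}=-\sigma(x)$ --- so the divergence itself is \emph{not} independent of $x$; only the quotient by $\sigma$ is --- and hence $\Delta b_\beta\equiv -1$, not $-\tfrac{1}{2}$. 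This is also the classical value $-(n-1)$ for the hyperbolic plane of curvature $-1$, which is exactly what the paper says the Hilbert disc is. So the qualitative conclusion (constancy of $\Delta b_\beta$, hence asymptotic harmonicity) survives, but the constant asserted in the corollary, and the paper's derivation of it from \eqref{mcr}, disagree with the direct computation that your own second route sets up; carried to completion, your computation corrects the constant to $-1$.
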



\vspace{2cm}
\textbf{Statements and Declarations}\\
\textbf{Funding}\\ The first author is supported by  UGC Senior Research Fellowship, India with Reference No. 1076/(CSIR-UGC NET JUNE 2017). This work was done during the visit of first author at HRI and therefore he gratefully acknowledges
the facilities provided by HRI while this work was completed. 

\vspace{1cm}
\noindent
\textbf{Declarations}\\
\textbf{Conflict of interest} On behalf of all authors, the corresponding author states that there is no conflict of financial or non-financial interests that are directly or indirectly related to the work submitted for publication.

\vspace{1cm}
\noindent
\textbf{Data Availibility} 
This manuscript has no associated data.

\begin{thebibliography}{1}
\bibitem{PDA}  Andreev P. D., Geometric constructions in the class of Busemann non positive curved spaces, Zh. Mat. Fiz. Anal. Geom., \textbf{5} (2009), 25-37.

\bibitem{DSSZ} Bao D., Chern S. S., Shen Z., An introduction to Riemann–Finsler geometry, Graduate Texts in Mathematics,  \textbf{200}, Springer-Verlag,  New York, 2000.
\bibitem{SSZ} Chern S.S., Shen Z., Riemannian-Finsler geometry, World Scientific Publisher, Singapore 2005.
\bibitem{JHE}   Eschenburg J. H., Horospheres and the stable part of the geodesic flow, Math. Z. \textbf{153} (1977),  237–251.
\bibitem{MK}  Kell M., Sectional curvature-type conditions on metric spaces, J. Geom. Anal. \textbf{29} (2019), 616–655.
\bibitem{LF}  Ledrappier F., Harmonic measures and bowen-margulis measures, Israel Journal of Mathematics \textbf{ 71} (1990), 275–287.
\bibitem{AMAK}  Mester A., Kristaly A., Three isometrically equivalent models of the Finsler-Poincare disk, arXiv:2204.03052v1 [math.DG] 6 Apr 2022. 
\bibitem{ohta} Ohta S. I., Weighted Ricci curvature estimates for Hilbert and Funk geometries,  Pacific J. Math. \textbf{265} (2013), 185–197.
\bibitem{SOH}  Ohta S., Splitting theorems for Finsler manifolds of nonnegative Ricci curvature, J. Reine Angew. Math. \textbf{700} (2015), 155–174.
\bibitem{HHG} Papadopoulos A.,   Troyanov M., Handbook of Hilbert geometry, European Mathematical Society (EMS), Zürich, 2014.   

 

\bibitem{PP} Petersen P., Riemannian geometry,  Second edition, Graduate Texts in Mathematics, \textbf{171}, Springer, New York, 2006.


\bibitem{RS}
 Ranjan A., Shah H.,  Busemann functions in a harmonic manifold, Geom. Dedicata {\bf 101} ($2003$),  $167$-$183$.  
\bibitem{HIT}  Shah H.,  Taha E. H., Busemann functions in asymptotically harmonic Finsler manifolds, to appear in Journal of Mathematical Physics, Analysis, Geometry.


\bibitem{SZ} Shen Z., Lectures on Finsler geometry, World Scientific Publishing Co., Singapore, 2001.
\bibitem{YZ}  Shen Y.,  Shen Z., Introduction to modern Finsler geometry, Higher Education Press, Beijing; World Scientific Publishing Co., Singapore, 2016.





\bibitem{KSH}  Shiohama K., Riemannian and Finsler geometry in the large, Recent Adv. Math. RMS-Lecture Note Series \textbf{21} (2015), 163–179.
\bibitem{KSHI} Shiohama  K., Tiwari B., The global study of Riemannian-Finsler geometry, Geometry in history, 581–621, Springer, Cham, 2019.
\bibitem{W} Wu  B. Y., \emph{Volume form and its applications in Finsler geometry}, Publ. Math. Debrecen, \textbf{78} (2011), 723-741.

\bibitem{ZWSY} Zhao W., Shen Y., A universal volume comparison theorem for Finsler manifolds and related results, Canad. J. Math. \textbf{65} (2013),  1401–1435.

\end{thebibliography}
\end{document}